\newtheorem{theorem}{Theorem}[section]
\newtheorem*{theorem*}{Theorem}
\newtheorem{lemma}[theorem]{Lemma}
\newtheorem{proposition}[theorem]{Proposition}
\newtheorem{corollary}[theorem]{Corollary}
\newtheorem*{conjecture*}{Conjecture}
\newtheorem{example}[theorem]{Example}
\newtheorem{remark}[theorem]{Remark}
\newtheorem{definition}[theorem]{Definition}
\newcommand{\opname}[1]{\operatorname{\mathsf{#1}}}
\newcommand{\Mod}{\opname{Mod}\nolimits}
\newcommand{\add}{\opname{add}\nolimits}
\newcommand{\rep}{\opname{rep}\nolimits}
\newcommand{\Rep}{\opname{Rep}\nolimits}
\newcommand{\im}{\opname{im}\nolimits}
\renewcommand{\ker}{\opname{ker}\nolimits}
\renewcommand{\P}{\mathbb{P}}
\newcommand{\ra}{\rightarrow}
\newcommand{\sgn}{\opname{sgn}}
\newcommand{\Hom}{\opname{Hom}}
\newcommand{\RHom}{\opname{RHom}}
\newcommand{\Ext}{\opname{Ext}}
\newcommand{\End}{\opname{End}}
\newcommand{\ten}{\otimes}
\newcommand{\lten}{\overset{\boldmath{L}}{\ten}}
\newcommand{\Tor}{\opname{Tor}}
\begin{document}

\title[Tilting theory of generalized preprojective algebras]{Tilting modules and support $\tau$-tilting modules over preprojective algebras associated with symmetrizable  Cartan matrices}\thanks{Partially  supported by  the China Scholarship Council and  the National Natural Science Foundation of China (No. 11471224) }

\author{Changjian Fu}
\address{Changjian Fu\\Department of Mathematics\\SiChuan University\\610064 Chengdu\\P.R.China}
\email{changjianfu@scu.edu.cn}
\author{Shengfei Geng}
\address{Shengfei Geng\\Department of Mathematics\\SiChuan University\\610064 Chengdu\\P.R.China}
\email{genshengfei@scu.edu.cn}
\subjclass[2010]{16G10, 16G20}
\keywords{symmetrizable  Cartan matrix, preprojective algebras, locally free modules, generalized simple modules, cofinite tilting ideals, support $\tau$-tilting modules.}
\maketitle

\begin{abstract}
For any given  symmetrizable  Cartan matrix $C$ with a symmetrizer  $D$, Gei\ss~ et al. (2016) introduced a generalized preprojective algebra $\Pi(C, D)$.  We study tilting modules and support $\tau$-tilting modules for  the generalized preprojective algebra $\Pi(C, D)$ and  show that there is a bijection between the set of all cofinite tilting ideals  of  $\Pi(C,D)$ and the corresponding Weyl group $W(C)$ provided that $C$ has no component  of Dynkin type. When $C$ is of Dynkin type,  we also establish a bijection between the set of all basic support $\tau$-tilting $\Pi(C,D)$-modules and  the corresponding Weyl group $W(C)$.  These results  generalize the classification results of Buan et al. (Compos. Math. 145(4), 1035-1079, 2009) and Mizuno 
(Math. Zeit. 277(3), 665-690, 2014) over classical preprojective algebras.
\end{abstract}

\section{Introduction}
Preprojective algebra associated to a quiver was invented by Gelfand and Ponomarev~\cite{GP}. Since its appearance, it has been studied extensively due to its relevance to various different parts of mathematics, see ~\cite{CB, L, N, R, GLS05} for instance. In particular, it has played a key role in Lusztig's construction~\cite{L} of semicanonical bases for the enveloping algebra $U(\mathfrak{n})$, where $\mathfrak{n}$ is a maximal nilpotent subalgebra of a complex symmetric Kac-Moody Lie algbra. The representation theory of preprojective algebras was also the foundation of Nakajima's construction of quiver varieties~\cite{N}.  Building on the work of Buan et al.~\cite{BIRS} and Gei\ss~et al.~\cite{GLS06,GLS11}, preprojective algebras also provided us a large class of  $2$-Calabi-Yau categories which lead to categorifications of certain important cluster algebras.

Recently, Gei\ss~et al.~\cite{GLS14} introduced a class of Iwanaga-Gorenstein algebras  via quivers with relations for any symmetrizable Cartan matrices with symmetrizers, which generalizes the path algebras of quivers associated with symmetric Cartan matrices. They also introduced the corresponding generalized preprojective algebras.  This new class of preprojective algebras reduces to the classical one provided that the Cartan matrix is symmetric and the symmetrizer is the identity matrix. Surprisingly, the generalized preprojective algebras still share many  properties with the classical one.  Since the classical preprojective algebras have many important applications in  different fields of mathematics, it is an interesting question  to find out which results or constructions for classical preprojective algebras can be generalized to the general setting. For example, if one can generalize the constructions of~\cite{BIRS, GLS11} to the new preprojective algebras, then one may obtain new categorifications for certain skew-symmetrizable cluster algebras.  

This note gives a first attempt to generalize certain classification results in tilting theory of preprojective algebras to this new setting.
For a given algebra, a basic question in tilting theory  is to  classify all the tilting modules or support $\tau$-tilting modules. For the classical preprojective algebras, the classification has been obtained by Buan~et al.~\cite{BIRS} for preprojective algebras of non-Dynkin type ({\it cf.} also ~\cite{IR}) and by Mizuno~\cite{M} for preprojective algebras of Dynkin type. We show that the classification results  of ~\cite{BIRS} and ~\cite{M} over classical preprojective algebras do generalize to  the generalized 
preprojective algebras  in the sense of ~\cite{GLS14}. More precisely, let $C$ be a  symmetrizable  Cartan matrix and $D$  a symmetrizer of $C$. Denoted by $\Pi=\Pi(C,D)$ the preprojective algebra associated to $(C, D)$. Denote by $\overline{Q}
$ the  quiver of $\Pi$ with
vertice set $Q_0 := \{1, \cdots , n\}$. Let $e_1, \cdots, e_n$  be the idempotents associated to the vertices. For each $i\in Q_0,$ denote by $I_i$ the two-sided ideal $\Pi(1-e_i)\Pi$ and
 $\langle I_1,I_2,\cdots,I_n\rangle$  the ideal semigroup generated by $I_1,I_2,\cdots,I_n.$  Denote by $W(C)$  the Weyl group of the symmetrizable Cartan matrix $C$.
We have the following main results.
\begin{theorem}
Let $C$ be a symmetrizable Cartan matrix with a symmetrizer $D$ and $\Pi=\Pi(C,D)$ the associated preprojective algebra. Assume that  $C$ has no component of Dynkin type,  then we have  bijections between the following sets:
\begin{enumerate}
\item[$(1)$] the set of all  cofinite tilting $\Pi$-ideals;
\item[$(2)$] the ideal semigroup $ \langle I_1,I_2,\cdots,I_n\rangle$;
\item[$(3)$] the Weyl group $W(C)$.
\end{enumerate}
\end{theorem}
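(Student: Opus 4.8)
The plan is to prove the two bijections $(3)\leftrightarrow(2)$ and $(2)\leftrightarrow(1)$ separately: the first is combinatorial, the second homological. For $w\in W(C)$ with reduced expression $w=s_{i_1}\cdots s_{i_\ell}$, I would set $I_w:=I_{i_1}\cdots I_{i_\ell}$. The first task is to show this is well defined, i.e.\ independent of the chosen reduced word. By Tits' solution of the word problem in Coxeter groups it suffices to check the defining relations at the level of ideals: idempotency $I_i^2=I_i$, and the braid relations $I_iI_jI_i\cdots=I_jI_iI_j\cdots$ with $m_{ij}$ factors on each side, where $m_{ij}\in\{2,3,4,6\}$ is the order of $s_is_j$ corresponding to $C_{ij}C_{ji}\in\{0,1,2,3\}$. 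Idempotency is immediate once one identifies $\Pi/I_i$ with the generalized simple module $E_i$ concentrated at vertex $i$, a finite-dimensional locally free module of rank $1$ at $i$. The braid relations I would verify by analysing the locally free structure of the quotients $\Pi/(I_iI_j\cdots)$; this is where the symmetrizer $D$ genuinely complicates the symmetric-case argument, since $E_i$ is no longer simple. Granting idempotency and the braid relations, every product $I_{j_1}\cdots I_{j_k}$ collapses to $I_w$ for the Demazure product $w$ of $s_{j_1}\cdots s_{j_k}$, so $\langle I_1,\dots,I_n\rangle=\{I_w:w\in W(C)\}$; injectivity of $w\mapsto I_w$ I would deduce from the weak-order combinatorics of $W(C)$ together with the functors $I_i\otimes_\Pi-$ acting as reflection functors, which gives $(3)\leftrightarrow(2)$.

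For $(2)\subseteq(1)$, I would first show each $I_i$ is a cofinite tilting ideal: it is cofinite because $E_i=\Pi/I_i$ is finite-dimensional, and tilting of projective dimension $\le 1$ by resolving $E_i$ and using the Iwanaga--Gorenstein structure of $\Pi$ to obtain the $\Ext^1$-vanishing and a coresolution of $\Pi$ in $\add I_i$. The key propagation lemma is then that $I_iT$ is again a cofinite tilting ideal whenever $T$ is; iterating along a reduced word shows every $I_w$ is a cofinite tilting ideal, whence $(2)\subseteq(1)$.

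The remaining and hardest step is surjectivity, $(1)\subseteq(2)$: every cofinite tilting ideal $T$ equals some $I_w$. I would argue by induction on $\dim_k\Pi/T$. If $T=\Pi$ then $w=e$; otherwise the finite-dimensional module $\Pi/T$ surjects onto some $E_i$, so $T\subseteq I_i$, and I would show $T$ factors as $T=I_iT'$ for a cofinite tilting ideal $T'$ with $\dim_k\Pi/T'<\dim_k\Pi/T$, completing the induction via $I_w=I_iI_{w'}$ with $\ell(w)=\ell(w')+1$. The hard part will be exactly this factorization: producing the \emph{tilting} complement $T'$ and showing the codimension drops by a single Weyl-length step. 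In the symmetric case this is governed by the self-injectivity and Calabi--Yau duality of the classical preprojective algebra; here I expect to substitute the locally free calculus and the Gorenstein duality for $\Pi(C,D)$, verifying that $T\mapsto I_iT$ acts as a reflection on the poset of cofinite tilting ideals. Once the braid relations and this factorization are established, the assignment $w\mapsto I_w$ and the identification of $\langle I_1,\dots,I_n\rangle$ with the set of cofinite tilting ideals together furnish all three bijections.
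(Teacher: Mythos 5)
Your overall architecture coincides with the paper's: the paper proves your $(3)\leftrightarrow(2)$ step exactly as you plan it (well-definedness of $w\mapsto I_w$ via idempotency and the braid relations, the latter checked in the rank-two quotients $\Pi/\Pi(1-e_i-e_j)\Pi$ by explicit computation in three small preprojective algebras; injectivity via the action of $-\lten_{\Pi}I_i$ on $K_0(\mathcal{K}^b(\proj\Pi))$, identified with the faithful geometric representation of $W$), and it proves your $(2)\subseteq(1)$ step via $\opname{proj.dim}I_i\leq 1$, $\End_{\Pi}(I_i)\cong\Pi$, and the propagation statement that $TI_i$ is again tilting. Your observation that arbitrary products collapse to $I_w$ for the Demazure product is a formally correct $0$-Hecke-monoid consequence of the relations, marginally slicker than the paper's minimal-length-word argument. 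So the plan is sound in outline, but the step you yourself flag as ``the hard part'' is genuinely missing, and it is the crux of the whole theorem.

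Concretely, the gap is the factorization in the surjectivity step $(1)\subseteq(2)$. The paper's mechanism is: given a cofinite partial tilting right ideal $T\neq\Pi$, choose a generalized simple \emph{submodule} $E_i\subseteq\Pi/T$ (not a quotient --- note that from a right-module surjection $\Pi/T\twoheadrightarrow E_i$ you cannot conclude $T\subseteq I_i$ as you assert, since the kernel of such a surjection is only a right ideal, not $I_i$); then $\Ext^1_{\Pi}(E_i,T)\cong\Hom_{\Pi}(E_i,\Pi/T)\neq 0$ forces $T\otimes_{\Pi}\mathbb{D}E_i=0$ via the dichotomy lemma (at least one of $T\otimes_{\Pi}E_i=0$ and $\Tor^{\Pi}_1(T,E_i)=0$ holds, by minimality of a projective presentation) combined with the $2$-Calabi--Yau duality for locally free modules; one then sets $U=\RHom_{\Pi}(I_i,T)$ and invokes the Rickard--Yekutieli theory of two-sided tilting complexes ($I_i$ is two-sided tilting precisely because $\End_{\Pi}(I_i)\cong\Pi$) to see that $U\cong\Hom_{\Pi}(I_i,T)$ is a cofinite \emph{partial} tilting right ideal with $U/T\cong E_i^{l}$, whence $T=UI_i$ since $E_iI_i=0$ and $TI_i=T$. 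Two consequences your sketch does not account for: (i) the complement $U$ is a priori only a partial tilting one-sided ideal, so the induction must be run over cofinite partial tilting one-sided ideals (the paper shows these all lie in $\langle I_1,\dots,I_n\rangle$), not over cofinite tilting two-sided ideals as you propose --- your induction as stated does not close; (ii) the duality input in the non-Dynkin case is the $2$-Calabi--Yau property of $\Ext^1$ and $\Ext^2$ on locally free modules, not the Iwanaga--Gorenstein structure, which pertains to the algebras $H(C,D,\Omega)$ rather than to the infinite-dimensional $\Pi$. Finally, your braid-relation check is also only a plan; the paper discharges it by direct calculation with the relations $\varepsilon_i^{f_{ji}}a^{(g)}_{ij}=a^{(g)}_{ij}\varepsilon_j^{f_{ij}}$ and the mesh relations in the rank-two quotients, including the $c_{ij}c_{ji}=3$ case, and this is exactly where the symmetrizer enters; your appeal to ``the locally free structure of the quotients'' would have to be replaced by such computations.
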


\begin{theorem}
Let $C$ be a symmetrizable Cartan matrix of Dynkin type with a symmetrizer $D$ and $\Pi=\Pi(C,D)$ the associated preprojective algebra,  then we have  bijections between the following sets:
\begin{itemize}
\item[$(1)$] the set of all basic support $\tau$-tilting $\Pi$-modules;
\item[$(2)$] the ideal semigroup $ \langle I_1,I_2,\cdots,I_n\rangle$;
\item[$(3)$] the Weyl group $W(C)$.
\end{itemize}
\end{theorem}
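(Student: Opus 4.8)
The plan is to follow the strategy that Mizuno used in \cite{M} for classical preprojective algebras, adapting each step to the locally free module theory of $\Pi = \Pi(C,D)$ developed by Gei\ss~et al.~\cite{GLS14}. The central object is the map $\Phi \colon W(C) \to \{\text{basic support }\tau\text{-tilting }\Pi\text{-modules}\}$ that sends $w$ with reduced expression $w = s_{i_1}\cdots s_{i_\ell}$ to the two-sided ideal $I_w := I_{i_1}I_{i_2}\cdots I_{i_\ell}$, regarded as a left $\Pi$-module. Two preliminary facts frame everything: in Dynkin type $\Pi$ is finite-dimensional (indeed Iwanaga--Gorenstein by \cite{GLS14}), so the support $\tau$-tilting theory of Adachi, Iyama and Reiten applies and $W(C)$ is finite with a longest element $w_0$; and one should establish early that $I_{w_0} = 0$, equivalently that the ideal semigroup $\langle I_1,\dots,I_n\rangle$ is finite, so that $\Phi(e) = \Pi$ and $\Phi(w_0) = 0$ are the maximal and minimal support $\tau$-tilting modules.

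First I would prove the bijection $(2)\leftrightarrow(3)$, that $w \mapsto I_w$ is a well-defined bijection from $W(C)$ onto $\langle I_1,\dots,I_n\rangle$. Well-definedness reduces to checking that the ideals $I_i$ satisfy the braid relations of $W(C)$ and that a product along a reduced word is independent of the chosen word; this is the generalized-preprojective analogue of the identities underlying the first of the above theorems, and should follow from the reflection-functor description of $I_i \otimes_\Pi -$ together with the $E_i$-filtration of $\Pi/I_i$. Surjectivity is immediate from the definition. For injectivity I would compute the rank vector of the locally free module $\Pi/I_w$ and show that it records the inversion set, hence the length and ultimately $w$ itself, exactly as in the classical case; this simultaneously shows that distinct $w$ yield non-isomorphic modules $I_w$.

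Next, for $(2)\leftrightarrow(1)$, the substantive claims are that every $I_w$ is a basic support $\tau$-tilting module and that $\Phi$ is surjective. I would prove both by an induction on $\ell(w)$ driven by mutation: the key lemma is that the support $\tau$-tilting mutation at a vertex $i$ carries $I_w$ to $I_{s_i w}$ --- realized by left multiplication $I_i I_w = I_{s_i w}$ when $\ell(s_i w) = \ell(w)+1$ --- with the relevant exchange sequence built from the generalized simple module $E_i$. Granting this, each $I_w$ is reached from the free module $\Phi(e) = \Pi$ by a sequence of mutations and hence is support $\tau$-tilting; and surjectivity follows because the support $\tau$-tilting mutation graph is connected (Adachi--Iyama--Reiten, applicable since $\Pi$ is $\tau$-tilting finite in Dynkin type) while, by the lemma, every mutation stays inside the family $\{I_w : w \in W(C)\}$. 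Composing with $(2)\leftrightarrow(3)$ then yields all three bijections.

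The main obstacle is the mutation lemma in the third step: in the classical case the simple modules are one-dimensional and the exchange triangles are transparent, whereas here the generalized simples $E_i$ are locally free of rank dictated by the symmetrizer $D$, so one must verify that the reflection functors of \cite{GLS14} still produce the minimal left (or right) approximation sequences demanded by the Adachi--Iyama--Reiten mutation, and that the rank bookkeeping matches the action of $s_i$ on $W(C)$. The subsidiary obstacle is the finiteness input $I_{w_0} = 0$ (equivalently $\tau$-tilting finiteness), which underlies both the connectedness argument for surjectivity and the counting behind injectivity, and which must be derived from the Iwanaga--Gorenstein structure of $\Pi(C,D)$ rather than from self-injectivity as in the symmetric case.
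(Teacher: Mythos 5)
Your mutation half is essentially the paper's proof. The paper shows each $I_i$ is support $\tau$-tilting by computing $\tau(e_iI_i)\cong\nu(E_{\sigma(i)})\cong E_i$, shows that for $T\in\langle I_1,\dots,I_n\rangle$ the module $I_iT$ is the left mutation $\mu^-_{e_iT}(T)$ by tensoring the presentation $e_i\Pi\to\bigoplus_{j\in\overline{\Omega}(i,-)}(e_j\Pi)^{|c_{ji}|}\to e_iI_i\to 0$ with $T$ and checking this is a minimal left $\add((1-e_i)T)$-approximation, and gets surjectivity because $\{I_w : w\in W(C)\}$ is a finite set, closed under mutation and containing $\Pi$, hence a finite connected component of $\mathcal{H}(\opname{s\tau-tilt}\Pi)$, which by \cite[Cor.~2.38]{AIR} must be the whole quiver. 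Two corrections to your framing of this step. First, Adachi--Iyama--Reiten do not prove the mutation quiver is connected, and $\tau$-tilting finiteness cannot be taken as a hypothesis (that would be circular): the finiteness of the component comes for free from $|W(C)|<\infty$, and $I_{w_0}=0$ is a byproduct of the bijection, not a needed input. Second, and more concretely, your closing plan to work ``from the Iwanaga--Gorenstein structure rather than from self-injectivity'' rests on a false premise: by \cite[Cor.~12.7]{GLS14} (Lemma~\ref{selfinjective} in the paper), $\Pi(C,D)$ \emph{is} self-injective in Dynkin type for every symmetrizer, and the paper uses this at exactly the steps you flag as the main obstacle --- exactness of the Nakayama functor in the $\tau$-computation, the four-term sequence $0\to E_{\sigma(i)}\to e_i\Pi\to\bigoplus_{j}(e_j\Pi)^{|c_{ji}|}\to e_i\Pi\to E_i\to 0$ of Lemma~\ref{lem of EI resolution}, indecomposability of $e_iI$ via the simple socle of $e_i\Pi$, and minimality of the approximations. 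Pursued literally, your plan stalls at the mutation lemma; the repair is simply to invoke self-injectivity.

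On the bijection $(2)\leftrightarrow(3)$ you diverge from the paper and leave both hard points open. Well-definedness requires the braid relations $I_iI_jI_i\cdots=I_jI_iI_j\cdots$ for $c_{ij}c_{ji}=1,2,3$; the paper proves these by passing to the rank-two quotients $\Pi/\Pi(1-e_i-e_j)\Pi$ and computing directly with the defining relations (the $c_{ij}c_{ji}=2,3$ cases, with nontrivial symmetrizer, have no classical counterpart), and ``should follow from the reflection-functor description'' does not discharge this computational core. For injectivity the paper lifts to the extended Dynkin preprojective algebra $\widetilde{\Pi}$, with $\Pi=\widetilde{\Pi}/\langle e_0\rangle$ and $I_w=\widetilde{I}_w/\langle e_0\rangle$, and reduces to the non-Dynkin case, where $[-\lten_{\Pi}I_i]$ acting on $K_0(\mathcal{K}^b(\proj\Pi))$ realizes the faithful geometric representation of $W(C)$. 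Your alternative --- reading $w$ off $\rank(\Pi/I_w)$ via the inversion set --- would work in principle, since the sum of the inversions of $w$ equals $\rho-w^{-1}\rho$ and determines $w$ by regularity of $\rho$, but it presupposes a layer lemma: along a reduced word the successive quotients $I_{s_{i_1}\cdots s_{i_{k-1}}}/I_{s_{i_1}\cdots s_{i_k}}$ must be locally free with rank vectors the corresponding inversion roots. That is a substantive statement, proved nowhere in the paper and only asserted ``exactly as in the classical case'' by you, so as written your injectivity argument has a genuine gap, whereas the paper's affine-lifting argument avoids it entirely.
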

Let us mention here that in contrast to the classical cases, we have to pay attention to locally free modules and generalized simple modules in this general setting. After establishing certain necessary properties for locally free modules and generalized simple modules, most of the remain arguments essentially follow the one in~\cite{BIRS, M}.
Moreover, for the preprojective algebras of Dynkin type, both the Weyl group and the support $\tau$-tilting $\Pi$-modules admit poset structures. One may adapt the argument in ~\cite{M} to show that the bijection in Theorem~$1.2$ is compatible with the poset structures.

The paper is organized as follows. In Section~\ref{S:basic-notions}, we recall basic definitions and properties of the  preprojective algebras associated with symmetrizable  Cartan matrices following~\cite{GLS14}.  In Section~\ref{S:non-Dynkin-type}, we  establish the bijection between the set of all cofinite tilting ideals of the preprojective algebras $\Pi(C,D)$ and  the ideal semigroup  $\langle I_1,I_2,\cdots,I_n\rangle$ provided that $C$ has no component of Dynkin type.  Section~\ref{S:Weyl-group-and-ideal-semigroup} is devoted to  prove that there exists a bijection between the Weyl group $W(C)$ and the ideal semigroup $\langle I_1,I_2,\cdots,I_n\rangle$ of $\Pi(C,D)$ for any symmetrizable Cartan matrix  $C$ with a symmetrizer $D$. In Section~\ref{S:Dynkin-type}, we  study the support $\tau$-tilting $\Pi(C, D)$-modules  for Dynkin type symmetrizable Cartan matrix $C$ and establish the bijection between the set of all support $\tau$-tilting  $\Pi(C,D)$-modules   and the ideal semigroup  $\langle I_1,I_2,\cdots,I_n\rangle$. 

\noindent{\bf Notation}. Let $\Lambda$ be an algebra over a field $K$. Denote by $\mathbb{D}=\Hom_K(-, K)$ the usual duality.
By a module, we mean a right module unless stated otherwise. 
For a $\Lambda$-module $M$, denote by $|M|$ the number of non-isomorphic indecomposable direct summands of $M$. For an integer $l$, denote by $M^l$ the direct sum of $l$ copies of $M$.  Denote by $\operatorname{add} M$ the subcategory of $\Lambda$-modules consisting of modules which are finite direct sum of direct summands of $M$.

\section{ Preliminary}~\label{S:basic-notions}

Following~\cite{GLS14}, we recall basic definitions and properties of the  preprojective algebras associated with symmetrizable generalized Cartan matrices.  In this section, we denote by  $K$  an arbitrary field.
\subsection{Symmetrizable 
Cartan matrix and orientation}
\begin{definition}A matrix $C = (c_{ij}) \in M_n(\mathbb{Z})$ is a {\it symmetrizable (generalized)
Cartan matrix} if the following conditions are satisfied:

$(C_1)$~ $c_{ii} = 2$ for all $i$;

$(C_2)$~ $c_{ij} \leq 0$ for all $i \neq j$;

$(C_3)$ ~$c_{ij} \neq 0$ if and only if $c_{ji} \neq 0$.

$(C_4)$ ~There is a diagonal integer matrix $D = \opname{diag}(c_1,\cdots , c_n)$ with $c_i \geq 1$ for all $i$ such
that $DC$ is symmetric.

The matrix $D$ appearing in $(C_4)$ is called a {\it symmetrizer} of $C$ and is 
{\it minimal} if $c_1 +\cdots + c_n$ is minimal.
Denote
$g_{ij} := | \operatorname{gcd}(c_{ij} , c_{ji})|, f_{ij} := |c_{ij} |/g_{ij} .$
\end{definition}
An {\it orientation} of $C$ is a subset $\Omega$ of
$\{1, 2,\cdots , n\} \times \{1, 2, \cdots , n\} $ such that the followings hold:

$(A_1)$~ $\{(i, j), (j, i)\} \cap \Omega
 \neq \emptyset $ if and only if $c_{ij} < 0$;

$(A_2)$~ For each sequence $((i_1, i_2), (i_2, i_3), \cdots , (i_t, i_{t+1}))$ with $t \geq 1$ and $(i_s, i_{s+1}) \in \Omega$
 for
all $1 \leq s \leq t$, we have $i_1 \neq i_{t+1}$.

Given an orientation $\Omega$
 of $C$, let $Q := Q(C,\Omega
) := (Q_0,Q_1, s, t)$ be the quiver with the set of
vertices $Q_0 := \{1, \cdots , n\}$ and with the set of arrows
$Q_1 := \{a^{(g)}_{ij} : j \to i | (i, j) \in \Omega, 1 \leq g \leq g_{ij}\} \cup\{\varepsilon_i : i \to i | 1 \leq i \leq n\}.$
Thus we have $s(a^{(g)}_{ij} ) = j$ and $t(a^{(g)}_{ij} ) = i$ and $s(\varepsilon_i) = t(\varepsilon_i) = i$, where $s(a)$ and $t(a)$
denote the starting and terminal vertex of an arrow $a$, respectively. If $g_{ij} = 1$, we also
write $a_{ij}$ instead of $a^{(1)}_{ij} $. 

The quiver $Q$ is called a {\it quiver of  type $C$} and we say the generalized Cartan matrix $C$ is {\it connected} if $Q$ is connected.
Denote by $Q^o$ the quiver obtained from $Q$ by deleting all the loops of $Q$. By the condition  $(A_2)$, we know that $Q^o$ is an acyclic quiver. 

\subsection{Preprojective algebras associated to symmetrizable Cartan matrices}~\label{s:preprojectivealgebra}

Let $C$  be a symmetrizable Cartan matrix with a symmetrizer $D$. Given an orientation  $\Omega$ of $C$, 
the opposite orientation of  $\Omega$
 is defined as
$\Omega^{op} :=\{(j, i) | (i, j) \in\Omega
\}$.  Denote $\overline{\Omega}=\Omega\cup\Omega^{op}$.
For $(i, j)\in \overline{\Omega}$, 
set
\[\sgn (i,j)=\begin{cases}+1& if\ (i, j) \in \Omega; \\-1&if\ (i, j) \in \Omega^{op}.\end{cases}\]
Let $Q$ be the quiver defined by  $(C,\Omega)$. Let $\overline{Q} = \overline{Q}(C,\Omega)$ be the quiver obtained from $Q$ by adding a new arrow
$a^{(g)}_{ji} : i \to j$ for each arrow $a^{(g)}_{ij} : j \to i \ of\ Q^{\circ}.$

Let $\Omega(i,-)$ be the set $\{j|(i,j)\in\Omega\}.$ Similar one can define ${\Omega}(-,i)$, $\overline{\Omega}(-,i)$ and $\overline{\Omega}(i,-)$.
Now we can define an  algebra
$H := H(C,D,\Omega
) := KQ/I,$ 
where $KQ$ is the path algebra of $Q$, and $I$ is the ideal of $KQ$ defined by the following
relations:

$(H_1)$ For each $i\in Q_0$,  we have the nilpotency relation
$\varepsilon^{c_i}_i = 0.$

$(H_2)$ For each $(i, j)\in \Omega$
 and each $1 \leq g \leq g_{ij}$,  we have the commutativity relation
$$\varepsilon_i^{f_{ji}}a^{(g)}_{ij} =a^{(g)}_{ij}\varepsilon_j^{f_{ij}}.$$
\begin{definition}
The {\it  preprojective algebra} $\Pi:= \Pi(C,D,\overline{\Omega})$ associated to the symmetrizable Cartan matrix $C$  is the quotient algebra $K\overline{Q}/\overline{I}$ of the path algebra $K\overline{Q}$ by the ideal
 $\overline{I}$ generated by the following relations:

$(P_1)$ For each $i\in Q_0$, we have the nilpotency relation
$\varepsilon_i^{c_i} = 0.$

$(P_2)$ For each $(i, j) \in\overline{\Omega}$
 and each $1 \leq g \leq g_{ij}$, we have the commutativity relation
$$\varepsilon_i^{f_{ji}}a^{(g)}_{ij} =a^{(g)}_{ij}\varepsilon_j^{f_{ij}}.$$

$(P_3)$ For each $i$, we have the mesh relation
\[\sum_{j\in\overline{\Omega}(-,i)}\sum_{g=1}^{g_{ij}}\sum_{f=0}^{f_{ji}-1}\sgn (i,j)\varepsilon_i^{f}a^{(g)}_{ij} a^{(g)}_{ji}\varepsilon_i^{f_{ji}-1-f}=0.\]
\end{definition}

When the generalized Cartan matrix $C$ is symmetric and the symmetrizer $D$ is the identity matrix, this definition reduces to the one of the classical preprojective algebras for acyclic quivers. As the classical preprojective algebras, the name is also justified by the fact that as an $H$-module, 
 $\Pi\cong\bigoplus_{j\in Q_0}\bigoplus_{k\in \mathbb{N}}\tau^{-k}(P_j)$,
 where $P_j$ is the projective $H$-module corresponding the vertex $j$ and $\tau$ is the Auslander-Reiten translation~({\it cf.}~\cite[Thm. $1.7$]{GLS14}).
 It is also easy to see that if the symmetrizer $D$ is not the identity matrix,  the quiver $\overline{Q}$  has loops.

By the definition of  $\Pi$, it is clear that $\Pi$ does not depend on the orientation of $C$. Hence in the following, we simply write $\Pi=\Pi(C,D)$.

\begin{example}\label{eg1}
Let $C=\left(\begin{array}{cc}2&-1\\-1&2 \end{array}\right)$, $D=\opname{diag}(2,2)$, $\Omega={(1,2)}$. The preprojective algebra $\Pi=\Pi(C,D)$ is given by the quiver
\[\xymatrix{
\ar@(ul,dl)_{\varepsilon_1}1\ar[rr]<1mm>^{a_{21}}&&\ar@(ur,dr)^{\varepsilon_2
}2\ar[ll]^{a_{12}}}
\]
with relations $\varepsilon_1^{2}=0, \varepsilon_2^{2}=0, \varepsilon_1a_{12}=a_{12}\varepsilon_2,\varepsilon_2a_{21}=a_{21}\varepsilon_1, a_{12}a_{21}=a_{21}a_{12}=0.$
\end{example}

\begin{example}\label{eg2}
Let $C=\left(\begin{array}{cc}2&-1\\-2&2 \end{array}\right)$, $D=\opname{diag}(2,1)$, $\Omega={(1,2)}$. The preprojective algebra $\Pi=\Pi(C,D)$ is given by the quiver
\[\xymatrix{
\ar@(ul,dl)_{\varepsilon_1}1\ar[rr]<1mm>^{a_{21}}&&\ar@(ur,dr)^{\varepsilon_2}2\ar[ll]^{a_{12}}
}
\]
with relations $\varepsilon_1^{2}=0=\varepsilon_2, \varepsilon_1^2a_{12}=a_{12}\varepsilon_2,\varepsilon_2a_{21}=a_{21}\varepsilon_1^2, a_{12}a_{21}\varepsilon_1+\varepsilon_1a_{12}a_{21}=0, a_{21}a_{12}=0$.
Since $\varepsilon_1^{2}=0=\varepsilon_2$, we can delete the loop $\varepsilon_2$ and then  $\Pi(C,D)$ is given by the quiver
\[\xymatrix{
\ar@(ul,dl)_{\varepsilon_1}1\ar[rr]<1mm>^{a_{21}}&&\ar[ll]^{a_{12}}2
}
\]
with relations $\varepsilon_1^{2}=0,  a_{12}a_{21}\varepsilon_1+\varepsilon_1a_{12}a_{21}=0, a_{21}a_{12}=0$.
\end{example}

\subsection{The quadratic form}
Given a symmetrizable Cartan matrix $C$  with  a symmetrizer $D$,
 the quadratic form $q_C : \mathbb{Z}_n \to \mathbb{Z}$ of $C$ is defined as follows
\[q_C :=\sum\limits_{i=1}^nc_iX_i^2-\sum\limits_{i<j}c_i|c_{ij}|X_iX_j.\]
The symmetrizable Cartan matrix $C$ is of Dynkin  type if and only if $q_C$ is positive definite. 
It is well-known that  connected symmetrizable Cartan matrices  of Dynkin type can be classified by the Dynkin diagrams of type $A_n,B_n,C_n,D_n,E_6,E_7,E_8,F_4,G_2.$

\begin{lemma}\cite[Corollary. 12.7]{GLS14}\label{selfinjective}
Suppose that the Cartan matrix $C$ is of Dynkin type. Then the preprojective algebra $\Pi(C, D)$
is a selfinjective algebra for any symmetrizer $D$ of $C$.
\end{lemma}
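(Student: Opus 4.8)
The plan is to show that $\Pi(C,D)$ is selfinjective whenever $C$ is of Dynkin type, and the natural route is to exploit the structural description of $\Pi$ already recalled in the excerpt, namely that as an $H$-module one has $\Pi \cong \bigoplus_{j\in Q_0}\bigoplus_{k\in\N}\tau^{-k}(P_j)$, where $H = H(C,D,\Omega)$ is the associated Iwanaga-Gorenstein algebra of \cite{GLS14} and $\tau$ is its Auslander--Reiten translation. The first step is to observe that in the Dynkin case the Cartan matrix's quadratic form $q_C$ is positive definite, which forces $H$ to be representation-finite; in particular every indecomposable $H$-module is of the form $\tau^{-k}(P_j)$ for finitely many $k$, so the above sum is finite and $\Pi$ is a finite-dimensional algebra. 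This finiteness is exactly what makes a selfinjectivity statement tractable.

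Next I would pin down selfinjectivity through the Nakayama permutation. The standard criterion is that a finite-dimensional algebra $\Lambda$ is selfinjective precisely when the indecomposable projective modules coincide with the indecomposable injective modules, equivalently when the projective $\Pi$-modules $e_i\Pi$ are all injective. The most efficient way to verify this here is to produce a Nakayama-type automorphism: I would look for a permutation $\nu$ of $Q_0$ together with an isomorphism of $\Pi$-modules $\co_i \coloneqq e_i\Pi \cong \mathbb{D}(\Pi e_{\nu(i)})$, matching each indecomposable projective with an indecomposable injective. In the Dynkin setting this permutation should be governed by the action of the longest element $w_0$ of the Weyl group $W(C)$ on the simple roots (equivalently, by $-w_0$ acting on $Q_0$), exactly as in the classical preprojective algebra case; the role of the symmetrizer $D$ is only to rescale the dimensions of the loop-generated parts and does not disturb the underlying symmetry.

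Concretely, the key computation is to identify the socle of each indecomposable projective $\Pi$-module $e_i\Pi$ with a single generalized simple module and to check that this gives a bijection $i \mapsto \nu(i)$ between tops and socles; together with the self-duality of the relations $(P_1)$--$(P_3)$ under reversing all arrows (which sends $\Omega$ to $\Omega^{op}$ and is compatible with the $\sgn$ convention and the mesh relation), this yields a duality $\Pi \cong \Pi^{op}$ compatible with the grading and hence an isomorphism $e_i\Pi \cong \mathbb{D}(\Pi e_{\nu(i)})$ for a suitable $\nu$. The symmetry of the mesh relation $(P_3)$ under the orientation-reversal is precisely what guarantees that the top of $e_i\Pi$ and its socle are related by a fixed permutation rather than lying in incomparable positions.

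The main obstacle I anticipate is the careful handling of the loops $\varepsilon_i$ and the resulting non-semisimplicity at each vertex: unlike the classical case where each vertex contributes a one-dimensional simple, here the generalized simple module $E_i$ attached to vertex $i$ has $K$-dimension $c_i$ and is itself a local algebra $K[\varepsilon_i]/(\varepsilon_i^{c_i})$-module, so the socle computation must track the $\varepsilon_i$-action and confirm that the socle of $e_i\Pi$ is again a full generalized simple rather than a proper submodule of one. Verifying that the mesh relation $(P_3)$ interacts correctly with these $c_i$-dimensional socles—so that no projective develops a socle spread across several vertices—is the delicate point, but once the self-duality $\Pi \cong \Pi^{op}$ is established at the level of the quiver with relations, selfinjectivity follows formally. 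Since the excerpt permits me to cite \cite[Cor. 12.7]{GLS14} directly, I would in practice defer the most technical socle bookkeeping to that reference and present the duality argument as the conceptual explanation.
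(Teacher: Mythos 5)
This lemma is not proved in the paper at all: it is quoted verbatim from \cite[Corollary~12.7]{GLS14}, so your closing move---deferring the technical content to that reference---is in effect exactly what the paper does, and as a citation the lemma needs no further justification. Judged as a proof sketch, however, your argument has two genuine gaps. First, positive definiteness of $q_C$ does \emph{not} force $H=H(C,D,\Omega)$ to be representation-finite, and your inference ``every indecomposable $H$-module is of the form $\tau^{-k}(P_j)$'' fails: $H$ is not hereditary (it has the loops $\varepsilon_i$ with $\varepsilon_i^{c_i}=0$), most of its indecomposables are not locally free, and for suitable symmetrizers $H$ is representation-infinite even in Dynkin type. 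Concretely, for $C$ of type $A_2$ with $D=\operatorname{diag}(d,d)$, the category $\mod H$ is the morphism category of $K[\varepsilon]/(\varepsilon^d)$-modules, which contains the submodule category of $K[\varepsilon]/(\varepsilon^d)$ and is representation-infinite for $d\geq 6$ (Birkhoff's problem). Since the lemma must hold for \emph{every} symmetrizer $D$, this breaks your first step. What is actually true, and what Gei\ss--Leclerc--Schr\"oer prove and use, is the weaker statement that in the Dynkin case there are only finitely many indecomposable $\tau$-locally free $H$-modules, equivalently $\tau^{-k}(P_j)=0$ for $k\gg 0$; that alone makes $\Pi\cong\bigoplus_{j,k}\tau^{-k}(P_j)$ finite-dimensional.

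Second, and more seriously, the step ``the self-duality of the relations yields $\Pi\cong\Pi^{op}$ and hence $e_i\Pi\cong\mathbb{D}(\Pi e_{\nu(i)})$'' is a non sequitur. An isomorphism $\Pi\cong\Pi^{op}$ is an anti-automorphism and implies nothing about injectivity of the projectives: the path algebra of $1\to 2$ is isomorphic to its opposite but is not selfinjective. What selfinjectivity requires is $\Pi\cong\mathbb{D}\Pi$ as one-sided modules (a Frobenius structure, e.g.\ via a nondegenerate associative bilinear form, or an honest computation showing $\operatorname{soc}(e_i\Pi)$ is simple and the map top-to-socle is a permutation)---and that is precisely the content to be proved, which your sketch acknowledges but replaces by no mechanism. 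Note also that within this paper's own logical order the Nakayama permutation $\sigma$ and the identities $\nu E_{\sigma(i)}\cong E_i$ and $c_i=c_{\sigma(i)}$ (Proposition~\ref{ci=csigmai}) are \emph{deduced from} selfinjectivity, so an argument presupposing a well-behaved Nakayama permutation governed by $-w_0$ would be circular if carried out in the framework of this paper; it can only be made honest by doing the socle analysis directly in $K\overline{Q}/\overline{I}$, which is exactly the work done in \cite{GLS14}.
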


\subsection{Locally free modules}
Let $e_1, \cdots, e_n$ be the idempotents  corresponding
to the vertices of  $\overline{Q}$. Let $H_i$ be the
truncated polynomial ring $K[\varepsilon_i]/(\varepsilon^{c_i}_i )$. 
A  $\Pi$-module $M$ is called {\it locally free}  if $Me_i$ is  a free $H_i$-module for each $i$. Similarly, we can define locally free $H$-modules and we refer to~\cite{GLS14} for the precisely definition.
It was shown in~\cite{GLS14} that the projective  $H$-modules and the projective $\Pi$-module are locally free.

Let $\Rep_{l.f.}(\Pi)$ be  the category
of all locally free  $\Pi$-modules and  $\rep_{l.f.}(\Pi)$ the category
of all finitely generated locally free $\Pi$-modules.  
The proof of Lemma $3.8$ in~\cite{GLS14} also implies the following.
\begin{lemma}\label{closed under extension}
 $\Rep_{l.f.}(\Pi)$ is closed under extensions, kernels of epimorphisms and cokernels of monomorphisms.
\end{lemma}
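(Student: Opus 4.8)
The plan is to reduce all three statements to elementary facts about modules over the local ring $H_i=K[\varepsilon_i]/(\varepsilon_i^{c_i})$, exactly as in the proof of Lemma~3.8 of~\cite{GLS14}. First I would record that, since $\sum_i e_i = 1$ gives $M=\bigoplus_i Me_i$ as $K$-vector spaces and every $\Pi$-homomorphism respects this decomposition, the functor $M\mapsto Me_i$ (with $Me_i$ regarded as an $H_i$-module via right multiplication by $\varepsilon_i$) is exact. Hence any short exact sequence $0\to L\to M\to N\to 0$ of $\Pi$-modules restricts, for each vertex $i$, to a short exact sequence $0\to Le_i\to Me_i\to Ne_i\to 0$ of $H_i$-modules, and by definition $M$ lies in $\Rep_{l.f.}(\Pi)$ exactly when every $Me_i$ is a free $H_i$-module.

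The key input I would isolate next is that $H_i$ is a local self-injective (Frobenius) algebra. Over such a ring every projective module is free, by Kaplansky's theorem for local rings, and every free module is injective, since $H_i$ is Artinian, hence Noetherian, and self-injective, so that an arbitrary direct sum of copies of the injective module $H_i$ remains injective; these two facts are what let the argument run for arbitrary, not necessarily finitely generated, locally free modules. From them I would extract a single ``two-out-of-three'' principle: in a short exact sequence $0\to Le_i\to Me_i\to Ne_i\to 0$ of $H_i$-modules in which at least the sub or the quotient is free, the sequence splits --- on the right when the quotient is free and hence projective, on the left when the sub is free and hence injective --- so that $Me_i\cong Le_i\oplus Ne_i$, and then whichever of the three terms is not yet assumed free is either a direct sum or a direct summand of free modules, hence projective, hence free.

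The three assertions of the lemma are then precisely the three instances of this principle, applied componentwise over all $i$: closure under extensions is the case where $Le_i$ and $Ne_i$ are free (split via projectivity of $Ne_i$, so $Me_i\cong Le_i\oplus Ne_i$ is free); closure under kernels of epimorphisms is the case where $Me_i$ and $Ne_i$ are free (again split via projectivity of $Ne_i$, so the kernel $Le_i$ is a summand of the free $Me_i$); and closure under cokernels of monomorphisms is the case where $Le_i$ and $Me_i$ are free. I expect this last case to be the only genuine subtlety: here projectivity of the quotient is unavailable, so one cannot split on the right, and one truly needs the self-injectivity of $H_i$ to split the inclusion $Le_i\hookrightarrow Me_i$ on the left and conclude that the cokernel $Ne_i$ is a direct summand of a free module, hence free. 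Everything else is a formal consequence of the exactness of $(-)e_i$, and the only remaining care is the homological bookkeeping needed to cover modules that are not finitely generated, which is exactly what the local-ring and Noetherian-ring facts above provide.
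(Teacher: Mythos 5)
Your argument is correct and is essentially the paper's proof: the paper simply invokes the proof of Lemma~3.8 of~\cite{GLS14}, which is exactly your vertexwise reduction via the exact functor $M\mapsto Me_i$ followed by splitting over the local self-injective algebra $H_i$ (free $=$ projective on the quotient side, free $=$ injective on the submodule side). Your explicit appeals to Kaplansky's theorem and to the Noetherian fact that direct sums of injectives are injective are precisely the right bookkeeping to pass from the finitely generated setting of~\cite{GLS14} to all of $\Rep_{l.f.}(\Pi)$, a point the paper leaves implicit.
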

Note that each projective $\Pi$-module is locally free,  we clearly have
\begin{corollary}\label{lem of finite proj dim}
Let   $M$ be a $\Pi$-module.  If  $\opname{proj.dim} M< \infty$, then  $M$ is locally free.
\end{corollary}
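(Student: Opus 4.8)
The plan is to argue by induction on $d := \opname{proj.dim} M$, using the two ingredients already recorded: that every projective $\Pi$-module is locally free, and that $\Rep_{l.f.}(\Pi)$ is closed under cokernels of monomorphisms (Lemma~\ref{closed under extension}). For the base case $d = 0$ the module $M$ is projective, hence locally free, and there is nothing more to do.

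For the inductive step, suppose the claim holds for every module of finite projective dimension strictly less than $d$, and assume $\opname{proj.dim} M = d \geq 1$. I would pick a short exact sequence
\[
0 \to K \to P \to M \to 0
\]
with $P$ projective (say a projective cover), so that $K = \ker(P \twoheadrightarrow M)$ is a first syzygy of $M$. By the standard dimension-shifting fact, $\opname{proj.dim} K = d - 1$, so $K$ is locally free by the induction hypothesis, while $P$ is locally free because it is projective. The sequence then exhibits $M$ as the cokernel of the monomorphism $K \hookrightarrow P$ between locally free modules, and Lemma~\ref{closed under extension} yields $M \in \Rep_{l.f.}(\Pi)$, which closes the induction.

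The argument is short precisely because the genuine content is hidden inside the closure statement, so there is no real obstacle at the level of this corollary; the only thing to check carefully is the elementary dimension shift $\opname{proj.dim} K = d-1$, which is routine. It is worth keeping in mind \emph{why} the closure under cokernels of monomorphisms holds, since that is where the work lives: multiplication by the idempotent $e_i$ is an exact functor sending the sequence above to a short exact sequence $0 \to Ke_i \to Pe_i \to Me_i \to 0$ of $H_i$-modules, and because $H_i = K[\varepsilon_i]/(\varepsilon_i^{c_i})$ is self-injective, the free (equivalently injective) submodule $Ke_i$ forces this sequence to split; hence $Me_i$ is a direct summand of the free module $Pe_i$, and a projective module over the local ring $H_i$ is free. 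As this is exactly what Lemma~\ref{closed under extension} packages, the corollary follows without further difficulty.
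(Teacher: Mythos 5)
Your proof is correct and is exactly the argument the paper intends (the paper treats the corollary as immediate from the fact that projective $\Pi$-modules are locally free together with Lemma~\ref{closed under extension}): induct on $\opname{proj.dim} M$ via a syzygy sequence $0\to K\to P\to M\to 0$ and apply closure under cokernels of monomorphisms. The only cosmetic remark is that your parenthetical ``say a projective cover'' is unnecessary (and projective covers need not exist over the infinite-dimensional $\Pi$ of non-Dynkin type) --- any surjection from a projective does the job, which is all your argument actually uses.
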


For preprojective algebras of non-Dynkin types,  the locally free modules are characterized by the finiteness of projective dimension.
Namely, we have
\begin{lemma}\cite[Proposition 12.3]{GLS14}\label{nondynkin equivalent}
Let $C$ be a symmetrizable Cartan matrix with a symmetrizer $D$ which has no component of Dynkin type and $\Pi=\Pi(C, D)$ the associated preprojective algebra.
Let  $M$ be  a  $\Pi$-module, then the following statements are equivalent: 

${(1)}$ $M$ is locally free;  

${(2)}$ $\opname{proj.dim} M\leq 2$;

${(3)}$ $\opname{proj.dim} M< \infty$.
\end{lemma}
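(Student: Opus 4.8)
The three implications separate into a routine part and a substantial part. The implication $(2)\Rightarrow(3)$ is immediate, and $(3)\Rightarrow(1)$ is precisely Corollary~\ref{lem of finite proj dim}. Thus the entire content is the implication $(1)\Rightarrow(2)$: every locally free $\Pi$-module $M$ satisfies $\opname{proj.dim} M\leq 2$. My plan is to exhibit, functorially in $M$, an explicit projective resolution of length two.

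The resolution is obtained by tensoring $M$ with a bimodule resolution of $\Pi$ adapted to the subalgebra $E:=\prod_i H_i\subseteq\Pi$. Concretely, I would write down the complex of $\Pi$-$\Pi$-bimodules
\[
0\longrightarrow \bigoplus_{i}\Pi e_i\ten_{H_i}e_i\Pi \xrightarrow{\ d_2\ } \bigoplus_{a}\Pi e_{t(a)}\ten e_{s(a)}\Pi \xrightarrow{\ d_1\ } \bigoplus_{i}\Pi e_i\ten_{H_i}e_i\Pi \xrightarrow{\ \mu\ } \Pi\longrightarrow 0,
\]
where $\mu$ is multiplication, the middle sum runs over the non-loop arrows $a$ of $\overline{Q}$, and $d_1,d_2$ are assembled from the arrows together with the mesh relations $(P_3)$ (the generalizations of the classical Schofield--Crawley-Boevey differentials, with $\ten_K$ replaced by $\ten_{H_i}$ in the outer terms to encode the loops $\varepsilon_i$ and the commutativity relations $(P_2)$). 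Applying $M\ten_\Pi(-)$ sends each outer term to $\bigoplus_i Me_i\ten_{H_i}e_i\Pi$ and the middle term to $\bigoplus_a Me_{t(a)}\ten e_{s(a)}\Pi$. Since $M$ is locally free, each $Me_i$ is a free $H_i$-module, so $Me_i\ten_{H_i}e_i\Pi$ is a direct sum of copies of the indecomposable projective $e_i\Pi$; likewise $Me_{t(a)}\ten e_{s(a)}\Pi$ is a sum of copies of $e_{s(a)}\Pi$. Hence all three terms of $M\ten_\Pi(-)$ are projective $\Pi$-modules, and once exactness is in hand this yields $\opname{proj.dim} M\leq 2$.

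Two points carry the real weight, and I expect the first to be the main obstacle. First, one must show that the displayed bimodule complex is genuinely a resolution of $\Pi$: the identities $\mu d_1=0$ and $d_1 d_2=0$ are direct computations with the relations $(P_1)$--$(P_3)$, but exactness at the two leftmost spots — in particular injectivity of $d_2$ — is the crux, and this is exactly where the non-Dynkin hypothesis enters, being equivalent to $q_C$ having no positive-definite component. In the Dynkin case injectivity fails, consistent with $\Pi$ being selfinjective (Lemma~\ref{selfinjective}) and therefore admitting no non-projective module of finite projective dimension. Second, I must check that $M\ten_\Pi(-)$ preserves exactness of this complex; this is a relative-homological-algebra point, namely that the bimodule resolution is built from relatively $E$-projective bimodules and is $E$-split, so the additive functor $M\ten_\Pi(-)$ carries it to an exact sequence precisely because $M$ is $E$-projective, i.e. locally free. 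This dovetails with Lemma~\ref{closed under extension}, which guarantees that the syzygies of $M$ are again locally free and hence keeps the inductive bookkeeping inside $\rep_{l.f.}(\Pi)$. Granting the exactness of the length-two bimodule complex, tensoring with a locally free $M$ and identifying the projective summands completes the proof.
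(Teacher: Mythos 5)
First, a point of comparison: the paper does not prove this lemma at all --- it is quoted verbatim from \cite[Prop.~12.3]{GLS14} --- so the benchmark is the argument of Gei\ss--Leclerc--Schr\"oer, and your plan does mirror their strategy (a length-two complex of projective $\Pi$-bimodules, tensored against $M$). Your reduction is correct: $(2)\Rightarrow(3)$ is trivial, $(3)\Rightarrow(1)$ is Corollary~\ref{lem of finite proj dim}, and all content sits in $(1)\Rightarrow(2)$. But as a proof the proposal has a genuine gap exactly where you flag it: the exactness of the bimodule complex, in particular injectivity of $d_2$, is the entire mathematical content of the citation, and ``granting the exactness'' concedes it. Saying that this is where the non-Dynkin hypothesis ``enters, being equivalent to $q_C$ having no positive-definite component'' names the hypothesis but gives no mechanism; in the source this is the hard step, carried out via the tensor-algebra description of $\Pi$ over $E=\prod_i H_i$ and the decomposition $\Pi\cong\bigoplus_{k\geq 0}\tau^{-k}(H)$, whose graded pieces are nonzero in every degree precisely in the non-Dynkin case. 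Without some such input your argument is a restatement of the result to be proved.

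Two further corrections. First, your middle term is wrong as displayed: a sum $\bigoplus_a \Pi e_{t(a)}\otimes_K e_{s(a)}\Pi$ over non-loop arrows cannot encode the relations $(P_2)$ (which tie the loops $\varepsilon_i,\varepsilon_j$ across each arrow, forcing the middle term to be a bimodule over $H_i$ and $H_j$), and it produces the wrong multiplicities: applying $E_i\otimes_\Pi(-)$ would give $(e_j\Pi)^{g_{ij}c_i}$ where Lemma~\ref{lem of EI resolution} requires $(e_j\Pi)^{|c_{ji}|}$ --- already in Example~\ref{eg1} this is exponent $2$ versus $1$. The correct middle term is $\bigoplus_{(i,j)\in\overline{\Omega}}\Pi e_i\otimes_{H_i}{}_iH_j\otimes_{H_j}e_j\Pi$ with the $H_i$-$H_j$-bimodules ${}_iH_j$ of \cite{GLS14} (free of rank $|c_{ji}|$ as right $H_j$-modules). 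Second, your ``relative-homological'' justification for exactness after tensoring is misplaced: once the bimodule complex is exact and each term is projective as a \emph{left} $\Pi$-module (which holds because $\Pi$ is locally free over itself), $M\otimes_\Pi(-)$ carries it to an exact sequence for \emph{every} right module $M$, since the tensored complex computes $\Tor^\Pi_*(M,\Pi)=0$; no $E$-splitness and no local freeness of $M$ is needed for exactness. Local freeness of $M$ is needed only for the conclusion you state earlier, namely that the terms $Me_i\otimes_{H_i}e_i\Pi$ are projective (for $M=S_i$ with $c_i\geq 2$ the sequence is still exact but its terms are not projective, which is why such $M$ have infinite projective dimension). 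So after these repairs the sole missing ingredient is still the exactness of the bimodule resolution --- that is, the cited result itself.
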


The following well-known fact for classical preprojective algebras has been generalized to locally free modules for the preprojective algebras associated with symmetrizable Cartan matrices in ~\cite{GLS14}.
\begin{lemma}\cite[Theorem 12.6]{GLS14}\label{2-cy}
Let $M \in \Rep_{l.f.}(\Pi),$ $ N \in \rep_{l.f.}(\Pi).$ Then

(a) there is a functorial isomorphism
\[\Ext^1_{\Pi}(M,N) \cong 
\mathbb{D}\Ext^1_{\Pi}(N,M).\]

(b) if   $C$ has no component of Dynkin type, there are
functorial isomorphisms
\[\Ext^{2-i}_{\Pi}(M,N)\cong
\mathbb{D}\Ext^i_{\Pi}(N,M)~\text{for~$i=0,1,2$.}\]
\end{lemma}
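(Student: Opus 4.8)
The plan is to deduce both parts from a single self-dual projective bimodule resolution of $\Pi$, taken relative to the subalgebra $E := H_1 \times \cdots \times H_n$ that records the loops $\varepsilon_i$. First I would construct an exact sequence of $\Pi$-bimodules
\[
0 \longrightarrow \Pi \ten_E \Pi \longrightarrow \Pi \ten_E V \ten_E \Pi \longrightarrow \Pi \ten_E \Pi \longrightarrow \Pi \longrightarrow 0,
\]
where $V$ is the $E$-bimodule spanned by the arrows $a^{(g)}_{ij}$ of $\overline{Q}$: the last map is multiplication, the middle differential $x \ten a \ten y \mapsto xa\ten y - x \ten ay$ records the arrows, and the left-hand differential encodes the mesh relations $(P_3)$, so that the pairing $a^{(g)}_{ij}\leftrightarrow a^{(g)}_{ji}$, with the signs $\sgn(i,j)$ and the $\varepsilon$-powers dictated by $(P_2)$, makes the complex palindromic. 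Since each $H_i = K[\varepsilon_i]/(\varepsilon_i^{c_i})$ is a symmetric (Frobenius) $K$-algebra, the functor $\mathbb{D}$ is compatible with the $E$-relative structure, and the self-duality of the resolution under $\Hom_{\Pi\ten_K\Pi\op}(-,\Pi\ten_K\Pi\op)$ is exactly what will produce the two dualities. I also note that a finitely generated locally free $N$ is finite-dimensional over $K$, since each $Ne_i$ is free of finite rank over $H_i$, so $\mathbb{D}$ applies to $\Ext^\bullet_\Pi(N,M)$.

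For part (a) I would observe that the duality of $\Ext^1$ only sees the symmetry of the resolution about its middle term $\Pi \ten_E V \ten_E \Pi$. Tensoring the resolution with $N$ yields, for $M$ locally free, a bilinear pairing
\[
\Ext^1_\Pi(M,N) \times \Ext^1_\Pi(N,M) \longrightarrow K
\]
given by Yoneda composition followed by the trace supplied by the mesh relation; the locally free hypothesis guarantees that $Me_i$ and $Ne_i$ are free, hence projective-injective, over the self-injective ring $H_i$, which is what makes this pairing non-degenerate and functorial. As this argument uses only the middle of the resolution, it requires no hypothesis on the Dynkin type and gives (a) for every symmetrizable $C$.

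For part (b) I would exploit that $C$ has no Dynkin component: by Lemma~\ref{nondynkin equivalent} every locally free $\Pi$-module has projective dimension at most $2$, so the bimodule resolution above is a genuine finite resolution and $\Ext^i_\Pi(N,M) = 0$ for $i \geq 3$. Applying $\Hom_\Pi(-,M)$ to the length-$2$ projective resolution of $N$ extracted from the bimodule resolution, and then $\mathbb{D}$, the palindromic identification of the term in homological degree $2-i$ with the dual of the term in degree $i$ yields the functorial isomorphisms $\Ext^{2-i}_\Pi(M,N) \cong \mathbb{D}\Ext^i_\Pi(N,M)$ for $i = 0,1,2$, the case $i=1$ recovering (a).

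\textbf{The main obstacle} I expect is the construction and verification of the twisted bimodule resolution itself. Unlike the classical case, where $E$ is the semisimple algebra $\prod_i Ke_i$ and the differentials are the naive ones, here the differentials must carry the $\varepsilon$-twists forced by the commutativity relations $(P_2)$ and the signed $\varepsilon$-powers of the mesh relation $(P_3)$, and one must check both exactness relative to $E$ and compatibility of the self-duality pairing with these twists. It is precisely the failure of the degree-$2$ term to embed in Dynkin type, where $\Pi$ is only stably $2$-Calabi-Yau, consistent with its self-injectivity in Lemma~\ref{selfinjective}, that restricts the full duality in (b) to the non-Dynkin case.
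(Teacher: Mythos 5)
You should know at the outset that the paper contains no proof of Lemma~\ref{2-cy}: it is imported verbatim from \cite[Theorem 12.6]{GLS14}, so the only honest comparison is with the proof in that source. Your plan in fact reconstructs exactly that argument: the $E$-relative bimodule complex you propose, with $E=H_1\times\cdots\times H_n$ and $V$ spanned by the arrows $a^{(g)}_{ij}$, is precisely the complex of \cite[Section 12]{GLS14} (its right-exact portion is \cite[Proposition 12.1]{GLS14}, which this paper itself invokes in the proof of Lemma~\ref{lem of EI resolution}), and the duality is obtained there, as in your sketch, from the palindromic self-duality of this complex together with the fact that each $H_i=K[\varepsilon_i]/(\varepsilon_i^{c_i})$ is a symmetric algebra. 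So you are on the intended route rather than a genuinely different one.

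There is, however, one concrete error in your write-up, plus a deferral of the main point. You open by asserting the \emph{exact} sequence
\[
0 \to \Pi\otimes_E\Pi \to \Pi\otimes_E V\otimes_E\Pi \to \Pi\otimes_E\Pi \to \Pi \to 0
\]
for arbitrary symmetrizable $C$. Left-exactness fails precisely when $C$ has a Dynkin component: all three nontrivial terms are projective, hence flat, as one-sided $\Pi$-modules (because $\Pi$ is locally free on both sides), so if the sequence were exact then applying $M\otimes_\Pi-$ to it would remain exact for every module $M$ and, for $M$ locally free, would yield a projective resolution of $M$ of length at most two; in Dynkin type this contradicts Lemma~\ref{selfinjective}, under which the nonprojective module $E_i$ has infinite projective dimension. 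The failure is visible concretely in the extra term $E_{\sigma(i)}$ of the sequence (\ref{resolution of E_I dynkin case}) in Lemma~\ref{lem of EI resolution}. Your closing paragraph concedes exactly this (``the failure of the degree-$2$ term to embed in Dynkin type''), which contradicts your opening construction; the repair, and what \cite{GLS14} actually proves, is that the complex is exact at the three right-hand positions for every $C$, and exact everywhere if and only if $C$ has no Dynkin component --- part (a) uses only the generally valid portion (consistent with your remark that (a) ``only sees the middle''), while part (b) needs the full resolution, which is why it carries the non-Dynkin hypothesis (compare Lemma~\ref{nondynkin equivalent}). Finally, the nondegeneracy and functoriality of your trace pairing --- where the $\varepsilon$-twists from $(P_2)$, the signs $\sgn(i,j)$ in $(P_3)$, and the symmetry of each $H_i$ must actually be used --- is asserted rather than proved; you rightly flag it as the main obstacle, but since it is the entire content of the theorem, as written the proposal is a correctly oriented outline of the \cite{GLS14} proof rather than a proof.
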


\subsection{Generalized simple modules $E_i$}

Let $C$ be a symmetrizable   Cartan matrix with a symmetrizer $D = \opname{diag}(c_1,\cdots , c_n)$.
 Let $M$ be a locally free  $\Pi$-module. For each $i \in Q_0$, let $r_i$ be the rank of the free $H_i$-module
$Me_i$.
 Put $\opname{rank}\nolimits(M) := (r_1, \cdots , r_n)$ and we  call $r_1+\cdots+r_n$ the {\it rank length} of $M$.

Let $E_1, \cdots ,E_n$ be the indecomposable locally free  $\Pi-$modules
with $\opname{rank}\nolimits(E_i) = \alpha_i$, where $\alpha_1,\cdots , \alpha_n$ is the standard basis of $\mathbb{Z}_n$.  In particular, $E_i$ is the regular representation of $H_i$ and we call $E_i$  {\it  the generalized simple} $\Pi$-module. 
 Obviously, $E_i$ is also an $H$-module and we also call it a generalized simple $H$-module. 
If we denote by $S_i$  the simple $\Pi$-module associated to the vertex $i$, then $E_i$ is the uniserial module with composition factor $S_i$ of  length $c_i$. For each $1\leq d\leq c_i$, let  $^{d}S_i$  be the uniserial module of  length $d$.

\begin{proposition}
 For each $j\in Q_0$, denote by $e_j\Pi$ the projective right $\Pi$-module associated to the vertex $j$. We have
 \begin{enumerate}
 \item[(a)]~ $\Hom_{\Pi}(e_i\Pi,E_i)\cong E_i$ and $\Hom_{\Pi}(e_j\Pi,E_i)=0$ for $i\neq j$, where $E_i$ is the generalized simple $\Pi$-module associated to $i$;
 \item[(b)]~ $e_i\Pi\otimes_{\Pi}E_i\cong E_i$ and  $e_j\Pi\otimes_{\Pi}E_i=0$ for $i\neq j$,  where $E_i$ is the generalized simple $\Pi^{op}$-module associated to $i$.
 \end{enumerate}
 \end{proposition}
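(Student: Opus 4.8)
The plan is to reduce both assertions to the two elementary ``evaluation'' isomorphisms attached to an idempotent and then to exploit that the generalized simple module $E_i$ is concentrated at the single vertex $i$.

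First I would recall the natural isomorphisms valid for any idempotent $e_j$: for a right $\Pi$-module $M$ the evaluation $f\mapsto f(e_j)$ yields $\Hom_\Pi(e_j\Pi,M)\cong Me_j$, and for a left $\Pi$-module $N$ the multiplication $e_jr\otimes n\mapsto e_jrn$ yields $e_j\Pi\otimes_\Pi N\cong e_jN$; both are natural and respect the residual action of $e_j\Pi e_j\cong\End_\Pi(e_j\Pi)$. The second ingredient is immediate from $\rank(E_i)=\alpha_i$: the module $E_i$, viewed in (a) as a right $\Pi$-module and in (b) as a right $\Pi^{op}$-module (that is, a left $\Pi$-module), is supported only at the vertex $i$, so that $E_ie_j=0$ and $e_jE_i=0$ for $j\ne i$, while $E_ie_i=E_i=e_iE_i$.

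Substituting $M=E_i$ and $N=E_i$ into the two isomorphisms at once gives $\Hom_\Pi(e_j\Pi,E_i)\cong E_ie_j=0$ and $e_j\Pi\otimes_\Pi E_i\cong e_jE_i=0$ for $j\ne i$, which disposes of the vanishing halves of (a) and (b). For the diagonal case $j=i$ the point is to identify the surviving space $E_ie_i=e_iE_i$ with $E_i$ as a module, not merely as a vector space. Here I would observe that every path in $e_i\Pi e_i$ which leaves the vertex $i$ must factor through some arrow $i\to j$ with $j\ne i$, and hence annihilates $E_i$ because $E_i$ vanishes at $j$; consequently the $e_i\Pi e_i$-action on $E_ie_i$ factors through the truncated polynomial ring $H_i=K[\varepsilon_i]/(\varepsilon_i^{c_i})$, on which it is by definition the regular representation. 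Since $E_i$ is by construction precisely this regular $H_i$-module sitting at the vertex $i$, the evaluation and multiplication maps become isomorphisms $\Hom_\Pi(e_i\Pi,E_i)\cong E_i$ and $e_i\Pi\otimes_\Pi E_i\cong E_i$, as required.

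I expect the only genuinely delicate step to be this last piece of bookkeeping---checking that the residual $e_i\Pi e_i$-action descends to $H_i$ and reproduces the regular representation, so that the identification is one of modules and not just of underlying vector spaces; the remaining arguments are formal consequences of the idempotent-$\Hom$ and idempotent-tensor adjunctions together with the support of $E_i$.
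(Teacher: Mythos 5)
Your proof is correct and follows essentially the same route as the paper, which likewise reduces everything to the identifications $\Hom_\Pi(e_j\Pi,E_i)\cong E_ie_j$ and $e_j\Pi\otimes_\Pi E_i\cong e_jE_i$ together with the support fact $E_ie_j=0=e_jE_i$ for $j\neq i$ and $E_ie_i=E_i=e_iE_i$. Your additional verification that the residual $e_i\Pi e_i$-action factors through $H_i=K[\varepsilon_i]/(\varepsilon_i^{c_i})$ and recovers the regular representation is a harmless refinement the paper leaves implicit.
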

 \begin{proof}
 Note that $\Hom_{\Pi}(e_j\Pi, E_i)\cong E_ie_j$ and $e_j\Pi\ten_{\Pi}E_i\cong e_jE_i$. Now the results follow from the fact that $e_jE_i=0=E_ie_j$ for $i\neq j$ and $e_iE_i=E_i=E_ie_i$.
 \end{proof}

We know that if $C$ is of Dynkin type, by Lemma  \ref{selfinjective},  $\Pi$ is a selfinjective algebra.
Denote by $\nu=\mathbb{D}\Hom_{\Pi}(-,\Pi)$   the Nakayama functor and $\sigma:Q_0\to Q_0$  the Nakayama permutation of $\Pi$, $i.e.\ e_i\Pi\cong \mathbb{D}(\Pi
e_{\sigma(i)})$. Then we have
\begin{proposition}\label{ci=csigmai}
Let $C$ be a symmetrizable Cartan matrix of Dynkin type with a symmetrizer $D$ and  $\Pi=\Pi(C,D)$ the associated preprojective algebra. 
We have $\nu E_{\sigma(i)}\cong E_i$ and $c_i=c_{\sigma({i})}$ for each $i\in Q_0$.
\end{proposition}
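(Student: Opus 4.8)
The plan is to use that, since $C$ is of Dynkin type, $\Pi$ is selfinjective by Lemma~\ref{selfinjective}, so the Nakayama functor $\nu=\mathbb{D}\Hom_{\Pi}(-,\Pi)$ is an autoequivalence of $\mod\Pi$. In particular $\nu$ is exact and, being an equivalence of abelian categories, it preserves composition length and indecomposability and permutes the isomorphism classes of simple modules. The first step is to identify this permutation. From $\Hom_{\Pi}(e_i\Pi,\Pi)\cong\Pi e_i$ one gets $\nu(e_i\Pi)\cong\mathbb{D}(\Pi e_i)$, and the defining relation $e_j\Pi\cong\mathbb{D}(\Pi e_{\sigma(j)})$ of the Nakayama permutation, taken at $j=\sigma^{-1}(i)$, gives $\nu(e_i\Pi)\cong e_{\sigma^{-1}(i)}\Pi$. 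Applying $\nu$ to the projective cover $e_i\Pi\twoheadrightarrow S_i$ and using that $\nu S_i$ is simple, I conclude $\nu S_i\cong S_{\sigma^{-1}(i)}$.

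Next I would transport the structure of $E_i$ through $\nu$. Since $E_i$ is uniserial of length $c_i$ with every composition factor isomorphic to $S_i$, exactness of $\nu$ shows that $\nu E_i$ is again indecomposable and uniserial of length $c_i$, now with every composition factor isomorphic to $\nu S_i\cong S_{\sigma^{-1}(i)}$. The elementary but crucial observation is that any $\Pi$-module $M$ all of whose composition factors are $S_j$ satisfies $M=Me_j$ and is annihilated by every arrow except the loop $\varepsilon_j$, because $\varepsilon_j=e_j\varepsilon_je_j$ is the only arrow of $\overline{Q}$ that both starts and ends at $j$; hence $M$ is a module over $H_j=K[\varepsilon_j]/(\varepsilon_j^{c_j})$, and if $M$ is indecomposable it is one of the uniserial modules ${}^{d}S_j$ with $d\le c_j$. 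Applying this to $\nu E_i$ with $j=\sigma^{-1}(i)$ yields $c_i\le c_{\sigma^{-1}(i)}$ for every $i$.

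To finish, I would close this inequality by counting: substituting $\sigma(i)$ for $i$ gives $c_{\sigma(i)}\le c_i$ for all $i$, and since $\sigma$ is a permutation of the finite set $Q_0$ we have $\sum_i c_{\sigma(i)}=\sum_i c_i$, which forces $c_{\sigma(i)}=c_i$, equivalently $c_i=c_{\sigma^{-1}(i)}$, for every $i$. Then $\nu E_i$ is an indecomposable $H_{\sigma^{-1}(i)}$-module of length $c_i=c_{\sigma^{-1}(i)}$, hence of maximal length, so $\nu E_i\cong E_{\sigma^{-1}(i)}$; replacing $i$ by $\sigma(i)$ gives the desired $\nu E_{\sigma(i)}\cong E_i$. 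I expect the main work to lie not in any single hard estimate but in setting up the bookkeeping correctly: verifying that $\nu$ is an exact, length-preserving autoequivalence with the $\sigma^{-1}$ indexing on simples, and proving the reduction that a module with all composition factors $S_j$ is genuinely an $H_j$-module of length at most $c_j$.
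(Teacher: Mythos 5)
Your proof is correct, and although it follows the same broad strategy as the paper --- exploit that $\Pi$ is selfinjective so that $\nu$ is an exact autoequivalence, transport $E$ through $\nu$, and compare lengths --- the execution of the key step is genuinely different. The paper proves $\nu({}^{d}S_{\sigma(i)})\cong {}^{d}S_{i}$ by induction on $d$: it applies the exact functor $\nu$ to $0\to S_{\sigma(i)}\to {}^{d}S_{\sigma(i)}\to {}^{d-1}S_{\sigma(i)}\to 0$ and invokes the uniqueness of the non-split extension of $S_i$ by ${}^{d-1}S_i$, concluding $\nu(E_{\sigma(i)})\cong {}^{c_{\sigma(i)}}S_i$ and hence $c_{\sigma(i)}\le c_i$; the reverse inequality is then obtained by repeating the argument with $\nu^{-1}=\Hom_{\Pi}(\mathbb{D}\Pi,-)$. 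You replace this induction with a structural observation: a module $M$ whose composition factors are all $S_j$ satisfies $M(1-e_j)=0$, hence $MI_j=0$ (since $m\cdot p(1-e_j)q=(mp)(1-e_j)\cdot q=0$), so $M$ is a module over $\Pi/I_j$, a quotient of $H_j=K[\varepsilon_j]/(\varepsilon_j^{c_j})$, whose indecomposables are exactly the uniserial ${}^{d}S_j$ with $d\le c_j$ --- this is where the paper's inductive extension-uniqueness argument gets absorbed into the elementary classification of modules over a truncated polynomial ring. You also close the inequality differently: instead of running $\nu^{-1}$, you note $c_i\le c_{\sigma^{-1}(i)}$ for all $i$ and sum over the permutation $\sigma$ to force equality, a pleasant counting shortcut. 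Your preliminary identification $\nu S_i\cong S_{\sigma^{-1}(i)}$ via $\nu(e_i\Pi)\cong\mathbb{D}(\Pi e_i)\cong e_{\sigma^{-1}(i)}\Pi$ and projective covers is carried out more explicitly than in the paper, which simply asserts $\nu(\opname{soc}e_i\Pi)=\opname{top}e_i\Pi$; the two are consistent after the substitution $i\mapsto\sigma(i)$, and your final indexing $\nu E_{\sigma(i)}\cong E_i$ matches the statement. Both proofs rest on the same standard facts (exactness of $\nu$ for selfinjective algebras, uniqueness of the uniserial ${}^{d}S_j$), so there is no gap; your version buys a cleaner, less computational argument at the cost of needing $\Pi/I_j$ to be a quotient of $H_j$, while the paper's stays entirely inside extension theory of the modules ${}^{d}S_i$.
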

\begin{proof}
We show that for each $1\leq d\leq c_{\sigma(i)}$, $\nu (~^dS_{\sigma(i)})\cong ~^dS_i$ by induction on $d$. For $d=1$, we clearly have $\nu(S_{\sigma(i)})=\nu(\opname{soc}e_i\Pi)=\opname{top}e_i\Pi=S_i$. Now suppose that we have $\nu(~^{d-1}S_{\sigma(i)})\cong~^{d-1}S_i$. Consider the following  non-split short exact sequence
\[0\to S_{\sigma(i)}\to ~^dS_{\sigma(i)}\to ~^{d-1}S_{\sigma(i)}\to 0.
\]
Recall that $\Pi$ is selfinjective and hence the functor $\nu$ is exact. Applying $\nu$ to the above short exact sequence yields the following non-split short exact sequence
\[0\to \nu(S_{\sigma(i)})\to \nu(~^dS_{\sigma(i)})\to \nu(~^{d-1}S_{\sigma(i)})\to 0.
\]
Now one can deduce $\nu(~^dS_{\sigma(i)})\cong ~^dS_i$ from the fact that there is a unique non-split extension of $S_i$ by $~^{d-1}S_i$ whose middle term is $~^dS_i$. In particular, we have $\nu(E_{\sigma(i)})\cong~^{c_{\sigma(i)}}S_i$ which is indecomposable. Consequently, $c_{\sigma(i)}\leq c_i$. Similarly, using the functor $\nu^{-1}=\Hom_{\Pi}(\mathbb{D}\Pi,-)$, one can show that $c_i\leq c_{\sigma(i)}$. Hence we have $c_i=c_{\sigma(i)}$ and $\nu(E_{\sigma(i)})\cong E_i$.
\end{proof}

The following result is an analogue of the classical cases~({\it cf.}~\cite[Prop 4.2]{BBK}) which plays an important role in our investigation.
\begin{lemma}~\label{lem of EI resolution}
Let $\Pi=\Pi(C,D)$ be a preprojective algebra associated to a symmetrizable Cartan matrix $C$ with a symmetrizer $D$. Let $E_i$ be a generalized simple $\Pi$-module.

If $C$ has no component of Dynkin type, then we  have an exact sequence
\begin{eqnarray}0\to e_i\Pi\to \bigoplus_{j\in \overline{\Omega}(i,-)}(e_j\Pi)^{|c_{ji}|}\to e_i\Pi\to E_i\to 0.\label{resolution of E_I nondynkin case}\end{eqnarray}

If  $C$ is of Dynkin type, then we  have an exact sequence
\begin{eqnarray}0\to E_{\sigma(i)}\to e_i\Pi\to \bigoplus_{j\in  \overline{\Omega}(i,-)}(e_j\Pi)^{|c_{ji}|}\to e_i\Pi\to E_i\to 0,\label{resolution of E_I dynkin case}\end{eqnarray}
where $\sigma:Q_0\to Q_0$ is a Nakayama permutation of $\Pi$. 

\end{lemma}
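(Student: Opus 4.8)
The plan is to write down an explicit four-term complex of projectives, to recognise its right-hand part as the minimal projective presentation of $E_i$ (uniformly in $C$), and then to read off the left-hand kernel separately in the two cases.

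First I would construct the maps. Let $d_0\colon e_i\Pi\to E_i$ be the canonical projection; since $\opname{top}(e_i\Pi)=S_i=\opname{top}(E_i)$ this is a projective cover. Index the $|c_{ji}|=g_{ij}f_{ji}$ copies of $e_j\Pi$ attached to $j\in\overline{\Omega}(i,-)$ by pairs $(g,f)$ with $1\le g\le g_{ij}$ and $0\le f\le f_{ji}-1$. I would define $d_1$ on the $(j,g,f)$-summand to be left multiplication by the path $\varepsilon_i^{f}a_{ij}^{(g)}\colon j\to i$, and $d_2$ into the $(j,g,f)$-summand to be left multiplication by $\sgn(i,j)\,a_{ji}^{(g)}\varepsilon_i^{f_{ji}-1-f}\colon i\to j$. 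A direct check gives $d_0d_1=0$, because each $a_{ij}^{(g)}$ lands in $\ker d_0$, and, crucially, $d_1d_2(e_i)=\sum_{j,g,f}\sgn(i,j)\,\varepsilon_i^{f}a_{ij}^{(g)}a_{ji}^{(g)}\varepsilon_i^{f_{ji}-1-f}$, which is precisely the mesh relation $(P_3)$ at $i$ and hence vanishes in $\Pi$. Thus in both cases we obtain a complex with exactly the asserted terms.

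Next I would prove exactness on the right, again uniformly. Exactness at $E_i$ is surjectivity of $d_0$. For the right-hand $e_i\Pi$ I would use the path basis: $\ker d_0$ is spanned by the paths ending at $i$ that contain a non-loop arrow, and the relations $(P_2)$ rewrite $\varepsilon_i^{f}a_{ij}^{(g)}$ with $f\ge f_{ji}$ in terms of the generators with smaller $f$, so $\{\varepsilon_i^{f}a_{ij}^{(g)}\}_{j,g,f}$ is a minimal generating set of $\ker d_0$. Consequently $d_1$ is a projective cover of $\ker d_0$, which at the same time forces the middle term to be exactly $\bigoplus_{j}(e_j\Pi)^{|c_{ji}|}$. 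For the non-Dynkin case I then argue that the complex is the whole minimal resolution: $E_i$ is locally free, so Lemma~\ref{nondynkin equivalent} gives $\opname{proj.dim}E_i\le 2$ and $\ker d_1$ is projective; and the multiplicity of $e_k\Pi$ in its cover is computed by Lemma~\ref{2-cy}(b) as $\dim\Hom_\Pi(\ker d_1,S_k)=\dim\Ext^2_\Pi(E_i,S_k)=\dim\mathbb{D}\Hom_\Pi(S_k,E_i)=\delta_{ki}$, since $\opname{soc}E_i=S_i$. Hence $\ker d_1\cong e_i\Pi$, checking that $d_2(e_i)$ is a minimal generator identifies $d_2$ with this isomorphism onto $\ker d_1$, so $\im d_2=\ker d_1$ and $\ker d_2=0$, giving \eqref{resolution of E_I nondynkin case}.

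For the Dynkin case I would keep the same complex: exactness at $E_i$, at the right $e_i\Pi$, and at the middle is proved exactly as above and never uses the Dynkin hypothesis, so that $\ker d_2$ is the third syzygy $\Omega^3(E_i)$. Now $\Pi$ is selfinjective (Lemma~\ref{selfinjective}), whence each $e_k\Pi$ is injective and $\Ext^{>0}_\Pi(E_i,\Pi)=0$; applying $\Hom_\Pi(-,\Pi)$ to the presentation and using that the pair $(d_1,d_2)$ is interchanged, up to the sign $\sgn(i,j)$, under this duality, the dualised complex again has the same shape and exhibits $\ker d_2$ as the cosyzygy $\nu^{-1}(E_i)$. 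Since Proposition~\ref{ci=csigmai} gives $\nu E_{\sigma(i)}\cong E_i$, i.e. $\nu^{-1}(E_i)\cong E_{\sigma(i)}$, this supplies the extra left-hand term and yields \eqref{resolution of E_I dynkin case}. I expect the two genuinely non-formal steps to be the main obstacles: proving exactness in the middle, that is, that the single mesh relation $(P_3)$ generates all relations among the generators of $\ker d_0$; and, in the Dynkin case, making the duality argument precise enough to identify $\ker d_2$ with $E_{\sigma(i)}$ rather than merely with a locally free module of the correct rank.
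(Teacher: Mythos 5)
Your explicit complex, with $d_1d_2(e_i)$ equal to the mesh relation $(P_3)$, reconstructs by hand the presentation that the paper simply imports from \cite[Prop.~12.1]{GLS14}, and your exactness arguments at $E_i$ and at the right-hand $e_i\Pi$ are plausible. The first genuine gap is your multiplicity count in the non-Dynkin case: you apply Lemma~\ref{2-cy}(b) with $N=S_k$, but $S_k$ is \emph{not} locally free unless $c_k=1$ (as a right module, $S_ke_k$ is one-dimensional, hence not a free $H_k$-module when $c_k>1$), and the Ext-duality of Lemma~\ref{2-cy} is established in \cite{GLS14} only for locally free modules --- keeping track of local freeness is precisely the point of this setting, and the duality fails for general modules. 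So the step $\dim\Ext^2_\Pi(E_i,S_k)=\dim\mathbb{D}\Hom_\Pi(S_k,E_i)=\delta_{ki}$ is unjustified, and with it your identification $\ker d_1\cong e_i\Pi$ and the middle exactness $\im d_2=\ker d_1$. The paper avoids this issue entirely: applying $E_i\otimes_\Pi-$ to the bimodule resolution of \cite[Prop.~12.1]{GLS14} yields exactness of $e_i\Pi\xrightarrow{f}\bigoplus_{j\in\overline{\Omega}(i,-)}(e_j\Pi)^{|c_{ji}|}\to e_i\Pi\to E_i\to 0$, including at the middle term and \emph{for every} $C$; the non-Dynkin hypothesis is used only to deduce injectivity of $f$ from $\opname{proj.dim}E_i\leq 2$ (Lemma~\ref{nondynkin equivalent}).

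The second gap is in the Dynkin case. You claim exactness at the middle ``is proved exactly as above and never uses the Dynkin hypothesis,'' but your argument above is exactly the non-Dynkin-specific one: over a Dynkin $\Pi$, which is selfinjective and not semisimple, $E_i$ has infinite projective dimension, $\ker d_1$ is not projective, and Lemma~\ref{2-cy}(b) is only available when $C$ has no Dynkin component. Nor can the $\Hom_\Pi(-,\Pi)$-duality rescue this: dualizing exchanges the right-module complex with its left-module mirror, so exactness in the middle of the dualized complex is the same unproven statement on the other side --- the argument is circular. You flag this obstacle yourself but do not overcome it. Granting middle exactness, your ending $\ker d_2\cong\Hom_\Pi(E_i',\Pi)\cong\nu^{-1}(E_i)\cong E_{\sigma(i)}$ (where $E_i'$ denotes the left-module analogue of $E_i$, and using Proposition~\ref{ci=csigmai}) would be a legitimate alternative to the paper's route; the paper instead argues concretely that $M=\ker f\subseteq e_i\Pi$ has simple socle $S_{\sigma(i)}$ and is locally free (Lemma~\ref{closed under extension}), and then proves $\dim_K M=c_{\sigma(i)}$ by choosing an orientation with $i$ a sink and computing inside $\Pi\cong\bigoplus_{j\in Q_0}\bigoplus_{k\in\mathbb{N}}\tau^{-k}(P_j)$ as a left $H$-module, together with the split sequence $0\to\mathbb{D}H\to\Pi\to\tau\Pi\to 0$ --- none of which your proposal replaces.
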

\begin{proof}
Note that the exact sequence (\ref{resolution of E_I nondynkin case}) is a direct consequence of ~\cite[Prop 12.1]{GLS14}. Namely, by applying the functor $E_i\ten_{\Pi}$ to the sequence of Proposition ~$12.1$ in ~\cite{GLS14}, we obtain the following sequence
\[e_i\Pi\xrightarrow{f} \bigoplus_{j\in\overline{\Omega}(i,-)}(e_j\Pi)^{|c_{ji}|}\to e_i\Pi\to E_i\to 0,
\]
which is the beginning of a projective resolution of $E_i$. 

If $C$ has no component of Dynkin type, then by Lemma~\ref{nondynkin equivalent}, we deduce that the map $f$ is injective and hence obtain the exact sequence (\ref{resolution of E_I nondynkin case}).

Now assume that $C$ is of Dynkin type. In this case, the preprojective algebra $\Pi$ is a finite-dimensional selfinjective algebra over $K$.
Denote by $M$ the kernel of $f$. It remains to show that $M\cong E_{\sigma(i)}$. Note that one can easily see that $M$ is locally free by Lemma~\ref{closed under extension}. On the other hand, we clearly have $\opname{soc} M=\opname{soc} e_i\Pi=S_{\sigma(i)}$. Thus, to show $M\cong E_{\sigma(i)}$, it suffices to prove that $\dim_K M=c_{\sigma(i)}$.

Now choose an orientation $\Omega$ of $C$
 such that $i$ is a sink vertex in the quiver $Q^o$. Let $H = H(C,D,\Omega
)$ be the algebra defined in Section~\ref{s:preprojectivealgebra}. Denote by $P_j=He_j$ the left projective $H$-module corresponding to the vertex $j$. Since $i$ is a sink vertex in $Q^o$, one has $\dim_K\Hom_{H}(P_i,P_i)=c_i$. Consider the following short exact sequence of left $H$-modules
\[0\to P_i\to \bigoplus_{j\in \Omega(i,-)}P_j^{|c_{ji}|}\to \tau^{-1}P_i
\to 0,\]
where $\tau$ is the Auslander-Reiten translation of left $H$-modules. Applying the functor $\Hom_H(-, \Pi)$ to the above exact sequence, we obtain an exact sequence of right $\Pi$-modules
\[0\to \Hom_H(\tau^{-1}P_i,\Pi)\to \bigoplus_{j\in \Omega(i,-)}(e_j\Pi)^{|c_{ji}|}\to e_i\Pi\to \Ext_H^1(\tau^{-1}P_i,\Pi)\to 0.\]
Recall that  as a left $H$-module, we have
\[\Pi=\bigoplus_{j\in Q_0}\bigoplus_{k\in \mathbb{N}}\tau^{-k}(P_j),
\]
which implies that $\Ext_H^1(\tau^{-1}P_i,\Pi)\cong \Ext_H^1(\tau^{-1}P_i,P_i)\cong \mathbb{D}\Hom_{H}(P_i,P_i)$. In particular, $\dim_K\Ext_H^1(\tau^{-1}P_i,\Pi)=c_i=\dim_K E_i$.
Finally, consider the split exact sequence of left $H$-modules
\[0\to \mathbb{D}H\to \Pi\to \tau \Pi\to 0,
\]
where the first map is an inclusion and the second one is the projection of $\Pi$ onto its
direct summand $\tau \Pi$. Applying the functor $\Hom_H(P_i,-)$, we obtain a short exact sequence 
\[0\to \Hom_H(P_i,\mathbb{D}H) \to \Hom_H(P_i,\Pi) \to \Hom_H(P_i,\tau \Pi)\to 0.
\]
It is clear that $\Hom_H(P_i,\tau \Pi)\cong\Hom_H(\tau^{-1}P_i, \Pi)$ and $\Hom_H(P_i,\Pi)\cong e_i\Pi$. Note that $i$ is a sink vertex in $Q^o$, which implies that $\overline{\Omega}(i,-)=\Omega(i,-)$. We conclude that $\dim_K M=\dim_K\Hom_H(P_i, \mathbb{D}H)=\dim_K\Hom_H(H, P_i)=c_i=c_{\sigma(i)}$ by Proposition~\ref{ci=csigmai}.
\end{proof}

\subsection{ The two-sided  ideal $I_i$}

Let $\Pi=\Pi(C,D)$ be a preprojective algebra. For each $i\in Q_0,$ denote by $I_i$ the two-sided ideal $\Pi(1-e_i)\Pi$,  it is easy to see that $e_jI_i=e_j\Pi(1-e_i)\Pi=e_j\Pi$ for $j\neq i$.
Thus we obtain the following decomposition of $I_i$ as right $\Pi$-module 
\[I_i= \bigoplus_{j\in Q_0}e_jI_i= e_iI_i\oplus (\bigoplus_{j\neq i}e_j\Pi).
\] 
By the definition of $I_i$, we clearly have the following short exact sequence
 \begin{eqnarray}0\to I_i\to \Pi\to E_i\to 0,\label{exact sequcence Ii and ei}\end{eqnarray}
 which induces a short exact sequence
 \begin{eqnarray}0\to e_iI_i\to e_i\Pi\to E_i\to 0.\label{exact sequcence eIi and ei}\end{eqnarray}
Now by  Lemma \ref{lem of EI resolution}, we obtain a projective presentation of $e_iI_i$: \begin{eqnarray}e_i\Pi\to \bigoplus_{j\in \overline{\Omega}(i,-)}(e_j\Pi)^{|c_{ji}|}\to e_iI_i\to 0.\notag\end{eqnarray}
Since $e_i\Pi$, $\Pi$, $E_i$ are locally free, by Lemma \ref{closed under extension}, both $I_i$ and $e_iI_i$ are locally free.
On the other hand,  $\Hom_{\Pi}(e_j\Pi, E_i)=0$ for $j\neq i,$  we obtain that $\Hom_{\Pi}(e_iI_i, E_i)=0$. In particular, we have proved the following result.
\begin{proposition}For any $i\in Q_0,$ $I_i$ and $e_iI_i$ are locally free and $ \Hom_{\Pi}(I_i, E_i) = 0$.
\end{proposition}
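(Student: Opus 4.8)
The plan is to deduce both assertions directly from the two short exact sequences (\ref{exact sequcence Ii and ei}) and (\ref{exact sequcence eIi and ei}) and the projective presentation of $e_iI_i$ recorded just above the statement, using only the closure properties of Lemma~\ref{closed under extension} and the computation of $\Hom_{\Pi}(e_j\Pi,E_i)$. First I would settle local freeness. The module $e_i\Pi$ is projective, hence locally free, and $E_i$ is locally free by construction; since $e_iI_i$ is by (\ref{exact sequcence eIi and ei}) the kernel of the epimorphism $e_i\Pi\to E_i$, closure under kernels of epimorphisms (Lemma~\ref{closed under extension}) gives that $e_iI_i$ is locally free. For $I_i$ one argues identically from (\ref{exact sequcence Ii and ei}), with $\Pi$ and $E_i$ locally free, or else invokes the decomposition $I_i=e_iI_i\oplus\bigoplus_{j\neq i}e_j\Pi$ together with the fact that a finite direct sum of locally free modules is locally free.

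The substantive point is the vanishing $\Hom_{\Pi}(I_i,E_i)=0$. Applying $\Hom_{\Pi}(-,E_i)$ to the decomposition $I_i=e_iI_i\oplus\bigoplus_{j\neq i}e_j\Pi$ splits the computation into $\Hom_{\Pi}(e_iI_i,E_i)$ and the summands $\Hom_{\Pi}(e_j\Pi,E_i)$ with $j\neq i$; the latter all vanish by the preceding Proposition on generalized simples. To kill the remaining term I would apply the left-exact contravariant functor $\Hom_{\Pi}(-,E_i)$ to the projective presentation
\[
e_i\Pi\to\bigoplus_{j\in\overline{\Omega}(i,-)}(e_j\Pi)^{|c_{ji}|}\to e_iI_i\to 0,
\]
obtaining an injection $\Hom_{\Pi}(e_iI_i,E_i)\hookrightarrow\Hom_{\Pi}\bigl(\bigoplus_{j\in\overline{\Omega}(i,-)}(e_j\Pi)^{|c_{ji}|},E_i\bigr)$. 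The one thing that must be verified is that every index $j$ occurring in $\overline{\Omega}(i,-)$ satisfies $j\neq i$: membership of $(i,j)$ in $\overline{\Omega}$ forces one of the off-diagonal entries $c_{ij}$, $c_{ji}$ to be strictly negative by axiom $(A_1)$, which is impossible when $i=j$ since $c_{ii}=2$. Hence every factor $\Hom_{\Pi}(e_j\Pi,E_i)$ in the middle term vanishes, so the middle term is zero and the injection forces $\Hom_{\Pi}(e_iI_i,E_i)=0$, completing the vanishing.

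I do not expect a genuine obstacle here: all the ingredients — the two short exact sequences, the projective presentation, the closure properties, and the Hom-computation for the indecomposable projectives — are already established, so the argument reduces to a short diagram chase. The only subtlety worth flagging is the index observation $j\neq i$ for $j\in\overline{\Omega}(i,-)$; this is precisely what prevents a copy of $\Hom_{\Pi}(e_i\Pi,E_i)\cong E_i$ from appearing in the middle term and spoiling the vanishing.
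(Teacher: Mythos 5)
Your proof is correct and follows essentially the same route as the paper: local freeness from the short exact sequences (\ref{exact sequcence Ii and ei}) and (\ref{exact sequcence eIi and ei}) via Lemma~\ref{closed under extension}, and the vanishing $\Hom_{\Pi}(I_i,E_i)=0$ from the projective presentation of $e_iI_i$ combined with $\Hom_{\Pi}(e_j\Pi,E_i)=0$ for $j\neq i$. You merely make explicit two steps the paper leaves implicit --- the observation that $j\in\overline{\Omega}(i,-)$ forces $j\neq i$ (since $(A_1)$ would require $c_{ii}<0$) and the reduction via the decomposition $I_i=e_iI_i\oplus\bigoplus_{j\neq i}e_j\Pi$ --- which is a welcome clarification rather than a deviation.
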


\section{Preprojective algebras of non-Dynkin type }~\label{S:non-Dynkin-type}

From this section to the end of this paper, we assume moreover that $K$ is an algebraically closed field.
The purpose of this section is to  generalized the classification results of \cite{BIRS} over classical preprojective algebras of non-Dynkin types. Hence, in this section, we always assume that $C$ is a symmetrizable Cartan matrix with a symmetrizer $D$ which has no component of Dynkin type and $\Pi=\Pi(C, D)$ the associated preprojective algebra. We follow~\cite[Chapter II]{BIRS}.

\begin{definition}
For an algebra $\Lambda$, we say that a finitely presented $\Lambda$-module $T$ is a {\it tilting module} if:
\begin{itemize}
\item[$(i)$]~ there exists an
exact sequence $0\to P_n \to \cdots \to P_0\to T \to 0 $ with finitely generated projective $\Lambda$-modules $P_i$;
\item [$(ii)$]~ $\Ext^i_{\Lambda}(T, T) = 0$ for any $i > 0$;
\item [$(iii)$]~ there exists an exact sequence $0\to\Lambda \to T_0 \to\cdots \to T_n \to 0$
with $T_i$ in $\add T$.
\end{itemize}

 A {\it partial $\Lambda$-tilting module} is a direct summand of a $\Lambda$-tilting module.
\end{definition}

Since tilting modules have finite projective dimension,   the projective dimension of a partial tilting module is also finite. The following result is a direct consequence of  Lemma \ref{nondynkin equivalent}.
\begin{proposition}\label{tilting are free}

All tilting  $\Pi$-modules and partial tilting $\Pi-$modules are locally free and have projective dimension at most two.
\end{proposition}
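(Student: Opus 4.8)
The plan is to reduce both assertions to Lemma~\ref{nondynkin equivalent}, which states that, for a symmetrizable Cartan matrix with no Dynkin component, a $\Pi$-module is locally free if and only if it has finite projective dimension, in which case that dimension is at most two. The essential work is therefore merely to check that tilting modules and partial tilting modules have finite projective dimension; the locally free property and the bound on projective dimension then follow at once from the lemma.

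First I would treat the tilting case. By condition $(i)$ in the definition of a tilting module, any tilting $\Pi$-module $T$ admits a finite exact sequence $0\to P_n\to\cdots\to P_0\to T\to 0$ with the $P_i$ finitely generated projective. This is exactly a finite projective resolution, so $\opname{proj.dim} T\leq n<\infty$. Applying Lemma~\ref{nondynkin equivalent} to $T$ then yields simultaneously that $T$ is locally free and that $\opname{proj.dim} T\leq 2$.

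For the partial tilting case I would use that a partial tilting module $M$ is by definition a direct summand of some tilting module $T$, say $T\cong M\oplus M'$. Since the projective dimension of a module equals the maximum of the projective dimensions of its direct summands (a projective resolution of $T$ decomposes as the direct sum of projective resolutions of $M$ and $M'$), we obtain $\opname{proj.dim} M\leq\opname{proj.dim} T<\infty$. A second application of Lemma~\ref{nondynkin equivalent}, now to $M$, shows that $M$ is locally free with $\opname{proj.dim} M\leq 2$.

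I do not anticipate any genuine obstacle: the statement is essentially a direct translation of the homological characterization recorded in Lemma~\ref{nondynkin equivalent}. The only point worth making explicit is the behaviour of projective dimension under passage to direct summands, which is precisely what transfers the finiteness of projective dimension from a tilting module to each of its partial tilting summands.
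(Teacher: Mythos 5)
Your proof is correct and follows exactly the paper's route: the paper likewise observes that condition $(i)$ of the definition gives tilting modules finite projective dimension, that this finiteness passes to direct summands (hence to partial tilting modules), and then invokes Lemma~\ref{nondynkin equivalent} to conclude local freeness together with the bound $\opname{proj.dim}\leq 2$. Your only addition is making explicit the standard fact about projective dimension of direct summands, which the paper leaves implicit.
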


For a partial tilting module  of projective dimension at most one, we  have
\begin{lemma}\label{lem of T times E}
 Let $T$ be a partial tilting $\Pi$-module of projective dimension at most one and $E_i$  a generalized simple $\Pi^{op}$-module associated to the vertex $i$. Then at least one of the statements $T
\otimes_{\Pi} E_i = 0$ and $\operatorname{\mathrm{Tor}}\nolimits^{\Pi}_1(T,E_i) = 0$
holds.
\end{lemma}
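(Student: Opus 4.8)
The plan is to pass to a minimal projective presentation of $T$ and to reduce the whole statement to a question about a single matrix over the truncated polynomial ring $H_i$. Since $T$ is finitely presented of projective dimension at most one, it has a minimal projective resolution
\[0\to P_1\xrightarrow{d}P_0\xrightarrow{\pi}T\to 0\]
with $P_0,P_1$ finitely generated projective. Let $a_k$ denote the multiplicity of $e_i\Pi$ as a summand of $P_k$. Applying $-\otimes_\Pi E_i$ and using $e_i\Pi\otimes_\Pi E_i\cong E_i$ together with $e_j\Pi\otimes_\Pi E_i=0$ for $j\neq i$, I obtain an exact sequence
\[0\to\Tor^\Pi_1(T,E_i)\to E_i^{a_1}\xrightarrow{\bar d}E_i^{a_0}\to T\otimes_\Pi E_i\to 0,\]
in which $\bar d$ is a morphism of $H_i$-modules because $\End_\Pi(E_i)\cong H_i$. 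Thus $\Tor^\Pi_1(T,E_i)=\ker\bar d$ and $T\otimes_\Pi E_i=\cok\bar d$, and $\bar d$ is an $a_0\times a_1$ matrix over the local ring $H_i$.

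The key observation is that minimality forces $\im d\subseteq\operatorname{rad}P_0$, so the induced matrix $\bar d$ has all entries in $\operatorname{rad}H_i=(\varepsilon_i)$. Hence $\bar d$ kills the socle of its source and has image inside $\operatorname{rad}(E_i^{a_0})$, which yields the two equivalences
\[T\otimes_\Pi E_i=0\iff a_0=0,\qquad \Tor^\Pi_1(T,E_i)=0\iff a_1=0.\]
Indeed $\cok\bar d$ surjects onto $E_i^{a_0}/\operatorname{rad}=S_i^{a_0}$, so by Nakayama it vanishes precisely when $a_0=0$; dually $\operatorname{soc}(E_i^{a_1})\subseteq\ker\bar d$, so $\ker\bar d\neq 0$ as soon as $a_1\geq 1$. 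Consequently the assertion of the lemma is equivalent to the purely combinatorial statement that $e_i\Pi$ is \emph{not} a common direct summand of $P_0$ and $P_1$.

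It remains to establish this last statement, and this is the heart of the argument. Suppose to the contrary that $a_0,a_1\geq 1$, and fix a split inclusion $u\colon e_i\Pi\hookrightarrow P_0$ and a split surjection $v\colon P_1\twoheadrightarrow e_i\Pi$ onto summands; let $w\in P_1$ be a generator of the chosen summand, so that $v(w)=e_i$. Consider $g:=\pi uv\colon P_1\to T$. I claim $g$ is not in the image of $d^\ast\colon\Hom_\Pi(P_0,T)\to\Hom_\Pi(P_1,T)$, $\phi\mapsto\phi d$. On the one hand $g(w)=\pi(u(e_i))$, which is nonzero in $T/\operatorname{rad}T$ because $u(e_i)$ generates an $e_i\Pi$-summand of the projective cover $P_0$; on the other hand, for every $\phi\colon P_0\to T$ one has $d(w)\in\im d\subseteq\operatorname{rad}P_0$, hence $\phi d(w)\in\operatorname{rad}T$. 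Therefore $g\neq\phi d$ for all $\phi$, so $g\notin\im d^\ast$. Since projective dimension one gives $\Ext^1_\Pi(T,T)\cong\cok d^\ast$, I conclude $\Ext^1_\Pi(T,T)\neq 0$, contradicting the rigidity of the partial tilting module $T$. This contradiction shows $a_0=0$ or $a_1=0$, which is exactly the desired conclusion.

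The step I expect to be the genuine obstacle is the final one: converting the failure of the numerical condition ``$a_0=0$ or $a_1=0$'' into a violation of rigidity. The delicate point is to write down an explicit homomorphism witnessing a nonzero self-extension, and the argument rests on the tension between minimality (which pushes $\im d$ into $\operatorname{rad}P_0$) and the projective-cover property (which keeps generators of $P_0$ out of $\operatorname{rad}T$); it is precisely projective dimension one that makes $\Ext^1_\Pi(T,T)$ computable as $\cok d^\ast$ and lets this comparison run. I note that this particular lemma requires neither the $2$-Calabi--Yau property of Lemma~\ref{2-cy} nor anything about locally free modules beyond the already-recorded fact that partial tilting $\Pi$-modules have projective dimension at most one.
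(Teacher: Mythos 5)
Your proof is correct, and its skeleton coincides with the paper's: both take a minimal projective resolution $0\to P_1\xrightarrow{d}P_0\to T\to 0$, tensor with $E_i$, and use that $e_j\Pi\otimes_{\Pi}E_i=0$ for $j\neq i$, so that everything is decided by the multiplicities $a_0,a_1$ of $e_i\Pi$ in $P_0,P_1$. Where you genuinely diverge is at the key point: the paper simply asserts that $P_0$ and $P_1$ have no common direct summand and stops. That assertion is not a formal consequence of minimality of the resolution --- for a non-rigid module of projective dimension one a minimal resolution can perfectly well repeat a summand, e.g.\ $S_1\oplus P_2$ over the path algebra of $A_2$ --- what the authors are implicitly using is that a rigid module of projective dimension at most one is $\tau$-rigid, for which the no-common-summand property is \cite[Prop.~2.5]{AIR}, the reference they do cite when rerunning this argument for support $\tau$-tilting modules in Section 5. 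You instead prove exactly this input from scratch: the identification $\Ext^1_{\Pi}(T,T)\cong\cok(d^*)$, valid because $\opname{proj.dim}T\leq 1$, together with the homomorphism $g=\pi uv$, which cannot equal any $\phi d$ since minimality forces $\phi d(w)\in\operatorname{rad}T$ while the projective-cover property keeps $g(w)=\pi(u(e_i))$ outside $\operatorname{rad}T$. So your argument is self-contained where the paper leans on an unstated known fact, and that is a real gain. Two small remarks: (i) your radical-entry analysis of $\bar d$ establishes the equivalences $T\otimes_{\Pi}E_i=0\iff a_0=0$ and $\Tor^{\Pi}_1(T,E_i)=0\iff a_1=0$, but only the trivial directions (vanishing of $a_0$ or $a_1$ forces the corresponding vanishing, since $T\otimes_\Pi E_i$ is a quotient of $E_i^{a_0}$ and $\Tor_1^\Pi(T,E_i)$ a submodule of $E_i^{a_1}$) are needed for the lemma, so that analysis could be omitted; (ii) the reason the entries of $\bar d$ lie in $\operatorname{rad}H_i$ deserves one more line than ``$\End_{\Pi}(E_i)\cong H_i$'': in general $e_i\Pi e_i$ is strictly larger than $H_i$, and the precise statement is that the relevant components of $d$ lie in $e_i(\operatorname{rad}\Pi)e_i$ and act on $e_i\Pi\otimes_{\Pi}E_i\cong E_i$ through the surjection $e_i\Pi e_i\twoheadrightarrow H_i$ killing all paths passing through vertices $j\neq i$, whence they land in $\operatorname{rad}H_i=(\varepsilon_i)$.
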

\begin{proof}
Let $0\to P_1\to P_0 \to T \to 0 $ be a  minimal projective resolution of $T$. Applying $-\otimes _{\Pi}E_i$ to this sequence, we get an  exact sequence
$$0\to \Tor_{\Pi}^1(T, E_i)\to P_1 \otimes_{\Pi}E_i \to P_0 \otimes_{\Pi}E_i\to T \otimes_{\Pi}E_i \to 0 .$$
Since $P_0$ and $P_1$ do not have a common direct summand  and $e_j\Pi\otimes_{\Pi}E_i=0$ when $j\neq i$,  we have either  $P_1 \otimes_{\Pi}E_i=0$ or $ P_0 \otimes_{\Pi}E_i=0$, which implies the desired result.
\end{proof}

The following result on (partial) tilting complexes are useful in studying derived equivalences. For  the definitions of tilting complex and two-sided tilting complex, we refer to ~\cite{Ric} for details.
\begin{lemma}\cite{Ric,Yek}\label{key lem}
For rings $\Lambda$ and $\Gamma$, let $T\in D(\Mod \Lambda\otimes_{\mathbb{Z}}\Gamma^{op})$ be a two-sided tilting complex. 

$(a)$ For any tilting complex (respectively, partial tilting complex) $U$ of $\Gamma$, we have a tilting
complex (respectively, partial tilting complex) $T\lten_{\Gamma}U$ of $\Lambda$ such that $\End_{D(\Mod \Lambda)}(T\lten_{\Gamma}U)\cong \End_{D(\Mod \Gamma)}(U)$;

$(b)$ $ \RHom_{\Lambda}(T, \Lambda)$ and $ \RHom_{\Gamma^{op}}(T,\Gamma) $ are two-sided tilting complexes which are isomorphic in
$D(\Mod~\Gamma\otimes_{\mathbb{Z}}\Lambda^{op})$.
\end{lemma}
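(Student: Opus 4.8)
The plan is to deduce both parts from Rickard's derived Morita theory, the organizing fact being that a two-sided tilting complex controls an equivalence of derived module categories. By the definition of a two-sided tilting complex there is an inverse complex $T^{-1}\in D(\Mod \Gamma\otimes_{\Z}\Lambda\op)$ with $T\lten_{\Gamma}T^{-1}\cong\Lambda$ in $D(\Mod \Lambda\otimes_{\Z}\Lambda\op)$ and $T^{-1}\lten_{\Lambda}T\cong\Gamma$ in $D(\Mod \Gamma\otimes_{\Z}\Gamma\op)$. Consequently the functor
\[F := T\lten_{\Gamma}-\colon D(\Mod \Gamma)\iso D(\Mod \Lambda)\]
is a triangle equivalence with quasi-inverse $T^{-1}\lten_{\Lambda}-$. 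Everything will be extracted from the formal properties of $F$.

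For part $(a)$ I would check that every defining property of a (partial) tilting complex transports along $F$. Since $F$ is an equivalence it preserves compact objects, and the compact objects of $D(\Mod \Gamma)$ and $D(\Mod \Lambda)$ are precisely $\per \Gamma$ and $\per \Lambda$; hence $F(U)=T\lten_{\Gamma}U$ lies in $\per \Lambda$. Full faithfulness of $F$ gives, for all $i$,
\[\Hom_{D(\Mod \Lambda)}(F(U),F(U)[i])\cong\Hom_{D(\Mod \Gamma)}(U,U[i]),\]
which yields the self-orthogonality $\Hom(F(U),F(U)[i])=0$ for $i\neq 0$ and, at $i=0$, the isomorphism $\End_{D(\Mod \Lambda)}(F(U))\cong\End_{D(\Mod \Gamma)}(U)$. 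Finally $F$ carries the thick subcategory generated by $U$ onto that generated by $F(U)$, so if $U$ generates $\per \Gamma$ then $F(U)$ generates $\per \Lambda$ and is a tilting complex. The partial tilting case follows by writing $U$ as a direct summand of a tilting complex $U'$ and applying $F$, which exhibits $F(U)$ as a direct summand of the tilting complex $F(U')$.

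For part $(b)$ the key point is to identify each of the two duals with $T^{-1}$. Set $T^{*}:=\RHom_{\Lambda}(T,\Lambda)$ and ${}^{*}T:=\RHom_{\Gamma\op}(T,\Gamma)$; both carry natural $(\Gamma,\Lambda)$-bimodule structures and hence lie in $D(\Mod \Gamma\otimes_{\Z}\Lambda\op)$. Since $T$ is perfect over $\Lambda$ (its restriction to $D(\Mod \Lambda)$ is a tilting complex), the evaluation map gives $\RHom_{\Lambda}(T,-)\cong T^{*}\lten_{\Lambda}-$, so $T^{*}\lten_{\Lambda}-$ is the right adjoint of $F$. As $F$ is an equivalence, this right adjoint coincides with the quasi-inverse $T^{-1}\lten_{\Lambda}-$, and evaluating the resulting functor isomorphism at $\Lambda$ promotes it to a bimodule isomorphism $T^{*}\cong T^{-1}$. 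Running the symmetric argument for the right-module equivalence $-\lten_{\Lambda}T\colon D(\Mod \Lambda\op)\iso D(\Mod \Gamma\op)$, whose right adjoint $\RHom_{\Gamma\op}(T,-)\cong -\lten_{\Gamma}{}^{*}T$ again agrees with the quasi-inverse $-\lten_{\Gamma}T^{-1}$, gives ${}^{*}T\cong T^{-1}$. Combining, $T^{*}\cong T^{-1}\cong{}^{*}T$; in particular both duals are two-sided tilting complexes and are isomorphic in $D(\Mod \Gamma\otimes_{\Z}\Lambda\op)$.

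The preservation statements in $(a)$ are routine formal consequences of $F$ being a triangle equivalence. The main obstacle is the bookkeeping in $(b)$: one must verify that the evaluation map $T^{*}\lten_{\Lambda}-\to\RHom_{\Lambda}(T,-)$ is an isomorphism, which rests on $T$ being perfect over $\Lambda$, and, more delicately, that the adjunction and quasi-inverse isomorphisms of \emph{functors} can be upgraded to isomorphisms of \emph{bimodule complexes}. This last point is exactly the uniqueness of the representing bimodule established in \cite{Ric, Yek}, which I would invoke rather than reprove.
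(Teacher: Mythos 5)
The paper gives no proof of this lemma at all: it is imported verbatim from Rickard and Yekutieli (the surrounding text even says the reader is referred to \cite{Ric} for the definitions), so there is no internal argument to compare yours against. Judged on its own, your reconstruction is the standard one and is essentially correct: part $(a)$ is indeed a formal transport along the equivalence $F=T\lten_{\Gamma}-$ (compactness identifies $\per\Gamma$ with $\per\Lambda$, full faithfulness transfers self-orthogonality and endomorphism rings, and equivalences preserve thick generation), and in part $(b)$ you correctly isolate the only genuinely delicate point, namely that the evaluation isomorphism $\RHom_{\Lambda}(T,-)\cong \RHom_{\Lambda}(T,\Lambda)\lten_{\Lambda}-$ and the identification of the right adjoint with the quasi-inverse are a priori isomorphisms of \emph{functors}, and upgrading them to isomorphisms in $D(\Mod\Gamma\otimes_{\mathbb{Z}}\Lambda^{op})$ requires the uniqueness of the representing bimodule complex; deferring that to \cite{Ric,Yek} is legitimate here, since that uniqueness is exactly the non-formal content of the cited papers. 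One small caution in $(a)$: your treatment of the partial tilting case by completing $U$ to a tilting complex $U'$ is only justified if ``partial tilting complex'' is \emph{defined} as a direct summand of a tilting complex (as the paper does for modules); if it instead means a self-orthogonal perfect complex, a Bongartz-type completion is not available in general for complexes --- but then your direct Hom-transfer argument already covers that case without any completion, so the proof survives under either reading.
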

We also remark that  a tilting module is nothing but a module which is a tilting complex.

\begin{proposition}\label{Ii is tilting}
 The two-side ideal $I_i$ is a tilting $\Pi$-module of projective dimension at most one and
$\End_{\Pi}(I_i)\cong\Pi$.
\end{proposition}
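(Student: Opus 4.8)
The plan is to establish the three required properties of a tilting module for $I_i$, using the short exact sequence $(\ref{exact sequcence Ii and ei})$ as the central tool and exploiting the $2$-Calabi--Yau-type duality of Lemma~\ref{2-cy}. First I would verify that $I_i$ has projective dimension at most one. Applying $\Hom_\Pi(-,\Pi)$-resolutions is awkward directly, so instead I would read off a projective resolution from $(\ref{exact sequcence Ii and ei})$: since $\Pi$ and $E_i$ are locally free, Lemma~\ref{closed under extension} gives that $I_i$ is locally free, and then by splicing the known projective resolution of $E_i$ from Lemma~\ref{lem of EI resolution} (the non-Dynkin sequence $(\ref{resolution of E_I nondynkin case})$) against $\Pi$ one obtains a projective presentation $0\to e_i\Pi\to\bigoplus_{j\in\overline{\Omega}(i,-)}(e_j\Pi)^{|c_{ji}|}\to e_iI_i\to 0$ for the only nontrivial summand $e_iI_i$ of $I_i$. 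This displays $\operatorname{proj.dim} e_iI_i\le 1$, and since all other summands $e_j\Pi$ ($j\neq i$) are projective, $\operatorname{proj.dim} I_i\le 1$. This settles condition $(i)$ of the tilting definition.

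Next I would check the rigidity condition $\Ext^1_\Pi(I_i,I_i)=0$ (higher Ext vanish automatically as $\operatorname{proj.dim} I_i\le 1$). Here I would apply $\Hom_\Pi(I_i,-)$ to $(\ref{exact sequcence Ii and ei})$, yielding a long exact sequence relating $\Ext^1_\Pi(I_i,I_i)$, $\Ext^1_\Pi(I_i,\Pi)$ and $\Ext^1_\Pi(I_i,E_i)$. The term $\Ext^1_\Pi(I_i,\Pi)$ I would control via Lemma~\ref{2-cy}(b), which gives $\Ext^1_\Pi(I_i,\Pi)\cong\mathbb{D}\Ext^1_\Pi(\Pi,I_i)=0$ since $\Pi$ is projective. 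Then the relevant connecting map identifies $\Ext^1_\Pi(I_i,I_i)$ with a cokernel or subquotient involving $\Hom_\Pi(I_i,E_i)$, which vanishes by the Proposition immediately preceding this statement. Assembling these vanishings forces $\Ext^1_\Pi(I_i,I_i)=0$, giving condition $(ii)$.

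For condition $(iii)$ I would produce the coresolution $0\to\Pi\to T_0\to T_1\to 0$ with $T_j\in\add I_i$. The natural candidate is $(\ref{exact sequcence Ii and ei})$ itself rewritten, together with a resolution of $E_i$ by $I_i$-summands; concretely one resolves $E_i$ using $(\ref{resolution of E_I nondynkin case})$, noting each $e_j\Pi$ with $j\neq i$ already lies in $\add I_i$ (being a summand of $I_i$), while the summand $e_i\Pi$ must be handled by observing that $(\ref{exact sequcence Ii and ei})$ expresses the required extension of $E_i$ by $I_i$. Dualizing the projective dimension statement via Lemma~\ref{2-cy} shows $I_i$ cogenerates $\Pi$ in the appropriate derived sense, yielding the coresolution. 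The isomorphism $\End_\Pi(I_i)\cong\Pi$ I would obtain last: applying $\Hom_\Pi(-,I_i)$ or $\Hom_\Pi(I_i,-)$ to $(\ref{exact sequcence Ii and ei})$ and using $\Hom_\Pi(I_i,E_i)=0$ together with $\Ext^1_\Pi(E_i,I_i)\cong\mathbb{D}\Ext^1_\Pi(I_i,E_i)$ (Lemma~\ref{2-cy}(a)) identifies $\End_\Pi(I_i)$ with $\Hom_\Pi(\Pi,\Pi)=\Pi$.

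The main obstacle I anticipate is condition $(iii)$, the existence of the $\add I_i$-coresolution of $\Pi$: unlike $(i)$ and $(ii)$, which follow cleanly from the explicit resolutions and the functorial Calabi--Yau duality, verifying that $\Pi$ admits a finite coresolution by modules in $\add I_i$ requires genuinely understanding how $E_i$ is built from copies of $I_i$ and $e_j\Pi$'s, and carefully checking that the splicing produces an honest exact sequence terminating in $\add I_i$ rather than merely a complex. The delicate point is the summand $e_i\Pi$ appearing in the resolution of $E_i$, which is \emph{not} in $\add I_i$ on the nose; reconciling this via the defining sequence $(\ref{exact sequcence Ii and ei})$ is where the real work lies.
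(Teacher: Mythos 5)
Your treatment of conditions $(i)$ and $(ii)$ and of the isomorphism $\End_{\Pi}(I_i)\cong\Pi$ matches the paper's proof in substance: the paper likewise reads $\opname{proj.dim} I_i\leq 1$ off the projective resolution $0\to e_i\Pi\to\bigoplus_{j\in\overline{\Omega}(i,-)}(e_j\Pi)^{|c_{ji}|}\to e_iI_i\to 0$ obtained from (\ref{resolution of E_I nondynkin case}), and it kills $\Ext^1_{\Pi}(I_i,I_i)$ by combining (\ref{exact sequcence Ii and ei}) with Lemma~\ref{2-cy} and $\Hom_{\Pi}(I_i,E_i)=0$ (the paper dimension-shifts in the first variable, $\Ext^1_{\Pi}(I_i,I_i)\cong\Ext^2_{\Pi}(E_i,I_i)\cong\mathbb{D}\Hom_{\Pi}(I_i,E_i)=0$, whereas you shift in the second; both are sound). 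For the endomorphism ring, the duality you actually need is $\Ext^{k}_{\Pi}(E_i,\Pi)\cong\mathbb{D}\Ext^{2-k}_{\Pi}(\Pi,E_i)=0$ for $k=0,1$, which yields $\Hom_{\Pi}(I_i,\Pi)\cong\Pi$ upon applying $\Hom_{\Pi}(-,\Pi)$ to (\ref{exact sequcence Ii and ei}); the isomorphism $\Ext^1_{\Pi}(E_i,I_i)\cong\mathbb{D}\Ext^1_{\Pi}(I_i,E_i)$ that you cite plays no role in that step, and without the vanishing of $\Hom_{\Pi}(E_i,\Pi)$ and $\Ext^1_{\Pi}(E_i,\Pi)$ your identification $\Hom_{\Pi}(I_i,\Pi)\cong\Pi$ is unproved.

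The genuine gap is condition $(iii)$, and you flag it yourself without closing it: ``dualizing the projective dimension statement via Lemma~\ref{2-cy} shows $I_i$ cogenerates $\Pi$ in the appropriate derived sense'' is not an argument, and (\ref{exact sequcence Ii and ei}) cannot serve as the coresolution, since $\Pi$ sits in the middle of that sequence and $E_i\notin\add I_i$. The resolution is much simpler than the ``real work'' you anticipate: the projective resolution of $e_iI_i$ you already wrote down \emph{is} the coresolution in disguise. Taking its direct sum with the trivial sequence $0\to(1-e_i)\Pi\to(1-e_i)\Pi\to 0\to 0$ gives
\[0\to\Pi\to\Bigl(\bigoplus_{j\in\overline{\Omega}(i,-)}(e_j\Pi)^{|c_{ji}|}\Bigr)\oplus(1-e_i)\Pi\to e_iI_i\to 0,\]
and every term after $\Pi$ lies in $\add I_i$: each $j\in\overline{\Omega}(i,-)$ satisfies $j\neq i$ (since $(i,j)\in\overline{\Omega}$ forces $c_{ij}<0$), so $e_j\Pi=e_jI_i$ is a direct summand of $I_i$, as is $e_iI_i$. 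The summand $e_i\Pi$ that worried you, not being in $\add I_i$, never needs to be coresolved separately --- it is absorbed into the left-hand term $\Pi=e_i\Pi\oplus(1-e_i)\Pi$. This one-line construction is exactly how the paper concludes.
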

\begin{proof}
Recall that $E_i\in \rep_{l.f.}(\Pi)$ and $I_i\in \Rep_{l.f.}(\Pi)$, one deduces that \[\Ext^1_{\Pi}(I_i, I_i)\cong \Ext^2_{\Pi}(E_i, I_i)\cong \mathbb{D}\Hom_{\Pi}(I_i, E_i) = 0\]
by  the exact sequence (\ref{exact sequcence Ii and ei}) and Lemma \ref{2-cy}. Moreover, we have \[\Ext^{k}_{\Pi}(E_i,\Pi)\cong
\mathbb{D}\Ext^{2-k}_{\Pi}(\Pi,E_i)=0\]
 for $k = 0, 1$. Now applying the functor $\Hom_{\Pi}( -, \Pi)$ to the exact sequence (\ref{exact sequcence Ii and ei}), we obtain $\Hom_{\Pi}(I_i, \Pi)\cong\Pi$.
Recall that $\Hom_{\Pi}(I_i, E_i) = 0$, we obtain $\End_{\Pi}(I_i)\cong \Pi$ by applying the functor $\Hom_{\Pi}(I_i,- )$ to the exact sequence (\ref{exact sequcence Ii and ei}).

It remains to show that $I_i$ is a tilting $\Pi$-module of projective dimension at most one.
Note that $I_i=e_iI_i\oplus(\bigoplus_{j\neq i}e_j\Pi)$ as right $\Pi$-module.
By the exact sequence (\ref{resolution of E_I nondynkin case}), we have the following projective resolution of $e_iI_i$:  \[0\to e_i\Pi\to \bigoplus_{j\in\overline{\Omega}(i,-)}(e_j\Pi)^{|c_{ji}|}\to e_iI_i\to 0.\]
In particular, it implies that $\opname{proj. dim} I_i\leq 1$. On the other hand, we also obtain the following exact sequence
\[0\to\Pi \to (\bigoplus_{j\in\overline{\Omega}(i,-)}(e_j\Pi)^{|c_{ji}|})\oplus (1-e_i)\Pi\to e_iI_i\to 0,
\]
which implies that $I_i$ is a tilting $\Pi$-module of projective dimension at most one.
\end{proof}

\begin{proposition}\label{chengji is tilting}
Let $T$ be a tilting $\Pi$-module of projective dimension at most one and $E_i$ the generalized simple $\Pi^{op}$-module associated to vertex $i$.

$(a)$ If $T\otimes_{\Pi} E_i = 0 ,$ then $T I_i=T$.

$(b)$ If $T\otimes_{\Pi} E_i \neq 0,$ then $T\lten_{\Pi}I_i\cong T\otimes _{\Pi}I_i\cong T I_i$ is  also a tilting $\Pi$-module of projective
dimension at most one.

\end{proposition}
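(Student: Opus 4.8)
The plan is to handle the two parts separately: part $(a)$ is an immediate consequence of the right-exactness of the tensor product, while part $(b)$ rests on the Tor-vanishing furnished by Lemma~\ref{lem of T times E} together with the two-sided tilting complex machinery of Lemma~\ref{key lem}. Throughout I read (\ref{exact sequcence Ii and ei}) as a sequence of left $\Pi$-modules (legitimate since $I_i$ is a two-sided ideal), so that $T\otimes_\Pi-$ may be applied.

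For part $(a)$ I would apply $T\otimes_\Pi-$ to (\ref{exact sequcence Ii and ei}) to obtain the exact sequence
\[\Tor_1^\Pi(T,E_i)\to T\otimes_\Pi I_i\xrightarrow{\;\mu\;} T\otimes_\Pi\Pi\to T\otimes_\Pi E_i\to 0.\]
Under the identification $T\otimes_\Pi\Pi\cong T$, the map $\mu$ is the multiplication $t\otimes x\mapsto tx$, whose image is exactly the submodule $TI_i$. If $T\otimes_\Pi E_i=0$ the sequence shows $\mu$ is surjective, hence $TI_i=T$.

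For part $(b)$, since $T$ is in particular a partial tilting module of projective dimension at most one and $T\otimes_\Pi E_i\neq0$, Lemma~\ref{lem of T times E} forces $\Tor_1^\Pi(T,E_i)=0$. As $\opname{proj.dim}T\leq1$ we have $\Tor_k^\Pi(T,-)=0$ for $k\geq2$; feeding (\ref{exact sequcence Ii and ei}) into the long exact Tor-sequence and using $\Tor_k^\Pi(T,\Pi)=0$ for $k\geq1$ gives $\Tor_k^\Pi(T,I_i)\cong\Tor_{k+1}^\Pi(T,E_i)=0$ for all $k\geq1$, whence $T\lten_\Pi I_i\cong T\otimes_\Pi I_i$. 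The same vanishing $\Tor_1^\Pi(T,E_i)=0$ makes $\mu$ injective in the sequence of part $(a)$, so $T\otimes_\Pi I_i\cong TI_i$; this settles the three isomorphisms in the statement.

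It remains to show this module is tilting of projective dimension at most one. By Proposition~\ref{Ii is tilting}, $I_i$ is a tilting $\Pi$-module with $\End_\Pi(I_i)\cong\Pi$, so it is a two-sided tilting complex; Lemma~\ref{key lem}$(a)$ then shows $T\lten_\Pi I_i$ is a tilting complex of $\Pi$, and being concentrated in degree zero it is a tilting module. For the projective dimension I would take a minimal projective resolution $0\to P_1\to P_0\to T\to0$: the hypothesis $T\otimes_\Pi E_i\neq0$ forces $e_i\Pi$ to be a summand of $P_0$, so by minimality it is \emph{not} a summand of $P_1$; hence $P_1\otimes_\Pi I_i$ is projective, while $P_0\otimes_\Pi I_i$ has projective dimension at most one since $e_iI_i$ does by (\ref{resolution of E_I nondynkin case}). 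Tensoring the resolution with $I_i$ (permissible as $\Tor_1^\Pi(T,I_i)=0$) yields $0\to P_1\otimes_\Pi I_i\to P_0\otimes_\Pi I_i\to T\otimes_\Pi I_i\to0$, and the standard estimate $\opname{proj.dim}(T\otimes_\Pi I_i)\leq\max\{\opname{proj.dim}(P_0\otimes_\Pi I_i),\,\opname{proj.dim}(P_1\otimes_\Pi I_i)+1\}\leq1$ concludes. The two points needing care are the conceptual input that $I_i$ is genuinely a two-sided tilting complex, so that Lemma~\ref{key lem} applies and a degree-zero tilting complex is recognized as a tilting module, and the bookkeeping with direct summands that sharpens the projective dimension from the a priori bound $2$ (Proposition~\ref{tilting are free}) down to $1$.
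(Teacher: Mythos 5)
Your proof is correct, and for part $(a)$ and the first half of part $(b)$ it coincides with the paper's: the same tensoring of the sequence $0\to I_i\to\Pi\to E_i\to0$, the same appeal to Lemma~\ref{lem of T times E} to kill $\Tor^{\Pi}_1(T,E_i)$, the same identification $T\lten_{\Pi}I_i\cong T\otimes_{\Pi}I_i\cong TI_i$, and the same use of Lemma~\ref{key lem} with $I_i$ as a two-sided tilting complex (which you justify slightly more explicitly than the paper does, via $\End_{\Pi}(I_i)\cong\Pi$ from Proposition~\ref{Ii is tilting}). The one genuine divergence is the final bound $\opname{proj.dim}(TI_i)\leq 1$. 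The paper gets it structurally: $T$ and $TI_i$ are tilting, hence locally free by Proposition~\ref{tilting are free}; the sequence $0\to TI_i\to T\to T/TI_i\to 0$ then shows $T/TI_i$ is locally free, so $\opname{proj.dim}(T/TI_i)\leq 2$ by Lemma~\ref{nondynkin equivalent}, and the bound on $TI_i$ falls out of the long exact sequence. You instead tensor a minimal projective resolution of $T$ with $I_i$ and track summands, using that $e_j\Pi\otimes_{\Pi}I_i\cong e_jI_i$ is projective for $j\neq i$ and has projective dimension one for $j=i$; this is more elementary and even identifies which projectives occur, whereas the paper's route recycles the local-freeness machinery it uses throughout Section~3. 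One caveat in your version: the claim that $e_i\Pi$ is ``by minimality'' not a summand of $P_1$ is not a consequence of minimality alone --- minimal resolutions of non-rigid modules can have common summands in $P_0$ and $P_1$. What you actually need is that $P_0$ and $P_1$ share no direct summand because $T$ is rigid, i.e.\ $\Ext^1_{\Pi}(T,T)=0$ (equivalently $\tau$-rigidity, cf.\ \cite[Prop 2.5]{AIR}); this is exactly the fact the paper itself invokes, with the same terse justification, in the proof of Lemma~\ref{lem of T times E}, so once that justification is supplied your argument is complete.
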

\begin{proof}
 Applying $T\otimes_{\Pi}-$
to the exact sequence  \[0\to I_i\xrightarrow{f}\Pi\to E_i\to 0\] yields an exact sequence
 \[\Tor^{\Pi}_1(T,E_i)\to T\otimes _{\Pi}I_i\xrightarrow{T\otimes _{\Pi} f} T\otimes _{\Pi}\Pi\to T\otimes _{\Pi}E_i\to 0.\] 
It is clear that $\im (T\otimes _{\Pi} f)=TI_i$.

 $(a)$~If $T\otimes_{\Pi} E_i = 0$, then $T\otimes _{\Pi} f$ is surjective and $T\cong T\otimes _{\Pi}\Pi\cong \operatorname{Im} (T\otimes _{\Pi} f)=TI_i$. 

$(b)$~If $T\otimes_{\Pi} E_i \neq 0$,  we have  $\Tor^{\Pi}_1(T,E_i) = 0$ by Lemma \ref{lem of T times E}.
In particular, the morphism $T\otimes _{\Pi} f$ is injective and consequently $ T\otimes _{\Pi}I_i\cong TI_i$.
Since $\opname{proj.dim} T\leq 1$, then $\Tor^{\Pi}_1(T, I_i)=\Tor^{\Pi}_2(T, E_i)=0$ and $\Tor^{\Pi}_2(T, I_i)=0$. Hence $T\lten_{\Pi}I_i\cong T\otimes _{\Pi}I_i\cong TI_i.$   By Lemma \ref{key lem} , we know that $T\lten_{\Pi}I_i=T I_i$ is a tilting $\Pi$-module and $\End_{\Pi}(T I_i)\cong 
\End_{\Pi}(T)$.
 
Since $ T$ and $TI_i$ are  tilting modules, we deduce that $T$ and $TI_i$ are  locally free by Proposition  \ref{tilting are free}. Consider  the short exact sequence
\[0\to TI_i\to T\to T/{TI_i}\to 0,\]  
which implies that $T/{TI_i}$ is also locally free and  $\opname{proj.dim} (T/TI_i)\leq 2$ by Lemma \ref{nondynkin equivalent}, one can show that  $\opname{proj.dim}TI_i\leq 1$.  This finishes the proof.
\end{proof}

\begin{definition}Let $C$ be a symmetrizable Cartan matrix with a symmetrizer $D$ which has no component of Dynkin type and $\Pi=\Pi(C, D)$ the associated preprojective algebra.
An ideal $I$ of $\Pi$ is said to be {\it cofinite tilting ({\it resp.} partial tilting) } if $\Pi/I$ has finite length and $I$ is
  tilting ({\it resp.} partial tilting) as left $\Pi$-module and as right $\Pi-$module. 
  \end{definition}
Denote  by $\langle I_1,I_2,\cdots,I_n\rangle$ the semigroup generated by $I_1,I_2,\cdots,I_n$.
  The following result is a direct consequence of  Proposition ~\ref{Ii is tilting} and Proposition ~\ref{chengji is tilting}.
 
 \begin{proposition}
 Each $T\in \langle I_1,I_2,\cdots,I_n\rangle$ is a  cofinite tilting ideal and satisfies $\End_{\Pi}(T) \cong \Pi$.
\end{proposition}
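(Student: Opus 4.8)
The plan is to argue by induction on the length $m$ of a word $T = I_{i_1}I_{i_2}\cdots I_{i_m}$ representing an element of $\langle I_1,\dots,I_n\rangle$. Since $T$ is a product of two-sided ideals it is automatically a two-sided ideal, so the substance is to show that $T$ is tilting on both sides, that $\Pi/T$ has finite length, and that $\End_\Pi(T)\cong\Pi$. I would carry along the slightly stronger inductive hypothesis that every such product is a tilting $\Pi$-module of projective dimension at most one with $\End_\Pi(T)\cong\Pi$ and with $\Pi/T$ of finite length. The base case $m=1$ is precisely Proposition~\ref{Ii is tilting}, while the short exact sequence~(\ref{exact sequcence Ii and ei}) identifies $\Pi/I_i\cong E_i$, which is uniserial of length $c_i$ and in particular of finite length.

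For the inductive step I would write $T=T'I_{i_m}$ with $T'=I_{i_1}\cdots I_{i_{m-1}}$, so that by induction $T'$ is a tilting $\Pi$-module of projective dimension at most one with $\End_\Pi(T')\cong\Pi$, and then apply Proposition~\ref{chengji is tilting} to $T'$ and $E_{i_m}$. If $T'\otimes_\Pi E_{i_m}=0$, then $T=T'I_{i_m}=T'$ and the assertion for $T$ coincides with that for $T'$. If $T'\otimes_\Pi E_{i_m}\neq 0$, the proposition gives directly that $T=T'I_{i_m}$ is again a tilting $\Pi$-module of projective dimension at most one, and the isomorphism $\End_\Pi(T'I_{i_m})\cong\End_\Pi(T')$ established in its proof yields $\End_\Pi(T)\cong\End_\Pi(T')\cong\Pi$. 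Finiteness of $\Pi/T$ I would read off from the filtration $\Pi\supseteq T'\supseteq T$: the quotient $\Pi/T'$ has finite length by induction, and applying $T'\otimes_\Pi-$ to~(\ref{exact sequcence Ii and ei}) identifies $T'/T'I_{i_m}$ with $T'\otimes_\Pi E_{i_m}$; tensoring a finite projective presentation of $T'$ with $E_{i_m}$ and using that $e_j\Pi\otimes_\Pi E_{i_m}$ is either $0$ or $E_{i_m}$ exhibits $T'\otimes_\Pi E_{i_m}$ as a quotient of a finite direct sum of copies of $E_{i_m}$, hence finite-dimensional. Therefore $\Pi/T$ has finite length.

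The step I expect to require the most care is passing from \emph{right} tilting to tilting \emph{on both sides}, since Propositions~\ref{Ii is tilting} and~\ref{chengji is tilting} are phrased for right modules. The cleanest route is to exploit the symmetry of the situation: as the definition of $\Pi$ is independent of the orientation and reversing all arrows interchanges $\Omega$ and $\Omega^{op}$, one obtains an isomorphism $\Pi\cong\Pi^{op}$ fixing the idempotents and hence preserving each $I_i$. Consequently the left-module analogues of the two propositions hold by the same proofs, and running the identical induction, now peeling generators off from the left as $T=I_{i_1}(I_{i_2}\cdots I_{i_m})$, shows that $T$ is also tilting as a left $\Pi$-module of projective dimension at most one. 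Alternatively, since $T$ is a tilting module, hence a tilting complex, with $\End_\Pi(T)\cong\Pi$, Rickard's theorem makes the bimodule ${}_\Pi T_\Pi$ a two-sided tilting complex, which is in particular a tilting complex over $\Pi^{op}$, i.e.\ a tilting left $\Pi$-module.

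Putting these together, $T$ is a two-sided ideal that is tilting on both sides, with $\Pi/T$ of finite length and $\End_\Pi(T)\cong\Pi$; that is, $T$ is a cofinite tilting ideal satisfying $\End_\Pi(T)\cong\Pi$, which completes the induction and the proof.
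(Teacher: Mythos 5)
Your proof is correct and is essentially the argument the paper intends: the paper states this proposition as a ``direct consequence'' of the two preceding propositions (that each $I_i$ is a tilting module of projective dimension at most one with endomorphism ring $\Pi$, and that multiplying a tilting module of projective dimension at most one by $I_i$ either fixes it or yields another such tilting module), and your induction on the word length is exactly the way to assemble them. The details you make explicit that the paper leaves implicit --- cofiniteness via the identification $T'/T'I_{i_m}\cong T'\otimes_\Pi E_{i_m}$, which is a quotient of finitely many copies of $E_{i_m}$ and hence finite-dimensional, and the left-module tilting property via the anti-isomorphism $\Pi\cong\Pi^{op}$ fixing the idempotents and hence each $I_i$ --- are handled correctly.
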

 On the other hand, we have
  
\begin{lemma}\label{property of cofinte ideal}
Let $I$ be a cofinite tilting ideal of $\Pi$, then  $\opname{proj.dim} I\leq 1$ and $\Hom_{\Pi}(I,\Pi)\cong\Hom_{\Pi}(\Pi,\Pi)\cong\Pi.$
\end{lemma}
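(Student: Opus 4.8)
The plan is to run everything off the short exact sequence
\[
0 \to I \to \Pi \to \Pi/I \to 0
\]
attached to the ideal $I$, combined with the homological dichotomy of Lemma~\ref{nondynkin equivalent} and the Calabi--Yau duality of Lemma~\ref{2-cy}. Since $I$ is in particular a tilting module, Proposition~\ref{tilting are free} already gives $\opname{proj.dim} I \le 2$, so the only real work is to sharpen this to $\le 1$ and then to compute $\Hom_{\Pi}(I,\Pi)$.

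For the projective dimension I would first control the finite-length quotient $\Pi/I$. As $\Pi$ is projective, the displayed sequence presents $I$ as the first syzygy of $\Pi/I$, so $\opname{proj.dim}(\Pi/I) \le \opname{proj.dim} I + 1 \le 3$. The point is that this already forces $\Pi/I$ to have \emph{finite} projective dimension, so Lemma~\ref{nondynkin equivalent} kicks in and yields both that $\Pi/I$ is locally free and, crucially, that $\opname{proj.dim}(\Pi/I) \le 2$. Feeding this sharper bound back into the long exact sequence for $\Ext^{\bullet}_{\Pi}(-,N)$ and using $\Ext^{k}_{\Pi}(\Pi,N)=0$ for $k\ge1$ gives $\Ext^2_{\Pi}(I,N)\cong\Ext^3_{\Pi}(\Pi/I,N)=0$ for every $N$, whence $\opname{proj.dim} I \le 1$. (The degenerate case $I=\Pi$, i.e. $\Pi/I=0$, is immediate.)

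For the second assertion I would apply $\Hom_{\Pi}(-,\Pi)$ to the same sequence and read off the four-term exact sequence
\[
0 \to \Hom_{\Pi}(\Pi/I,\Pi) \to \Hom_{\Pi}(\Pi,\Pi) \to \Hom_{\Pi}(I,\Pi) \to \Ext^1_{\Pi}(\Pi/I,\Pi) \to 0,
\]
where the projectivity of $\Pi$ kills $\Ext^1_{\Pi}(\Pi,\Pi)$. Since $\Hom_{\Pi}(\Pi,\Pi)\cong\Pi$, it suffices to annihilate the two outer terms, and this is exactly where knowing that $\Pi/I$ is locally free pays off: it places $\Pi/I$ in $\rep_{l.f.}(\Pi)$, so Lemma~\ref{2-cy}(b) gives $\Hom_{\Pi}(\Pi/I,\Pi)\cong\mathbb{D}\Ext^2_{\Pi}(\Pi,\Pi/I)=0$ and $\Ext^1_{\Pi}(\Pi/I,\Pi)\cong\mathbb{D}\Ext^1_{\Pi}(\Pi,\Pi/I)=0$, both vanishing because $\Pi$ is projective. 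The four-term sequence then collapses to $\Hom_{\Pi}(I,\Pi)\cong\Pi$.

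The main obstacle is precisely the bootstrap in the second paragraph. A priori $\Pi/I$ is merely a finite-length module, and such modules over $\Pi$ usually have \emph{infinite} projective dimension (for instance the simple $S_j$ with $c_j>1$ is not even locally free). What saves the argument is that $\Pi/I$ is the cokernel coming from a tilting ideal, so its projective dimension is bounded a priori by $\opname{proj.dim} I + 1$; the non-Dynkin equivalence of Lemma~\ref{nondynkin equivalent} then upgrades this finite bound to the sharp bound $\le 2$ and to local freeness, which is simultaneously what sharpens $\opname{proj.dim} I$ and what licenses the Calabi--Yau duality in the Hom computation. The one point to verify carefully is that Lemma~\ref{2-cy} is invoked only with the finite-dimensional locally free module $\Pi/I$ in the relevant slot, so that no issue arises from the infinite-dimensionality of $I$ and $\Pi$ themselves.
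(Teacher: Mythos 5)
Your proof is correct and follows essentially the same route as the paper's: both run off the short exact sequence $0\to I\to\Pi\to\Pi/I\to 0$, establish that $\Pi/I$ is locally free with $\opname{proj.dim}(\Pi/I)\le 2$ via Lemma~\ref{nondynkin equivalent} to deduce $\opname{proj.dim} I\le 1$, and then apply $\Hom_{\Pi}(-,\Pi)$ and the duality of Lemma~\ref{2-cy}(b), with the finite-length locally free module $\Pi/I$ in the correct slot, to kill $\Ext^{k}_{\Pi}(\Pi/I,\Pi)$ for $k=0,1$. The only cosmetic difference is that you obtain local freeness of $\Pi/I$ by bootstrapping through its finite projective dimension, whereas the paper gets it directly from the local freeness of the tilting module $I$ (Proposition~\ref{tilting are free}) combined with closure of locally free modules under cokernels of monomorphisms (Lemma~\ref{closed under extension}).
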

\begin{proof}
Consider the short exact sequence $0\to I\to \Pi\to \Pi/{I}\to 0.$
Note that  $I$ is an tilting module, which implies that $I$ is locally free by Proposition  \ref{tilting are free}.  Consequently,   $\Pi/I$ is locally free and $\opname{proj.dim} (\Pi/I)\leq 2$. Then it is easy to see that $\opname{proj.dim}  I\leq 1$.

Since  $\Pi/{I}$  is a locally free $\Pi$-module with finite length,  we clearly have $\Pi/{I}\in \rep_{l.f.}(\Pi)$. Therefore, $\Ext^{k}_{\Pi}( \Pi/{I},\Pi)$$ \cong
\mathbb{D}\Ext^{2-k}_{\Pi}(\Pi,\Pi/I)=0$
 for $k = 0, 1$ by Lemma \ref{2-cy}.  Applying
the functor $\Hom_{\Pi}(-,\Pi)$ to the above exact sequence, one obtains that $\Hom_{\Pi}(I, \Pi)\cong \Hom_{\Pi}(\Pi,\Pi)\cong \Pi$.
\end{proof}

\begin{proposition}
Each  cofinite partial tilting left or right ideal of $\Pi$ is a  cofinite tilting ideal and any   cofinite tilting ideal of $\Pi$ belongs to $ \langle I_1,I_2,\cdots,I_n\rangle$.
\end{proposition}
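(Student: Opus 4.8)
The plan is to split the statement into two implications and to treat each along the lines of~\cite[Chapter~II]{BIRS}, the novelty being that the generalized simple modules $E_i$ and the locally free condition must be fed into every step. The two recurring tools will be the $2$-Calabi--Yau duality of Lemma~\ref{2-cy} and the invertibility, in the derived Picard groupoid, of the bimodules $I_i$, which follows from Proposition~\ref{Ii is tilting} (giving $\End_{\Pi}(I_i)\cong\Pi$) together with Lemma~\ref{key lem}.

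For the first implication, let $I$ be an ideal that is cofinite and partial tilting as a right module. By Proposition~\ref{tilting are free} it is locally free of projective dimension at most one, so the sequence $0\to I\to\Pi\to\Pi/I\to 0$ and Lemma~\ref{closed under extension} show $\Pi/I\in\rep_{l.f.}(\Pi)$; the $2$-Calabi--Yau computation of Lemma~\ref{property of cofinte ideal} then gives $\Ext^k_{\Pi}(\Pi/I,\Pi)=0$ for $k=0,1$ and hence an isomorphism $\Hom_{\Pi}(I,\Pi)\cong\Pi$ realizing every map $I\to\Pi$ as a right multiplication. Because $I$ is an ideal we have $Ia\subseteq I$ for every $a\in\Pi$, so the map $\Hom_{\Pi}(I,\Pi)\to\Hom_{\Pi}(I,\Pi/I)$ induced by $\Pi\twoheadrightarrow\Pi/I$ vanishes, whence $\End_{\Pi}(I)\cong\Hom_{\Pi}(I,\Pi)\cong\Pi$. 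Now $\End_{\Pi}(I)\cong\Pi$ forces $I$ to have exactly $n$ non-isomorphic indecomposable summands, and a partial tilting module of projective dimension at most one with $\Ext^1_{\Pi}(I,I)=0$ and $n$ indecomposable summands is tilting; thus $I$ is a right tilting module with $\End_{\Pi}(I)\cong\Pi$. Finally, $I$ is simultaneously a bimodule and a right tilting complex with endomorphism algebra $\Pi$, so Lemma~\ref{key lem} identifies it with a two-sided tilting complex concentrated in degree zero; in particular $I$ is tilting as a left module, and $I$ is a cofinite tilting ideal. The left-ideal hypothesis is handled symmetrically.

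For the second implication I would induct on $\dim_K(\Pi/I)$. The case $I=\Pi$ is the empty product. If $I\neq\Pi$, then $\Pi/I$ is a nonzero finite-length locally free module, and the first step is to produce a vertex $i$ with $\Pi/I\twoheadrightarrow E_i$, equivalently $I\subseteq I_i$: such an $E_i$ appears as the top quotient of a filtration of $\Pi/I$ by generalized simple modules. Granting this, I ``divide by $I_i$'': set $I':=I_i^{\vee}\lten_{\Pi}I$ with $I_i^{\vee}=\RHom_{\Pi}(I_i,\Pi)$ the inverse two-sided tilting complex. Invertibility gives $I_i\lten_{\Pi}I'\cong I$, and the goal is to verify that $I'$ is again an honest cofinite tilting ideal with $\dim_K(\Pi/I')=\dim_K(\Pi/I)-c_i$. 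Once this is known, the inductive hypothesis expresses $I'$ as a product of the $I_j$, and $I=I_iI'$ completes the step; combined with the first implication, every one-sided cofinite partial tilting ideal lies in $\langle I_1,\cdots,I_n\rangle$ as well.

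The main obstacle is exactly this factorization, and it is where the generalized simple modules enter in a way that has no counterpart in~\cite{BIRS}. Concretely, one must (i) guarantee the descent vertex $i$ by controlling the top of the locally free module $\Pi/I$ through the resolution~(\ref{resolution of E_I nondynkin case}) and the vanishing $\Hom_{\Pi}(I_i,E_i)=0$; and (ii) show that the derived division $I_i^{\vee}\lten_{\Pi}I$ has no higher homology --- so that $I'$ is a genuine module rather than a complex --- that it is realized as a two-sided ideal with $I_iI'=I$, and that the quotient $I'/I$ has length precisely the contribution $c_i$ coming from $E_i$. Both points rest on staying inside $\rep_{l.f.}(\Pi)$ and on the $2$-Calabi--Yau symmetry of Lemma~\ref{2-cy}; once the length drops by $c_i$ at each stage, the induction on $\dim_K(\Pi/I)$ terminates. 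The remaining checks --- the summand count in the first implication and the compatibility of endomorphism algebras along the induction --- are routine.
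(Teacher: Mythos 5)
Your overall strategy---peel off one generalized simple module at a time and ``divide'' by $I_i$ using the bimodule structure, Lemma~\ref{key lem} and the duality of Lemma~\ref{2-cy}---is the paper's strategy, but at two points you substitute steps that do not go through. First, your standalone proof of the implication ``cofinite partial tilting $\Rightarrow$ tilting'' rests on a Bongartz-type principle: from $\End_{\Pi}(I)\cong\Pi$ you conclude that $I$ has exactly $n$ non-isomorphic indecomposable summands, and then that a partial tilting module of projective dimension at most one with $n$ summands is tilting. In this section $C$ has no Dynkin component, so $\Pi$ is infinite-dimensional, and neither the Krull--Schmidt property needed for the summand count nor Bongartz completion is available (or established anywhere in the paper) in this setting. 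The paper never proves this implication directly: it runs the induction on cofinite \emph{partial} tilting one-sided ideals themselves, producing at each step another cofinite partial tilting ideal $U$ with $T=UI_i$, and tilting-ness falls out at the end as a byproduct of membership in $\langle I_1,\cdots,I_n\rangle$, whose elements are already known to be cofinite tilting ideals with endomorphism ring $\Pi$ (Propositions~\ref{Ii is tilting} and~\ref{chengji is tilting}). If you restructure this way, your first paragraph becomes unnecessary, and its gap disappears.

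Second, the inductive step is exactly where the work lies, and your sketch both defers it and misstates two of its ingredients. (i) You choose $E_i$ as a \emph{top quotient} of $\Pi/I$ ``from a filtration by generalized simples''; but finite-length locally free modules are not known to be filtered by the $E_j$, so this is unjustified. The paper instead takes $E_i$ to be a generalized simple \emph{submodule} of $\Pi/T$, and this socle-side choice drives all the bookkeeping: applying $\Hom_{\Pi}(E_i,-)$ to $0\to T\to\Pi\to\Pi/T\to0$ gives $\Hom_{\Pi}(E_i,T)=0$ and $\Ext^1_{\Pi}(E_i,T)\cong\Hom_{\Pi}(E_i,\Pi/T)\neq0$, whence $T\ten_{\Pi}\mathbb{D}E_i=0$ by Lemma~\ref{lem of T times E}; this vanishing then yields $\Ext^1_{\Pi}(I_i,T)\cong\Ext^2_{\Pi}(E_i,T)\cong\mathbb{D}\Hom_{\Pi}(T,E_i)=0$, so the division $U:=\RHom_{\Pi}(I_i,T)\cong\Hom_{\Pi}(I_i,T)$ is automatically concentrated in degree zero (no inverse two-sided tilting complex $I_i^{\vee}$ is needed), is identified with an ideal via $\Hom_{\Pi}(I_i,T)\subseteq\Hom_{\Pi}(I_i,\Pi)\cong\Pi$ (Lemma~\ref{property of cofinte ideal}), satisfies $U/T\cong E_i^{\,l}$ by local freeness, and finally $T=UI_i$ since $E_iI_i=0$ and $TI_i=T$. (ii) Your claim that the length drops by exactly $c_i$ is false in general, since $l$ may exceed $1$; the drop is $lc_i$, which still terminates the induction (the paper inducts on the rank length of $\Pi/T$). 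So the skeleton of your second implication is the right one, but the items you list as the ``main obstacle'' constitute the entire proof, and the specific devices you propose for them (top quotient, derived inverse, drop of $c_i$) must be replaced by the submodule/$\Hom_{\Pi}(I_i,T)$ versions for the argument to close.
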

\begin{proof}
Let $ T$ be a  cofinite partial tilting right ideal of $\Pi$. By Lemma \ref{tilting are free}, $T$ is locally free.   If $T \neq \Pi$,  consider the locally free $\Pi$-module $\Pi/T$, which  has a generalized simple submodule, say $E_i$. Recall that we have $\Hom_{\Pi}(E_i,\Pi)=0=\Ext^1_{\Pi}(E_i,\Pi)$. By applying the functor $\Hom_{\Pi}(E_i,-)$ to the short exact sequence
\[0\to T\to \Pi\to \Pi/T\to 0,
\]
one obtains $\Hom_{\Pi}(E_i, T)=0$ and $\Ext^1_{\Pi}(E_i,T)\cong \Hom_{\Pi}(E_i, \Pi/T)\neq 0$. Consequently, $\Tor^{\Pi}_{1}(T, \mathbb{D}E_i)\cong D\Ext^1_{\Pi}(E_i, T)\neq 0$ and $T\otimes_{\Pi}\mathbb{D}E_i=0$ by Lemma~\ref{lem of T times E}.

Now put $U = \RHom_{\Pi}(I_i, T)$.  By Lemma \ref{key lem}, we deduce that $U\cong T\lten_{\Pi}\RHom_{\Pi}(I_i, T) $ is a
partial tilting complex of $\Pi$. Note that we have $\opname{proj.dim}  I_i~\leq 1$, it is easy to see that  \[\Ext^1_{\Pi}(I_i, T) \cong \Ext^2_{\Pi}(E_i, T)\cong \mathbb{D}\Hom_{\Pi}(T, E_i)\cong T\otimes_{\Pi}\mathbb{D}E_i=0.\]
In particular, $U \cong \Hom_{\Pi}(I_i, T)$ is a partial tilting $\Pi$-module. Moreover, by $U\cong\Hom_{\Pi}(I_i,T)\subset \Hom_{\Pi}(I_i,\Pi)\cong \Pi$, we may identify $U$ as a partial tilting right ideal of $\Pi$.

Now applying $\Hom_{\Pi}(-,T)$ to the  exact sequence of $\Pi$-bimodules
\[0\to I_i\to\Pi\to E_i\to 0\]
yields the following exact sequence of right $\Pi$-modules
\[0=\Hom_{\Pi}(E_i,T)\to \Hom_{\Pi}(\Pi,T)\to \Hom_{\Pi}(I_i,T)\to \Ext^1_{\Pi}(E_i,T)\to \Ext^1_{\Pi}(E_i,\Pi)=0.\]
In particular, we have \[0\to T\to U\to  \Ext^1_{\Pi}(E_i,T)\to 0. \]
Hence $U/T\cong  \Ext^1_{\Pi}(E_i,T)\cong \Hom_{\Pi}(E_i,\Pi/T)\neq 0$. For $j\neq i$, it is not hard to see that $\Hom_{\Pi}(e_j\Pi, U/T)=0$.
 Note that $U/T$ is also locally free,  we conclude that  $U/T$ is a direct sum of  copies of $E_i$, say $U/T\cong E_i^l$.  We can rewrite the short exact sequence as \[0\to T\to U\to E_i^{l}\to 0\]
and obtain an exact sequence \[0\to TI_i\to UI_i\to E_i^lI_i\to 0.\]
Since $E_iI_i=E_i\Pi(1-e_i)\Pi=E_i(1-e_i)\Pi=0$ and  $TI_i=T$ because of  $T\otimes_{\Pi}\mathbb{D}E_i=0$, we have $T=UI_i.$
Thus $T\in \langle I_1,I_2,\cdots,I_n \rangle$ by induction on the rank length of locally free module $\Pi/T$.
\end{proof}

\begin{proposition}
Let   $U,T\in \langle I_1,I_2,\cdots,I_n\rangle$ be two cofinite tilting ideals such that $U$ is isomorphic to $T$ as left modules or as right modules, then $T=U.$
\end{proposition}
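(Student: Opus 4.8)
The plan is to show that an isomorphism of one-sided modules between $T$ and $U$ is necessarily realized by multiplication by a single element of $\Pi$, and then to exploit that $T$ and $U$ are two-sided ideals. First I would treat the case where $U$ is isomorphic to $T$ as right $\Pi$-modules, the left-module case being entirely symmetric (interchanging left and right multiplication throughout). So fix an isomorphism $\phi\colon T\to U$ of right $\Pi$-modules and write $\iota_T\colon T\hookrightarrow\Pi$ and $\iota_U\colon U\hookrightarrow\Pi$ for the inclusions. Recall that both $T$ and $U$, being elements of $\langle I_1,\dots,I_n\rangle$, are cofinite tilting ideals, so the earlier results apply to each of them.

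The key input is Lemma~\ref{property of cofinte ideal}: for a cofinite tilting ideal, restriction along the inclusion induces an isomorphism $\Hom_{\Pi}(\Pi,\Pi)\xrightarrow{\sim}\Hom_{\Pi}(T,\Pi)$, and likewise for $U$. Since every right-module endomorphism of $\Pi$ is left multiplication $\lambda_a\colon x\mapsto ax$ by a unique $a\in\Pi$, this says precisely that every right-module homomorphism $T\to\Pi$ is the corestriction $\lambda_a|_T$ of left multiplication by a unique $a\in\Pi$. Applying this to $\psi:=\iota_U\circ\phi\colon T\to\Pi$ produces an element $a\in\Pi$ with $\psi(t)=at$ for all $t\in T$; as $\phi$ is onto $U$, taking images gives $U=aT$. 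Applying the same statement to $\psi':=\iota_T\circ\phi^{-1}\colon U\to\Pi$ produces $b\in\Pi$ with $T=bU$.

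It then remains to invoke that $T$ and $U$ are two-sided ideals: as left ideals they absorb left multiplication, so $U=aT\subseteq\Pi T\subseteq T$ and $T=bU\subseteq\Pi U\subseteq U$, whence $T=U$. In the left-module case one argues identically with right multiplications $\rho_a\colon x\mapsto xa$, obtaining $U=Ta$ and $T=Ub$, and concludes using that $T$ and $U$ are right ideals. Note that this argument does not require analyzing the units of $\Pi$ (which are nontrivial, e.g. $1+\varepsilon_1$ in Example~\ref{eg1}): the two-sided ideal property turns the two one-sided inclusions into the desired equality directly.

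I expect the only genuine subtlety, and hence the step deserving the most care, to be the identification of $\Hom_{\Pi}(T,\Pi)$ with corestrictions of left multiplications, i.e. that the abstract isomorphism $\Hom_{\Pi}(T,\Pi)\cong\Pi$ is realized concretely by $a\mapsto\lambda_a|_T$. This is exactly what forces the module isomorphism $\phi$ to descend to an element of the ring, and it rests on the $2$-Calabi--Yau vanishing $\Hom_{\Pi}(\Pi/T,\Pi)=\Ext^1_{\Pi}(\Pi/T,\Pi)=0$ supplied by Lemma~\ref{2-cy} (used already in Lemma~\ref{property of cofinte ideal}). Once multiplication by a ring element is in hand, the conclusion is immediate.
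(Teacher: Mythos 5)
Your proof is correct and takes essentially the same route as the paper: both arguments hinge on Lemma~\ref{property of cofinte ideal}, which (via $\Hom_{\Pi}(\Pi/T,\Pi)=0=\Ext^1_{\Pi}(\Pi/T,\Pi)$ from Lemma~\ref{2-cy}) realizes every right-module map $T\to\Pi$ as the restriction of a left multiplication, thereby forcing the abstract isomorphism to come from a ring element. The only divergence is your pleasant shortcut at the end: where the paper extends $f$, $f^{-1}$ and their composites to show the multiplier is an invertible element $x$ with $T=xU=U$, you avoid discussing units altogether by producing $a,b\in\Pi$ with $U=aT\subseteq T$ and $T=bU\subseteq U$ and letting the two-sided ideal property close the loop.
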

\begin{proof} Assume that $U$ is isomorphic to $T$ as right $\Pi$-modules.
 Denote the isomorphisms by $f:U\to T$ and $g=f^{-1}:T\to U$. By Lemma \ref{property of cofinte ideal}, $f,g,fg,gf$ can be extended to morphisms from $\Pi$ to $\Pi$, still denoted by $f,g,fg,gf.$ Since $(fg)|_{T}=id_T,(gf)|_{U}=id_U$, then $fg=id_{\Pi}=gf$. Thus there exists an invertible element $x\in \Pi$ such that $f$ is the left multiplication with $x.$  Consequently, we have $T= f (U ) = xU=U.$ 
\end{proof}

In sum, we have proved the main result of this section.
\begin{theorem}
 Let $C$ be a symmetrizable Cartan matrix with a symmetrizer $D$ which has no component of Dynkin type and $\Pi=\Pi(C, D)$ the associated preprojective algebra.
 There is a bijection between  the set of  all cofinite tilting $\Pi$-ideals and the semigroup $\langle I_1,I_2,\cdots,I_n\rangle$. \hfill{$\Box$}
\end{theorem}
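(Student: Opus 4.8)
The plan is to show that the two sets in the statement coincide as subsets of the collection of two-sided ideals of $\Pi$, so that the asserted bijection is simply the identity (inclusion) map; all of the genuine content has already been assembled in the preceding results. Concretely, I would verify the two inclusions
\[\langle I_1,I_2,\cdots,I_n\rangle\ \subseteq\ \{\text{cofinite tilting }\Pi\text{-ideals}\}\ \subseteq\ \langle I_1,I_2,\cdots,I_n\rangle\]
separately and conclude equality.

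For the first inclusion I would invoke the Proposition asserting that every $T\in\langle I_1,I_2,\cdots,I_n\rangle$ is a cofinite tilting ideal with $\End_\Pi(T)\cong\Pi$; this is itself a consequence of Proposition~\ref{Ii is tilting} (each $I_i$ is tilting of projective dimension at most one with $\End_\Pi(I_i)\cong\Pi$) together with the closure statement of Proposition~\ref{chengji is tilting} (each product $TI_i$ is again tilting of projective dimension at most one). For the reverse inclusion I would appeal to the final clause of the Proposition on cofinite partial tilting ideals, which proves by induction on the rank length of the locally free module $\Pi/T$ that an arbitrary cofinite tilting ideal $T$ can be written as a product $U I_i$ and hence lies in the semigroup. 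Since both inclusions hold, the two sets are literally equal and the identity map is the desired bijection.

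The point to be careful about is that \emph{bijection} here is a statement about sets of ideals rather than about the abstract semigroup structure: distinct words in the generators may define the same ideal, but this causes no difficulty, since we are merely comparing two descriptions of the \emph{same} underlying set, so no separate injectivity argument is needed at this level. The genuine obstacles all lay in the earlier steps — in particular in showing that the product $TI_i$ remains tilting of projective dimension at most one, which rested on the $2$-Calabi--Yau type duality of Lemma~\ref{2-cy} and the characterization of locally free modules by finiteness of projective dimension in the non-Dynkin case (Lemma~\ref{nondynkin equivalent}). I would also note that the rigidity statement that isomorphic semigroup elements must in fact be equal (the last Proposition of the section) is what will make the further identification with the Weyl group $W(C)$ in the next section clean, even though it is not strictly required for the present set-level bijection.
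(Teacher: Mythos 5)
Your proposal is correct and matches the paper's own argument: the theorem is stated there with an empty proof precisely because the two preceding propositions (every element of $\langle I_1,\dots,I_n\rangle$ is a cofinite tilting ideal, and every cofinite tilting ideal lies in the semigroup) establish the two inclusions, so the bijection is the identity on a common set of ideals. Your observation that the final rigidity proposition (isomorphic ideals in the semigroup are equal) is not needed for this set-level statement but serves the later Weyl group bijection also agrees with the paper's structure.
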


\section{Weyl Group and ideal semigroup}~\label{S:Weyl-group-and-ideal-semigroup}

As before, let $C = (c_{ij})$ be a symmetrizable Cartan matrix.
Denote by  $W=W(C)$  the Coxeter group generated by  $\{s_i~|~1\leq i\leq n\}$ with the following relations:

$(1)$ $s_i^2=id$;

$(2)$ $s_is_j = s_js_i$, if $c_{ij}c_{ji}=0$;

$(3)$ $s_is_js_i = s_js_is_j$, if $c_{ij}c_{ji}=1$;

$(4)$ $s_is_js_is_j = s_js_is_js_i$, if $c_{ij}c_{ji}=2$;

$(5)$ $s_is_js_is_js_is_j = s_js_is_js_is_js_i$, if $c_{ij}c_{ji}=3$.

By~\cite[Thm 16.17]{Ca},  $W$ is just the Weyl group of the Kac-Moody Lie algebra associated to the symmetrizable  Cartan matrix $C$.

For a word  $w=s_{i_1}s_{i_2}\cdots s_{i_l}\in W$, it is obvious that  the following operations on $w$ keep the word $w$ unchanged.

$(i)$ remove $s_is_i$;

$(ii)$ replace $s_is_j $ by $s_js_i$, if $c_{ij}c_{ji}=0$;

$(iii)$  replace $s_is_js_i $ by $ s_js_is_j$, if $c_{ij}c_{ji}=1$;

$(iv)$  replace $s_is_js_is_j $ by $ s_js_is_js_i$, if $c_{ij}c_{ji}=2$;

$(v)$  replace $s_is_js_is_js_is_j $ by $ s_js_is_js_is_js_i$, if $c_{ij}c_{ji}=3$.

Here we call the operation $(i)$ {\it nil-move} and the operations $(ii),(iii),(iv),(v)$ {\it braid-moves}.

The following result for Coxeter group is well-known ({\it cf.} Theorem 3.3.1 of ~\cite{BB}).
\begin{lemma}~\label{Word Property}
Let $W$ be a Coxeter group and $w\in W.$

$(a)$ Any expression $s_{i_1}s_{i_2}\cdots s_{i_l}$ for $w$ can be transformed into a reduced expression for $w$ by a sequence of nil-moves and braid-moves.

$(b)$ Every two reduced expressions for $w$ can be connected via a sequence of braid-moves.
\end{lemma}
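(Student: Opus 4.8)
The statement is the classical Tits--Matsumoto word property, so my plan is to deduce both parts from a single structural input, the \emph{Exchange Condition}, which I would in turn extract from the reflection (geometric) representation underlying the description of $W$ as the Weyl group of the Kac--Moody algebra of $C$. Concretely, $W$ acts on the real span $V$ of the simple roots $\alpha_1,\dots,\alpha_n$ with each $s_i$ the simple reflection in $\alpha_i$; writing the root system as $\Phi=\Phi^+\sqcup\Phi^-$, I would define $\ell(w)$ to be the cardinality of the inversion set $\{\alpha\in\Phi^+:w(\alpha)\in\Phi^-\}$, verify that it equals the minimal number of generators in any expression for $w$, and record that $\ell(ws_i)=\ell(w)\pm1$ according to the sign of $w(\alpha_i)$. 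A \emph{reduced} expression is then one of length $\ell(w)$. From the behaviour of roots under one-letter multiplication I would establish the Exchange Condition: if $s_{i_1}\cdots s_{i_l}$ is reduced and $\ell(s_{i_1}\cdots s_{i_l}s_j)<l$, then $s_{i_1}\cdots s_{i_l}s_j=s_{i_1}\cdots\widehat{s_{i_k}}\cdots s_{i_l}$ for some index $k$.

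I would prove $(b)$ first, by induction on $\ell(w)$. Given two reduced expressions of $w$ with first letters $s_a$ and $s_b$: if $s_a=s_b$, cancel the common first letter, apply the inductive hypothesis to the shorter element $s_aw$, and prepend $s_a$. If $s_a\neq s_b$, the crucial sub-lemma is that, whenever both $s_a$ and $s_b$ are left descents of $w$ (which they are, being leading letters of reduced words), the product $s_as_b$ has finite order $m=m_{ab}$ and $w$ admits a reduced expression beginning with the alternating braid word $\underbrace{s_as_bs_a\cdots}_{m}$. Granting this, the braid relation $\underbrace{s_as_b\cdots}_{m}=\underbrace{s_bs_a\cdots}_{m}$ lets me rewrite the leading block, and repeated use of the $s_a=s_b$ case (peeling off matched first letters) connects both given expressions to this common normal form; hence they are braid-equivalent. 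This argument uses only braid-moves, yielding the sharper claim that no nil-moves are ever needed between reduced words.

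Part $(a)$ then follows by induction on the length $l$ of the given expression $s_{i_1}\cdots s_{i_l}$ for $w$. If $l=\ell(w)$ the word is already reduced; otherwise choose $k$ maximal with $s_{i_1}\cdots s_{i_k}$ reduced, so that $s_{i_1}\cdots s_{i_{k+1}}$ is non-reduced and $s_{i_{k+1}}$ is a right descent of $u:=s_{i_1}\cdots s_{i_k}$. By the Exchange Condition $u$ has a reduced expression ending in $s_{i_{k+1}}$, and by part $(b)$ the prefix $s_{i_1}\cdots s_{i_k}$ can be carried to such an expression using braid-moves alone. Performing these moves on the prefix produces the adjacent pair $s_{i_{k+1}}s_{i_{k+1}}$, which a single nil-move deletes, strictly decreasing $l$; the inductive hypothesis then completes the reduction. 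The main obstacle is the dihedral sub-lemma in $(b)$ — that two distinct left descents force a full alternating braid word as a reduced prefix — which I would settle by analysing the rank-two subsystem spanned by $\alpha_a,\alpha_b$ and the action of $w$ on $\{\alpha_a,\alpha_b\}$, or equivalently by iterating the Exchange Condition inside the dihedral parabolic subgroup $\langle s_a,s_b\rangle$.
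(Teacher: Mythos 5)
Your proposal is correct, but there is nothing in the paper to compare it against line by line: the paper does not prove this lemma at all, citing it as Theorem 3.3.1 of Bj\"orner--Brenti. What you have written is essentially the standard Tits--Matsumoto argument found in that reference (and in Humphreys): length via inversion sets in the reflection representation, the Exchange Condition, the induction for $(b)$ hinging on the dihedral sub-lemma that two distinct left descents $s_a,s_b$ of $w$ force $m_{ab}<\infty$ and a reduced prefix equal to the full alternating word --- which you correctly reduce to the parabolic decomposition $w=u\cdot v$ with $u\in W_{\{a,b\}}$, $\ell(w)=\ell(u)+\ell(v)$, and the fact that in the infinite dihedral group every element has at most one left descent --- and then $(a)$ from $(b)$ plus Exchange by locating the first non-reduced prefix and cancelling $s_{i_{k+1}}s_{i_{k+1}}$ with a single nil-move. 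Two small points to tidy in a full write-up: the lemma is stated for an arbitrary Coxeter group, so you should run your root-system argument in the standard Tits geometric representation (with the bilinear form built from the exponents $m_{ij}$) rather than literally in the Kac--Moody root system of $C$, although the mechanism is identical and the paper only ever applies the lemma to $W(C)$; and your sharper observation that part $(b)$ uses braid-moves only is exactly what the statement asserts, so it is a feature, not an extravagance. No gaps.
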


Let $V^*$  be a vector space with basis $\alpha_1^*, \alpha_2^*,\cdots, \alpha_n^*$, define the linear transformation $\sigma_i^*\in GL{(V^*)}$ by $\sigma_i^*(p)=p-p_i(\sum\limits_{j=1}^nc_{ji}\alpha_j^*)$ for  $p=\sum\limits_{j=1}^np_j\alpha_j^*$. Obviously, we have
\[\sigma_i^*(\alpha_l^*)=\begin{cases}\alpha_i^*-\sum\limits_{j=1}^n c_{ji}\alpha_j^* & if\ l=i;  \\ \alpha_l^* & if\ l\neq i.\end{cases}\]

The following lemma gives a geometric realization of $W$({\it cf.} Theorem 4.2.7 of ~\cite{BB}).
\begin{lemma}\label{BB}
The mapping $s_i\mapsto \sigma_i^*$ for $i=1,2,\cdots,n$, extends uniquely to an injective  homomorphism $\varphi:W\to GL{(V^*)}$.
\end{lemma}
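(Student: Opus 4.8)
The plan is to build $\varphi$ from the universal property of the Coxeter presentation of $W$: since $W$ is presented by the generators $s_1,\dots,s_n$ together with the relations $(1)$--$(5)$, to produce a homomorphism $\varphi\colon W\to GL(V^*)$ with $\varphi(s_i)=\sigma_i^*$ it suffices to verify that the $\sigma_i^*$ satisfy exactly those relations; uniqueness is then automatic because the $s_i$ generate $W$. Writing $\beta_i:=\sum_{k}c_{ki}\alpha_k^*$, the defining formula reads $\sigma_i^*(\alpha_l^*)=\alpha_l^*$ for $l\neq i$ and $\sigma_i^*(\alpha_i^*)=\alpha_i^*-\beta_i$; a short computation gives $\sigma_i^*(\beta_i)=-\beta_i$, and since $\beta_i=2\alpha_i^*+(\text{a vector in }\langle\alpha_l^*\mid l\neq i\rangle)$ lies outside the hyperplane $H_i:=\langle\alpha_l^*\mid l\neq i\rangle$, the map $\sigma_i^*$ is the reflection fixing $H_i$ pointwise and negating $\beta_i$. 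Relation $(1)$, namely $(\sigma_i^*)^2=\mathrm{id}$, is then immediate.

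For the braid relations $(2)$--$(5)$ I would fix a pair $i\neq j$ and exhibit a convenient $\langle\sigma_i^*,\sigma_j^*\rangle$-stable decomposition of $V^*$. A direct computation gives $\sigma_i^*(\beta_j)=\beta_j-c_{ij}\beta_i$, so the plane $P:=\langle\beta_i,\beta_j\rangle$ is preserved by both $\sigma_i^*$ and $\sigma_j^*$, while $U:=\langle\alpha_l^*\mid l\neq i,j\rangle$ is fixed pointwise by both. In cases $(2)$--$(5)$ one has $c_{ij}c_{ji}\in\{0,1,2,3\}$, so the matrix $\left(\begin{smallmatrix}2&c_{ij}\\ c_{ji}&2\end{smallmatrix}\right)$ has determinant $4-c_{ij}c_{ji}>0$; this shows simultaneously that $\beta_i,\beta_j$ are independent and that $P\cap U=0$, whence $V^*=P\oplus U$ as $\langle\sigma_i^*,\sigma_j^*\rangle$-modules. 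On $U$ both generators act trivially, and on $P$, in the basis $(\beta_i,\beta_j)$, the product $\sigma_i^*\sigma_j^*$ has matrix $\left(\begin{smallmatrix}c_{ij}c_{ji}-1&c_{ij}\\ -c_{ji}&-1\end{smallmatrix}\right)$, of determinant $1$ and trace $c_{ij}c_{ji}-2$. Its order is therefore $2,3,4,6$ according as $c_{ij}c_{ji}=0,1,2,3$, which is precisely the order prescribed by $(2)$--$(5)$; combined with the trivial action on $U$ this yields $(\sigma_i^*\sigma_j^*)^{m_{ij}}=\mathrm{id}$ on all of $V^*$, where $m_{ij}$ denotes the order of $s_is_j$. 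The point requiring care here is exactly that the braid relation must hold on the whole of $V^*$ and not merely on $P$ or on the quotient $V^*/P$: having the genuine invariant complement $U$, on which the group acts as the identity, is what excludes a spurious unipotent contribution.

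It remains to prove that $\varphi$ is injective, which I regard as the main obstacle. The representation $\varphi$ is a reflection representation of the Coxeter system $(W,\{s_1,\dots,s_n\})$ on $V^*$, with the $\sigma_i^*$ as simple reflections and the $\alpha_i^*$ playing the role of fundamental weights for the Kac--Moody datum $C$. Faithfulness is then the classical fact that such a reflection representation is faithful, proved by tracking the sign of roots along reduced words: one shows that $\ell(ws_i)>\ell(w)$ if and only if $\varphi(w)$ keeps the positive root $\beta_i$ positive, so that a nontrivial $w$, read off from any reduced expression, must send some simple root to a negative one and hence cannot act as the identity. I would invoke this in the form of~\cite[Theorem~4.2.7]{BB} (see also~\cite[Chapter~16]{Ca}), noting that the argument uses only the combinatorial reflection structure and the length function of $W$, and not the particular bilinear form, so that it applies to $\varphi$ as it stands. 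This gives $\ker\varphi=\{\mathrm{id}\}$ and completes the proof.
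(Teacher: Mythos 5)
The paper offers no proof of this lemma at all: it is quoted from the literature with the citation \emph{cf.}~\cite[Theorem 4.2.7]{BB}, so your proposal is being measured against the standard argument rather than against anything in the text. Your verification of the Coxeter relations is essentially correct and complete: the decomposition $V^*=P\oplus U$ is valid because $\det\left(\begin{smallmatrix}2&c_{ij}\\ c_{ji}&2\end{smallmatrix}\right)=4-c_{ij}c_{ji}>0$ in all cases $(2)$--$(5)$, and your matrix for $\sigma_i^*\sigma_j^*$ on $P$ is right. One small caveat: in the case $c_{ij}c_{ji}=0$ the invariants ``determinant $1$, trace $-2$'' alone do \emph{not} force order $2$, since $\left(\begin{smallmatrix}-1&1\\0&-1\end{smallmatrix}\right)$ has the same invariants and infinite order; you need condition $(C_3)$ of the definition of a Cartan matrix ($c_{ij}=0$ if and only if $c_{ji}=0$) to see that your matrix is exactly $-I$. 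In the cases $c_{ij}c_{ji}\in\{1,2,3\}$ the eigenvalues are distinct, so the trace--determinant argument is fine there.

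The genuine gap is in the injectivity step. You track the signs of the vectors $\beta_i$ inside $V^*$, treating them as simple roots, and assert that the classical faithfulness argument ``applies to $\varphi$ as it stands.'' It does not: the sign-of-roots induction requires the simple roots to be linearly independent, so that every root is an unambiguously nonnegative or nonpositive combination of them. But your $\beta_i$ are the \emph{columns of $C$} written in the basis $(\alpha_j^*)$, and these are linearly dependent whenever $\det C=0$, i.e., in every affine type --- precisely the non-Dynkin cases Theorem~\ref{bijection w and I nondynkin} needs (its Dynkin branch even passes to the extended Dynkin matrix $\widetilde{C}$, which is singular). Concretely, for $C=\left(\begin{smallmatrix}2&-2\\-2&2\end{smallmatrix}\right)$ one has $\beta_2=-\beta_1$, so the phrase ``$\varphi(w)$ keeps the positive root $\beta_2$ positive'' is not even well defined. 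The standard repair is to transpose: in the basis $(\alpha_k^*)$ the matrix of $\sigma_i^*$ is $M_i=I-(Ce_i)e_i^{\mathsf{T}}$, whose transpose acts on $V=\bigoplus_l K\alpha_l$ by $\alpha_l\mapsto \alpha_l-c_{li}\alpha_i$ --- the standard reflection representation of $W$ on the root lattice (of $C^{\mathsf{T}}$, which has the same Weyl group since $m_{ij}$ depends only on $c_{ij}c_{ji}$), where the simple roots \emph{are} a basis by construction. Transposing your already-verified relations shows $s_i\mapsto M_i^{\mathsf{T}}$ also extends to a homomorphism $\rho$, with $\varphi(w)=\rho(w^{-1})^{\mathsf{T}}$; the classical length/positive-root lemma (this is the content of \cite[Thm 16.17]{Ca}, or the argument you cite from \cite{BB} run on the root side) gives that $\rho$ is faithful, and hence so is $\varphi$. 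With that detour your proof closes; without it, the final step fails exactly in the singular cases the paper relies on.
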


In the following, we are going to discuss the relationship between the Weyl group and the semigroup $\langle I_1,I_2,\cdots,I_n\rangle$. First, let us deal some special cases. Since the preprojective algebras $\Pi(C,D)$ does not depend on the orientation $\Omega$ of $C$,  we omit the orientation in the following.
\begin{proposition}\label{prop of preprojective of type A2}
Let $C=\left(\begin{array}{cc}2&-1\\-1&2 \end{array}\right)$ and $D=\opname{diag}(d,d)$ with positive integer $d$. The preprojective algebra $\Pi=\Pi(C,D)$ is given by the quiver
\[\xymatrix{
\ar@(ul,dl)_{\varepsilon_1}1\ar[rr]<1mm>^{a_{21}}&&\ar@(ur,dr)^{\varepsilon_2}2\ar[ll]^{a_{12}}
}
\]
with relations $\varepsilon_1^{d}=0, \varepsilon_2^{d}=0, \varepsilon_1a_{12}=a_{12}\varepsilon_2,\varepsilon_2a_{21}=a_{21}\varepsilon_1, a_{12}a_{21}=a_{21}a_{12}=0 $,
then $I_1I_2I_1=0=I_2I_1I_2$.
\end{proposition}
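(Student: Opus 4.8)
The plan is to reduce the vanishing of the two triple products to an explicit computation inside the Peirce components of $\Pi$. Since there are only two vertices, $1-e_1=e_2$ and $1-e_2=e_1$, so $I_1=\Pi e_2\Pi$ and $I_2=\Pi e_1\Pi$. Expanding the products and using $\Pi\Pi=\Pi$ gives $I_1I_2I_1=\Pi e_2\Pi e_1\Pi e_2\Pi$ and $I_2I_1I_2=\Pi e_1\Pi e_2\Pi e_1\Pi$. Writing a spanning monomial of $I_1I_2I_1$ as $x e_2 a e_1 b e_2 y$ and noting $e_2 a e_1 b e_2=(e_2 a e_1)(e_1 b e_2)$, I see via the Peirce decomposition $\Pi=\bigoplus_{i,j}e_i\Pi e_j$ that it suffices to prove the two ``bridge'' identities $(e_2\Pi e_1)(e_1\Pi e_2)=0$ and $(e_1\Pi e_2)(e_2\Pi e_1)=0$; the surrounding factors of $\Pi$ then annihilate the entire triple product.

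First I would determine the off-diagonal Peirce components explicitly. Using the commutativity relations $\varepsilon_1 a_{12}=a_{12}\varepsilon_2$ and $\varepsilon_2 a_{21}=a_{21}\varepsilon_1$ one can push every loop to a chosen side of an arrow, normalizing any path to the form (loops)$\cdot$(arrows)$\cdot$(loops); and any path crossing the double arrow more than once acquires, after such rearrangement, a subword $a_{12}a_{21}$ or $a_{21}a_{12}$, which vanishes by the mesh relations. Combined with the nilpotency relations $\varepsilon_i^{d}=0$, this shows that $e_1\Pi e_2$ is spanned by $\{a_{12}\varepsilon_2^{k}\mid 0\le k\le d-1\}$ and $e_2\Pi e_1$ by $\{a_{21}\varepsilon_1^{k}\mid 0\le k\le d-1\}$.

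With these spanning sets in hand the bridge identities are immediate: for any $0\le k,l\le d-1$, moving the loops across the arrow gives
\[a_{21}\varepsilon_1^{k}\cdot a_{12}\varepsilon_2^{l}=a_{21}a_{12}\,\varepsilon_2^{k+l}=0,\]
and symmetrically $a_{12}\varepsilon_2^{k}\cdot a_{21}\varepsilon_1^{l}=a_{12}a_{21}\,\varepsilon_1^{k+l}=0$, using $a_{12}a_{21}=a_{21}a_{12}=0$. Hence $(e_2\Pi e_1)(e_1\Pi e_2)=0=(e_1\Pi e_2)(e_2\Pi e_1)$, and therefore $I_1I_2I_1=0=I_2I_1I_2$.

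The main obstacle is the middle step, namely justifying rigorously that the listed monomials really span the off-diagonal Peirce components, i.e. that the three families of defining relations leave no further surviving paths. This is a finite, purely combinatorial verification: the commutativity relations normalize every path to loops--arrows--loops form, and the mesh relations annihilate every path using two or more arrows. Once this normalization is carried out, the vanishing of the triple products follows from the one-line computation above.
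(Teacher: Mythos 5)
Your proof is correct and follows essentially the same route as the paper's: both reduce $I_1I_2I_1$ to $\Pi e_2\Pi e_1\Pi e_2\Pi$ and kill the middle factor $e_2\Pi e_1\Pi e_2$ by sliding loops across the arrows with the commutativity relations and then invoking $a_{12}a_{21}=0=a_{21}a_{12}$. The paper compresses this to the single computation $a_{21}\varepsilon_1^{l}a_{12}=\varepsilon_2^{l}a_{21}a_{12}=0$, leaving the path normalization implicit, whereas you make it explicit via spanning sets of the Peirce components --- a harmless elaboration, not a different method.
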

\begin{proof} Note that $I_1I_2I_1=\Pi(1-e_1)\Pi\Pi(1-e_2)\Pi\Pi(1-e_1)\Pi=\Pi e_2\Pi e_1\Pi e_2\Pi$. For any non-negative integer $l$, by $\varepsilon_2a_{21}=a_{21}\varepsilon_1$, we have $a_{21}\varepsilon_1^la_{12}=\varepsilon_2^la_{21}a_{12}=0$. In particular, $ e_2\Pi e_1\Pi e_2=0$ and $I_1I_2I_1=0$. Similarly, one can prove $I_2I_1I_2=0$.
\end{proof}

\begin{proposition}\label{prop of preprojective of type B2}
Let $C=\left(\begin{array}{cc}2&-1\\-2&2 \end{array}\right)$, $D=\opname{diag}(2d,d)$ with  positive integer $d$. The preprojective algebra $\Pi=\Pi(C,D)$ is given by the quiver
\[\xymatrix{
\ar@(ul,dl)_{\varepsilon_1}1\ar[rr]<1mm>^{a_{21}}&&\ar@(ur,dr)^{\varepsilon_2}2\ar[ll]^{a_{12}}
}
\]
with relations $\varepsilon_1^{2d}=0, \varepsilon_2^{d}=0, \varepsilon_1^2a_{12}=a_{12}\varepsilon_2,\varepsilon_2a_{21}=a_{21}\varepsilon_1^2, a_{12}a_{21}\varepsilon_1+\varepsilon_1a_{12}a_{21}=0, a_{21}a_{12}=0$,
then $I_1I_2I_1I_2=0=I_2I_1I_2I_1$.
\end{proposition}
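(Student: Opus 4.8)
The plan is to follow the pattern of the proof of Proposition~\ref{prop of preprojective of type A2}, reducing each identity to the vanishing of a space of paths and then controlling \emph{all} paths between the two vertices. Since $1-e_1=e_2$ and $1-e_2=e_1$, we have $I_1=\Pi e_2\Pi$ and $I_2=\Pi e_1\Pi$, so that
\[
I_1I_2I_1I_2=\Pi e_2\Pi e_1\Pi e_2\Pi e_1\Pi=\Pi\,(e_2\Pi e_1\Pi e_2\Pi e_1)\,\Pi,
\]
and symmetrically $I_2I_1I_2I_1=\Pi\,(e_1\Pi e_2\Pi e_1\Pi e_2)\,\Pi$. Hence it suffices to prove $e_2\Pi e_1\Pi e_2\Pi e_1=0$ and $e_1\Pi e_2\Pi e_1\Pi e_2=0$. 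Writing $b:=a_{12}a_{21}\in e_1\Pi e_1$, the defining relations give the auxiliary facts I will use repeatedly: $a_{21}a_{12}=0$ (whence $a_{21}b=a_{21}a_{12}a_{21}=0$ and $ba_{12}=a_{12}a_{21}a_{12}=0$), the mesh relation $\varepsilon_1 b=-b\varepsilon_1$, and the commutativity relations $\varepsilon_2 a_{21}=a_{21}\varepsilon_1^2$ and $a_{12}\varepsilon_2=\varepsilon_1^2 a_{12}$.

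The heart of the argument is a normal form for paths: I claim that $e_2\Pi e_1=\sum_{m\ge 0}K\,a_{21}\varepsilon_1^{m}$ and $e_1\Pi e_2=\sum_{m\ge 0}K\,\varepsilon_1^{m}a_{12}$. Since $e_2\Pi e_1$ is spanned by the residues of paths from $1$ to $2$, it is enough to show every such monomial path $p$ equals a scalar multiple of some $a_{21}\varepsilon_1^{m}$ or vanishes, which I would prove by induction on the length of $p$. Reading $p$ from its source $1$, its first arrow is $\varepsilon_1$ or $a_{21}$. If it is $\varepsilon_1$, then $p=p'\varepsilon_1$ with $p'$ a shorter path $1\to 2$, and induction applies. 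If it is $a_{21}$, write $p=p''a_{21}$ with $p''$ a path $2\to 2$; reading $p''$ from its source, its first arrow is $\varepsilon_2$ or $a_{12}$. In the first case $\varepsilon_2 a_{21}=a_{21}\varepsilon_1^2$ rewrites $p$ as $(q\,a_{21})\varepsilon_1^2$ with $q\,a_{21}$ a strictly shorter path $1\to 2$, so induction applies; in the second case $a_{12}a_{21}=b$ appears, so $p=q''b$ with $q''$ a shorter path $1\to 2$, and by induction $p=a_{21}\varepsilon_1^{m}b=\pm a_{21}b\,\varepsilon_1^{m}=0$ using $\varepsilon_1 b=-b\varepsilon_1$ and $a_{21}b=0$. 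The statement for paths $2\to 1$ is the mirror induction, using $a_{12}\varepsilon_2=\varepsilon_1^2 a_{12}$ and $ba_{12}=0$.

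Granting the normal form, the conclusion is a one-line cancellation. Factoring $e_2\Pi e_1\Pi e_2\Pi e_1=(e_2\Pi e_1)(e_1\Pi e_2)(e_2\Pi e_1)$, a typical spanning element is, up to a scalar,
\[
a_{21}\varepsilon_1^{m_1}\cdot\varepsilon_1^{m_2}a_{12}\cdot a_{21}\varepsilon_1^{m_3}
=a_{21}\varepsilon_1^{m_1+m_2}\,b\,\varepsilon_1^{m_3}
=\pm\,a_{21}b\,\varepsilon_1^{m_1+m_2+m_3}=0,
\]
where I moved $b$ past the powers of $\varepsilon_1$ by $\varepsilon_1 b=-b\varepsilon_1$ and then used $a_{21}b=0$. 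The symmetric computation for $e_1\Pi e_2\Pi e_1\Pi e_2=(e_1\Pi e_2)(e_2\Pi e_1)(e_1\Pi e_2)$ gives $\varepsilon_1^{m_1}\,b\,\varepsilon_1^{m_2+m_3}a_{12}=\pm\varepsilon_1^{m_1+m_2+m_3}b\,a_{12}=0$ using $ba_{12}=0$. This yields both $I_1I_2I_1I_2=0$ and $I_2I_1I_2I_1=0$.

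I expect the normal-form claim to be the main obstacle. In contrast to the type $A_2$ situation of Proposition~\ref{prop of preprojective of type A2}, where $a_{12}a_{21}=0$ as well and every path crosses between the two vertices exactly once, here $a_{12}a_{21}=b\neq 0$, so multiply-crossing paths are a priori present; the work is to show, via $a_{21}a_{12}=0$ and the anticommutation $\varepsilon_1 b=-b\varepsilon_1$, that each such path collapses to the single-crossing form $a_{21}\varepsilon_1^{m}$ (resp.\ $\varepsilon_1^{m}a_{12}$) or vanishes. Once this bookkeeping is carried out, the remaining cancellation is immediate.
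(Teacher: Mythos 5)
Your proof is correct, and at its computational core it coincides with the paper's: both arguments reduce the statement to the vanishing of the corner spaces $e_2\Pi e_1\Pi e_2\Pi e_1$ and $e_1\Pi e_2\Pi e_1\Pi e_2$, and both kill these using exactly the same three ingredients, namely $\varepsilon_2a_{21}=a_{21}\varepsilon_1^{2}$ (resp.\ $a_{12}\varepsilon_2=\varepsilon_1^{2}a_{12}$), the anticommutation $\varepsilon_1b=-b\varepsilon_1$ for $b=a_{12}a_{21}$, and $a_{21}a_{12}=0$. The organization differs: the paper proves no normal form, but instead verifies directly that the two generic alternating words $a_{12}\varepsilon_2^{k}a_{21}\varepsilon_1^{l}a_{12}$ and $a_{21}\varepsilon_1^{l}a_{12}\varepsilon_2^{k}a_{21}$ vanish for all $k,l\geq 0$; since any path crossing between the two vertices three or more times contains such a word as a factor, the corner spaces vanish, the spanning-path bookkeeping being left implicit. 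You instead first establish the normal-form lemma $e_2\Pi e_1=\sum_{m\geq 0}K\,a_{21}\varepsilon_1^{m}$ and $e_1\Pi e_2=\sum_{m\geq 0}K\,\varepsilon_1^{m}a_{12}$ by induction on path length and then conclude by a one-line cancellation; this is slightly longer but makes the implicit spanning argument explicit, and the normal form is reusable structural information about the bimodules $e_i\Pi e_j$. One detail to repair in your write-up: for paths $2\to 1$ the ``mirror induction'' must read the word from its \emph{target} (vertex $1$) rather than from its source --- reading from the source you encounter factors $\varepsilon_1a_{12}$ that no relation rewrites, whereas reading from the target the two facts you list, $a_{12}\varepsilon_2=\varepsilon_1^{2}a_{12}$ and $ba_{12}=0$, are exactly what is needed. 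Note that this reorientation is genuinely necessary, since naive path reversal is not an anti-automorphism of $\Pi$ (it would turn $a_{21}a_{12}=0$ into $a_{12}a_{21}=0$, which fails because $b\neq 0$).
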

\begin{proof}
 For any non-negative integers $l,k$, using the relations, we have $$a_{12}\varepsilon_2^ka_{21}\varepsilon_1^la_{12}=a_{12}a_{21}\varepsilon_1^{2k}\varepsilon_1^la_{12}=(-1)^{2k+l}\varepsilon_1^{2k+l}a_{12}a_{21}a_{12}=0,$$ and
$$a_{21}\varepsilon_1^la_{12}\varepsilon_2^ka_{21}=a_{21}\varepsilon_1^la_{12}a_{21}\varepsilon_1^{2k}=(-1)^la_{21}a_{12}a_{21}\varepsilon_1^{2k+l}=0.$$ Consequently, $ e_2\Pi e_1\Pi e_2\Pi e_1=0$ and $ e_1\Pi e_2\Pi e_1\Pi e_2=0$, which imply the desired equalities $I_1I_2I_1I_2=\Pi e_2\Pi e_1\Pi e_2\Pi e_1\Pi=0$ and $I_2I_1I_2I_1=\Pi e_1\Pi e_2\Pi e_1\Pi e_2\Pi=0$.
\end{proof}
\begin{proposition}\label{prop of preprojective of type F4}
Let $C=\left(\begin{array}{cc}2&-1\\-3&2 \end{array}\right)$, $D=\opname{diag}(3d,d)$ with positive integer $d$. The preprojective algebra $\Pi=\Pi(C,D)$ is given by the quiver
\[\xymatrix{
\ar@(ul,dl)_{\varepsilon_1}1\ar[rr]<1mm>^{a_{21}}&&\ar@(ur,dr)^{\varepsilon_2}2\ar[ll]^{a_{12}}
}
\]
with relations $\varepsilon_1^{3d}=0, \varepsilon_2^{d}=0,  \varepsilon_1^3a_{12}=a_{12}\varepsilon_2,\varepsilon_2a_{21}=a_{21}\varepsilon_1^3, a_{12}a_{21}\varepsilon_1^2+\varepsilon_1a_{12}a_{21}\varepsilon_1+\varepsilon_1^2a_{12}a_{21}=0, a_{21}a_{12}=0 $,
then $I_1I_2I_1I_2I_1I_2=0=I_2I_1I_2I_1I_2I_1$.
\end{proposition}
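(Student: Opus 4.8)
The plan is to imitate the proofs of the $A_2$ and $B_2$ cases above. Since $n=2$ we have $1-e_1=e_2$ and $1-e_2=e_1$, so $I_1=\Pi e_2\Pi$ and $I_2=\Pi e_1\Pi$, and therefore $I_1I_2I_1I_2I_1I_2=\Pi\,X\,\Pi$ and $I_2I_1I_2I_1I_2I_1=\Pi\,Y\,\Pi$ where
\[X=e_2\Pi e_1\Pi e_2\Pi e_1\Pi e_2\Pi e_1,\qquad Y=e_1\Pi e_2\Pi e_1\Pi e_2\Pi e_1\Pi e_2.\]
Hence it suffices to show that the two \emph{cores} $X$ and $Y$ vanish. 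Now $X$ is spanned by monomial paths $1\to 2$ using the arrow $a_{21}$ at least three times, and $Y$ by monomial paths $2\to 1$ using $a_{12}$ at least three times, so the whole problem is to prove that every such monomial is zero in $\Pi$.

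Write $u=a_{21}$, $v=a_{12}$ and $b=a_{12}a_{21}=vu$. First I would record the elementary consequences of the relations: from $a_{21}a_{12}=0$ (that is $uv=0$) one gets $b^2=ub=bv=0$, and from the two commutativity relations $\varepsilon_2 a_{21}=a_{21}\varepsilon_1^3$, $\varepsilon_1^3a_{12}=a_{12}\varepsilon_2$ one gets the key commutations
\[b\varepsilon_1^3=\varepsilon_1^3 b,\qquad \varepsilon_2^k u=u\varepsilon_1^{3k},\qquad v\varepsilon_2^k=\varepsilon_1^{3k}v.\]
The latter let me push every $\varepsilon_2$ in a core monomial outward and turn it into a power of $\varepsilon_1$: a spanning monomial of $X$ then becomes $u\varepsilon_1^{m_0}b\varepsilon_1^{m_1}b\cdots b\varepsilon_1^{m_{k-1}}$ (with $k-1\ge 2$ factors $b$), and one of $Y$ becomes $\varepsilon_1^{n_0}b\varepsilon_1^{n_1}b\cdots b\varepsilon_1^{n_{k-1}}v$. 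Using $b\varepsilon_1^3=\varepsilon_1^3 b$ I can then reduce every interior exponent modulo $3$ (the resulting outer powers of $\varepsilon_1$ and $\varepsilon_2$ do not affect vanishing). Thus it is enough to prove the two lemmas $u\varepsilon_1^{i}b\varepsilon_1^{j}b=0$ and $b\varepsilon_1^{i}b\varepsilon_1^{j}v=0$ for all $i,j$, which, after the mod-$3$ reduction and after discarding the cases killed outright by $b^2=ub=bv=0$, come down to the residue words with $i,j\in\{1,2\}$.

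The crux is to dispatch these residue words using the mesh relation $b\varepsilon_1^2+\varepsilon_1 b\varepsilon_1+\varepsilon_1^2 b=0$. Multiplying it on the left by $u$ and on the right by $b$ and using $ub=b^2=0$ gives $u\varepsilon_1 b\varepsilon_1 b=0$; multiplying on the left by $b$ and on the right by $v$ and using $b^2=bv=0$ gives $b\varepsilon_1 b\varepsilon_1 v=0$. The remaining residue words I would reduce to these two by the ``rotation'' identities obtained from the mesh relation,
\[b\varepsilon_1^2 b=-\varepsilon_1 b\varepsilon_1 b=-b\varepsilon_1 b\varepsilon_1,\qquad u\varepsilon_1^2 b=-u\varepsilon_1 b\varepsilon_1,\]
together with $\varepsilon_1^3 b=b\varepsilon_1^3$ and $b^2=0$; for instance $b\varepsilon_1 b\varepsilon_1^2 v=-b\varepsilon_1^2 b\varepsilon_1 v=\varepsilon_1 b\varepsilon_1 b\varepsilon_1 v=\varepsilon_1(b\varepsilon_1 b\varepsilon_1 v)=0$, and $u\varepsilon_1^2 b\varepsilon_1^2 b=-u\varepsilon_1 b\varepsilon_1^3 b=-u\varepsilon_1 b^2\varepsilon_1^3=0$.

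The main obstacle is exactly this last reduction: the naive substitutions are circular, since applying the two expressions for $b\varepsilon_1^2 b$ in succession returns the original word and yields only $0=0$. One must organise the reduction so that each $\varepsilon_1^2$-type word is rewritten in terms of the unique $\varepsilon_1\!\cdot\!\varepsilon_1$-type word already shown to vanish, never in terms of another $\varepsilon_1^2$-type word. Once all the residue words are zero, the lemmas $u\varepsilon_1^m b\varepsilon_1^n b=0$ and $b\varepsilon_1^m b\varepsilon_1^n v=0$ hold for all $m,n\ge 0$; the first annihilates the left factor of every reduced monomial of $X$ and the second the right factor of every reduced monomial of $Y$ (including those with more than three crossings, whose reduced form contains one of these as a factor). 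Hence $X=Y=0$, giving $I_1I_2I_1I_2I_1I_2=0=I_2I_1I_2I_1I_2I_1$.
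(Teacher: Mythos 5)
Your proposal is correct, and it supplies precisely the calculation the paper omits: the paper's own ``proof'' of this proposition only says the computation is similar to, but more complicated than, the $B_2$ case (Proposition \ref{prop of preprojective of type B2}) and leaves it to the reader, and your argument executes it in exactly that style --- reduce to the vanishing of the core $e_2\Pi e_1\Pi e_2\Pi e_1\Pi e_2\Pi e_1$ and its mirror, normalize monomials by pushing the $\varepsilon_2$'s through the crossings and reducing exponents modulo $3$ via $b\varepsilon_1^3=\varepsilon_1^3 b$, and kill the residue words with the mesh relation, the two base identities $u\varepsilon_1 b\varepsilon_1 b=0$ and $b\varepsilon_1 b\varepsilon_1 v=0$ coming from $u\cdot(\mathrm{mesh})\cdot b$ and $b\cdot(\mathrm{mesh})\cdot v$. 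I checked that all residue words with $i,j\in\{1,2\}$ reduce as you prescribe (e.g.\ $u\varepsilon_1 b\varepsilon_1^2 b=-u\varepsilon_1 b\varepsilon_1 b\varepsilon_1=0$ and then $u\varepsilon_1^2 b\varepsilon_1 b=-u\varepsilon_1 b\varepsilon_1^2 b=0$), so the circularity you flag is genuinely circumvented; the only silent micro-step is that your first equality $b\varepsilon_1 b\varepsilon_1^2 v=-b\varepsilon_1^2 b\varepsilon_1 v$ drops a term $-b\varepsilon_1^3 bv$, which vanishes by $bv=0$.
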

\begin{proof}
The proof involves similar but more complicate calculation as Proposition~\ref{prop of preprojective of type B2} and we left the calculation as an exercise to the reader.
\end{proof}

Now we are in a position to consider the general cases.
\begin{proposition}\label{relation of I_i}

Let $\Pi=\Pi(C,D)$ be a preprojective algebra, then

$(a)$ $I^2_i = I_i$;

$(b)$ $I_iI_j = I_jI_i$, if $c_{ij}c_{ji}=0$;

$(c)$ $I_iI_jI_i = I_jI_iI_j$, if $c_{ij}c_{ji}=1$;

$(d)$ $I_iI_jI_iI_j = I_jI_iI_jI_i$, if $c_{ij}c_{ji}=2$;

$(e)$ $I_iI_jI_iI_jI_iI_j = I_jI_iI_jI_iI_jI_i$, if $c_{ij}c_{ji}=3$.

\end{proposition}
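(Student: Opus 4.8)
The plan is to dispatch (a) directly and to reduce (b)--(e) to the rank two computations of Propositions~\ref{prop of preprojective of type A2}--\ref{prop of preprojective of type F4}. Identity (a) is immediate: $1-e_i$ is idempotent, so $1-e_i=(1-e_i)1(1-e_i)\in(1-e_i)\Pi(1-e_i)$ and hence $I_i=\Pi(1-e_i)\Pi\subseteq\Pi(1-e_i)\Pi(1-e_i)\Pi=I_i^2\subseteq I_i$. For each of (b)--(e) I write $M$ and $N$ for the two sides of the asserted equality (say $M=I_iI_jI_i$, $N=I_jI_iI_j$ in (c)). Both are two-sided ideals, so $M=\bigoplus_{k,l\in Q_0}e_kMe_l$ and likewise for $N$, and it suffices to check $e_kMe_l=e_kNe_l$ for every pair $(k,l)$. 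The elementary tool will be the stripping identities $e_kI_a=e_k\Pi$ for $k\neq a$ and $I_ae_l=\Pi e_l$ for $l\neq a$, together with $\Pi I_a=I_a=I_a\Pi$; these hold because $e_k(1-e_a)=e_k$ for $k\neq a$, and the first was already noted for $I_i$ in Section~\ref{S:basic-notions}.

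First I would use the stripping identities to peel off the outermost factors of each corner. A short case check then shows that, apart from two corners (interchanged by $i\leftrightarrow j$), the expressions $e_kMe_l$ and $e_kNe_l$ collapse to the same reduced product; for example both equal $e_k\Pi e_l$ whenever $k,l\notin\{i,j\}$, and in (c) the off-diagonal corners give $e_iMe_j=e_iI_iI_je_j=e_iNe_j$ and $e_jMe_i=e_jI_jI_ie_i=e_jNe_i$. The resistant corners are the diagonal ones $e_iMe_i,e_jMe_j$ when the braid word has odd length (case (c)) and the off-diagonal ones $e_iMe_j,e_jMe_i$ when it has even length (cases (d),(e)). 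In each case stripping reduces the required identity to a single inclusion, e.g.
\[ e_iI_ie_i\ \subseteq\ e_iI_iI_jI_ie_i \qquad\text{in case (c)},\]
the reverse inclusion being automatic from $I_iI_jI_i\subseteq I_i^2=I_i$; in (d) the analogous inclusion reads $e_iI_iI_je_j\subseteq e_iI_iI_jI_iI_je_j$, and similarly for (e).

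To prove such an inclusion I would argue on individual paths. An element of the smaller corner is a combination of paths $p$ that leave $i$ (resp.\ run from $j$ to $i$) and hence visit a vertex $\neq i$. If $p$ visits some vertex $k\notin\{i,j\}$, then since $(1-e_i)(1-e_j)=1-e_i-e_j$ is idempotent and fixes $e_k$, any alternating product of the factors $1-e_i,1-e_j$ may be inserted at $k$; cutting $p$ at $k$ then exhibits it in the longer product. The delicate case is when $p$ is supported on $\{i,j\}$ only. Using the commutativity relations $(P_2)$ to sweep the loops $\varepsilon_i,\varepsilon_j$ to one end, such a $p$ becomes a scalar multiple of a mesh summand $\varepsilon_i^{f}a^{(g)}_{ij}a^{(g)}_{ji}\varepsilon_i^{f_{ji}-1-f}$ of the relation $(P_3)$ at $i$. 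If $i$ has a neighbour $k\neq j$, that relation rewrites this summand as a combination of analogous summands involving $a^{(g)}_{ik}a^{(g)}_{ki}$, each of which passes through the third vertex $k$ and so already lies in the longer product by the previous step. If $i$ and $j$ have no further neighbours, then $\{i,j\}$ spans a connected component on which $\Pi$ is precisely the rank two algebra of Proposition~\ref{prop of preprojective of type A2}, \ref{prop of preprojective of type B2} or \ref{prop of preprojective of type F4}, where the relevant product of the $I$'s vanishes, forcing $p=0$. This dichotomy also settles (b): when $c_{ij}c_{ji}=0$ there is no arrow between $i$ and $j$, so every path joining them automatically visits a third vertex and the delicate case never occurs.

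The principal difficulty is concentrated in this last step, the analysis of paths supported on $\{i,j\}$. For (c) it collapses to the single relation $a_{ij}a_{ji}=0$ read off from one mesh relation, but for (d) and (e) one has $c_i\neq c_j$ and $f_{ij}\neq f_{ji}$, so the mesh relation at $i$ is a genuine alternating sum and must be combined with $(P_2)$ and the nilpotency $(P_1)$ to check that every $\{i,j\}$-supported path of the pertinent length is either rewritten through a third vertex or annihilated. This bookkeeping is exactly what Propositions~\ref{prop of preprojective of type B2} and \ref{prop of preprojective of type F4} record, which is why I would isolate those vanishing statements beforehand; granting them, the corner reduction promotes each rank two base case to the full braid relation.
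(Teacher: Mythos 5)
Your treatment of (a), the corner decomposition via the stripping identities $e_kI_a=e_k\Pi$ ($k\neq a$), the identification of the resistant corners, and the insertion trick for paths through a third vertex are all correct; indeed the insertion trick is exactly the observation, central to the paper's proof, that \emph{every} product of the ideals $I_i,I_j$ contains $I_{i,j}:=\Pi(1-e_i-e_j)\Pi$. The genuine gap sits in your ``delicate case'' for (d) and (e). The claimed normal form --- that an $\{i,j\}$-supported path can be swept by $(P_2)$ into a scalar multiple of a single mesh summand $\varepsilon_i^{f}a^{(g)}_{ij}a^{(g)}_{ji}\varepsilon_i^{f_{ji}-1-f}$ --- is false once $f_{ji}\geq 2$, which is exactly cases (d) and (e). The relation $(P_2)$ moves $\varepsilon_i$ across an arrow only in blocks of size $f_{ji}$ (in the notation of Proposition~\ref{prop of preprojective of type B2}, only $\varepsilon_1^2a_{12}=a_{12}\varepsilon_2$ holds), so in a path such as $a_{21}\varepsilon_1^{l}a_{12}$ with $l$ odd a loop is stuck between the crossings; Proposition~\ref{prop of preprojective of type B2} disposes of it not via $(P_2)$ but via the mesh anticommutation $a_{12}a_{21}\varepsilon_1=-\varepsilon_1a_{12}a_{21}$, and in the full algebra $\Pi$ the mesh relation at $i$ (and at $j$, whose consequence $a_{21}a_{12}=0$ the rank-two computation also uses) holds only up to terms passing through third vertices. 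Moreover the resistant corners for (d) and (e) contain $\{i,j\}$-supported paths that alternate between $i$ and $j$ several times (three crossings in (d), up to five in (e)) while still failing to exhibit the required cut pattern; these are not mesh summands at all. Your concluding appeal that ``this bookkeeping is exactly what Propositions~\ref{prop of preprojective of type B2} and \ref{prop of preprojective of type F4} record'' is only licensed when $\{i,j\}$ is a full connected component, and your dichotomy also omits the mixed case where $i$ has no further neighbour but $j$ does, in which the mesh relation at $j$ again carries third-vertex corrections.

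The missing bridge --- and the paper's actual two-line mechanism --- is to pass to the quotient $\Pi/I_{i,j}$. Killing $1-e_i-e_j$ kills precisely the third-vertex summands of the mesh relations, so $\Pi/I_{i,j}$ is (a quotient of) the rank-two preprojective algebra of Proposition~\ref{prop of preprojective of type A2}, \ref{prop of preprojective of type B2} or \ref{prop of preprojective of type F4}; note the symmetrizer automatically has the shape $\operatorname{diag}(d,d)$, $\operatorname{diag}(2d,d)$, $\operatorname{diag}(3d,d)$ there because $c_ic_{ij}=c_jc_{ji}$. By those propositions the braid-length products vanish in the quotient, so both sides of each identity in (b)--(e) are contained in $I_{i,j}$; combined with your insertion trick (both sides contain $I_{i,j}$), both sides \emph{equal} $I_{i,j}$ and the proposition follows at once --- no corner decomposition, no path rewriting, and the mixed case disappears. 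You have all the ingredients; what is missing is assembling them through the quotient rather than path by path, since only in the quotient do the rank-two computations of Propositions~\ref{prop of preprojective of type B2} and \ref{prop of preprojective of type F4} literally apply.
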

\begin{proof}
$(a)$ is obvious. 

Let $I_{i,j}=\Pi(1-e_i-e_j)\Pi$, then any product of ideals $I_i$ and $I_j$ contains $I_{i,j}$.

If $c_{ij}c_{ji}=0$, then
$\Pi/I_{i,j}$ is semisimple, thus $I_iI_j =I_{i,j}= I_jI_i.$

If $c_{ij}c_{ji}=1$, then $\Pi/I_{i,j}$ is isomorphic to the preprojective algebra  in Proposition  \ref{prop of preprojective of type A2}. Let $I$ be an ideal of $\Pi$, denote the image of $I$ in $\Pi/I_{i,j}$ by $\overline{I}$. Hence, by Proposition  \ref{prop of preprojective of type A2}, we have $\overline{I_iI_jI_i} =\overline{I_i}\ \overline{I_j}\ \overline{I_i}=0 = \overline{I_j}\ \overline{I_i}\ \overline{I_j}= \overline{I_jI_iI_j}$, thus  $I_iI_jI_i \subset I_{i,j}$ and $I_jI_iI_j\subset I_{i,j}$.  Since any product of ideals $I_i$ and $I_j$ contains $I_{i,j}$, then we have $I_iI_jI_i =I_{i,j}= I_jI_iI_j$.

If $c_{ij}c_{ji}=2$ or $3$,   then $\Pi/I_{i,j}$ is isomorphic to the preprojective algebra  in Proposition \ref{prop of preprojective of type B2} or the preprojective algebra  in Proposition \ref{prop of preprojective of type F4}. Similar to  the case $c_{ij}c_{ji}=1$, one can prove the statements $(d)$ and $(e)$.
\end{proof}

\begin{theorem}\label{bijection w and I nondynkin}
Let $\Pi=\Pi(C,D)$ be a preprojective algebra.
 There exists a bijection $\psi:W\to  \langle I_1,I_2,\cdots,I_n\rangle$ given by $\psi(w)=I_w = I_{i_1}I_{i_2}\cdots I_{i_k}$
for any reduced expression $w = s_{i_1}s_{i_2}\cdots s_{i_k}$.
\end{theorem}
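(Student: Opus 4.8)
The plan is to construct the map $\psi$ and verify it is a well-defined bijection by combining the word property of Coxeter groups (Lemma~\ref{Word Property}) with the algebraic relations among the ideals $I_i$ established in Proposition~\ref{relation of I_i}, and then to separate injectivity from the geometric realization of $W$ (Lemma~\ref{BB}). First I would address \emph{well-definedness}: given $w \in W$ with two reduced expressions, I must show the corresponding products of ideals agree. By part $(b)$ of Lemma~\ref{Word Property}, any two reduced expressions for $w$ are connected by a sequence of braid-moves, and each braid-move corresponds exactly to one of the relations $(b)$--$(e)$ in Proposition~\ref{relation of I_i} (according to whether $c_{ij}c_{ji}$ equals $0,1,2,3$). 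Hence $I_w := I_{i_1}\cdots I_{i_k}$ does not depend on the choice of reduced expression. I would also note that for an arbitrary (not necessarily reduced) expression, a nil-move $s_is_i \mapsto \mathrm{id}$ is absorbed by the idempotency relation $I_i^2 = I_i$ from part $(a)$; combined with part $(a)$ of Lemma~\ref{Word Property}, this shows that the product associated to \emph{any} word representing $w$ equals $I_w$.

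The next step is \emph{surjectivity}, which is essentially immediate: every element of the semigroup $\langle I_1, \ldots, I_n\rangle$ is by definition a product $I_{i_1}\cdots I_{i_k}$, and by the word property such a word reduces — via nil-moves and braid-moves, matched respectively by relation $(a)$ and relations $(b)$--$(e)$ — to a reduced expression for the element $w = s_{i_1}\cdots s_{i_k} \in W$. Thus every generator-product equals $I_w$ for some $w$, so $\psi$ is onto. The genuinely substantive part is \emph{injectivity}, i.e. showing that distinct Weyl group elements yield distinct ideals. Here the plan is to leverage the faithful geometric representation $\varphi : W \to \GL(V^*)$ of Lemma~\ref{BB}. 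The idea is to attach to each ideal $I_w$ a vector-space invariant — for instance, the rank-length data or the dimension vector of the locally free module $\Pi/I_w$, viewed as an element of $\Z^n \cong V^*$ — and to verify that the action of multiplying by $I_i$ on this invariant matches the action of the simple reflection $\sigma_i^*$ on $V^*$. Concretely, I would track $\dimv(\Pi/I_w)$ (or the class of $I_w$ in an appropriate Grothendieck-type lattice) and show, using the resolution of $E_i$ from Lemma~\ref{lem of EI resolution} together with the exact sequence~\eqref{exact sequcence Ii and ei}, that passing from $I_w$ to $I_wI_i$ transforms this invariant exactly by $\sigma_i^*$.

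Granting such a compatibility, injectivity follows: if $I_w = I_{w'}$ then their invariants coincide, so $\varphi(w)$ and $\varphi(w')$ act identically on the relevant vector, and faithfulness of $\varphi$ forces $w = w'$ (after checking the action is free on the distinguished generating configuration). I expect the \textbf{main obstacle} to be precisely this last compatibility: verifying that tensoring or multiplying by $I_i$ implements the reflection $\sigma_i^*$ on the chosen numerical invariant, with the correct signs and multiplicities $c_{ji}$ appearing in~\eqref{resolution of E_I nondynkin case}. This requires a careful bookkeeping of how the generalized simple modules $E_j$ and their ranks behave under the operation $T \mapsto TI_i$, and it is here that the symmetrizable (as opposed to symmetric) setting demands genuine care, since the entries $c_i$ of the symmetrizer and the numbers $f_{ij}, g_{ij}$ enter the rank computations nontrivially. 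Once the reflection action is pinned down, the remaining assembly — well-definedness, surjectivity, and the faithfulness argument — is formal.
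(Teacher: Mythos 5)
Your well-definedness argument (braid-moves matched to Proposition~\ref{relation of I_i}$(b)$--$(e)$) and, for the non-Dynkin case, your injectivity strategy via a Grothendieck-lattice action matched with $\sigma_i^*$ are essentially the paper's. But there are genuine gaps. First, a local error: your claim that nil-moves are ``absorbed'' by $I_i^2=I_i$, so that the product attached to \emph{any} word representing $w$ equals $I_w$, is false. Deleting $s_is_i$ removes two letters from the word, while idempotency only collapses $I_iI_i$ to a single $I_i$: for instance the word $s_1s_1$ represents the identity, yet $I_1I_1=I_1\neq \Pi=I_{\mathrm{id}}$. So the assignment from words to ideals does not factor through $W$, and your surjectivity argument as stated breaks. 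The paper repairs this by taking, for $I=I_{i_1}\cdots I_{i_k}$, an expression with $k$ minimal: if a nil-move occurred in reducing $s_{i_1}\cdots s_{i_k}$, then $I_i^2=I_i$ would produce a strictly shorter product for the same ideal, contradicting minimality; hence only braid-moves occur, $w$ is already reduced, and $\psi(w)=I$.

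Second, and more seriously, your injectivity plan does not cover the Dynkin case, which the theorem includes (the statement has no non-Dynkin hypothesis). The $K_0$-action argument needs $I_i$ to be a tilting module with $\End_{\Pi}(I_i)\cong\Pi$ (Proposition~\ref{Ii is tilting}, proved only for $C$ without Dynkin components), the two-term projective resolution \eqref{resolution of E_I nondynkin case}, and the identification $I_w=I_{i_1}\lten_{\Pi}\cdots\lten_{\Pi}I_{i_k}$ via the Tor-vanishing of Section~\ref{S:non-Dynkin-type}. All of this fails over a Dynkin-type $\Pi$: the algebra is selfinjective, so $e_iI_i$ is either projective or of infinite projective dimension (the resolution of $E_i$ becomes the four-term sequence \eqref{resolution of E_I dynkin case} starting with $E_{\sigma(i)}$), $-\lten_{\Pi}I_i$ is not an autoequivalence of $\mathcal{K}^b(\proj\Pi)$, and multiplication by $I_i$ is genuinely non-invertible (e.g.\ $E_iI_i=0$; ideals $I_w$ can even vanish). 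The paper's actual proof is a case split: in the Dynkin case it lifts to the preprojective algebra $\widetilde{\Pi}$ of the extended Cartan matrix (adding a vertex $0$), uses $\Pi=\widetilde{\Pi}/\langle e_0\rangle$ and $I_w=\widetilde{I}_w/\langle e_0\rangle$, observes $e_0\in\widetilde{I}_w$ since $s_0$ does not occur in a reduced word for $w\in W$, and so reduces injectivity to the non-Dynkin case already settled. Finally, a smaller point: in the non-Dynkin case you propose tracking a single vector such as $\dimv(\Pi/I_w)$ and invoking faithfulness ``after checking the action is free on the distinguished generating configuration''---that freeness is an extra unproven condition; the paper avoids it by comparing the induced automorphisms $[-\lten_{\Pi}I_w]$ of $K_0(\mathcal{K}^b(\proj\Pi))$ with $\varphi(w)$ themselves, where faithfulness of $\varphi$ (Lemma~\ref{BB}) applies directly.
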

\begin{proof}
We first show that the map $\psi$ is well-defined.
 For any two reduced expression $w_1,w_2$ of an element  in $ W$, by Lemma \ref{Word Property}, $w_1, w_2$ can be connected by a sequence of braid-moves. Then by Proposition \ref{relation of I_i},  we have $I_{w_1}=I_{w_2}$.
 
 For any ideal $I\in  \langle I_1,I_2,\cdots,I_n\rangle$, take $I = I_{i_1}I_{i_2}\cdots I_{i_k}$ with $k$ minimal,  let $w=s_{i_1}s_{i_2}\cdots s_{i_k}$.
By Lemma \ref{Word Property}, $w$ can be transformed into a reduced expression by a sequence of nil-moves and braid-moves. By Proposition \ref{relation of I_i}, the nil-moves can not appear since $k$ is minimal. Hence $w$ is a reduced expression and we have $\psi(w)=I_w=I$. In particular, the map is surjective.

It remains to show that the map is injective. We separate the proof into two cases.

Let us assume first that $C$ has no component of Dynkin type.
Denote by $\varepsilon =\mathcal{K}^b(\operatorname{proj} \Pi)$, {\it i.e.} the homotopy category of bounded complexes of projective $\Pi$-modules. For any $i\in Q_0$, since $I_i$ is a $\Pi$-tilting module with $\End~ I_i\cong\Pi$, we have an autoequivalence $-\lten_{\Pi}I_i$
of $\varepsilon$  which induces an automorphism $[-\lten_{\Pi}I_i]$
of the Grothendieck group $K_0(\varepsilon)$. Note that $\{[e_j\Pi]~|~j\in Q_0\}$ is a $\mathbb{Z}$-basis of $K_0(\varepsilon)$.
By the projective resolution of $e_iI_i$
 $$0\to e_i\Pi\to \bigoplus_{j\in\overline{\Omega}(i,-)}(e_j\Pi)^{|c_{ji}|}\to e_iI_i\to 0,$$
it is easy to get that 
$$[e_l\Pi\lten_{\Pi}I_i]=\begin{cases}[e_iI_i]=[\bigoplus\limits_{j\in \overline{\Omega}(i,-)}(e_j\Pi)^{|c_{ji}|}]-[e_i\Pi]=[e_i\Pi]-\sum\limits_{j=1}^nc_{ji}[e_j\Pi]& if\ l=i;  \\ [e_lI_i] =[e_l\Pi] & if\ l\neq i. \end{cases}$$
In particular, by  identifying  $-\lten_{\Pi}I_i$ with  $\sigma_i^* $ for $V^*=K_0(\varepsilon)\otimes_{\mathbb{Z}} \mathbb{C}$, 
we obtain an action $s_i\mapsto [-\lten_{\Pi}I_i]$ of $W$ on $V^*$. 
Note that for any reduced expression  $w = s_{i_1}s_{i_2}\cdots s_{i_k}$, $I_w = I_{i_1}I_{i_2}\cdots I_{i_k}=I_{i_1}\lten_{\Pi}I_{i_2}\lten_{\Pi}\cdots \lten_{\Pi} I_{i_k}.$
Hence, the action of $[-\lten_{\Pi}I_w]$ on $V^*$ coincides with the action of $ [-\lten_{\Pi}I_{i_1}\lten_{\Pi}I_{i_2}\lten_{\Pi}\cdots \lten_{\Pi} I_{i_k}]$ on $V^*$.
Therefore, for any reduced word $w, w'$ such that  $I_w=I_{w'}$, we have $[-\lten_{\Pi}I_w]=[-\lten_{\Pi}I_{w'}]$. By
Lemma \ref{BB}, we deduce that $w=w'$.

Now suppose that   $C$ is  of Dynkin type and without  loss of generality, we may assume that $C$ is connected.
Let $\widetilde{Q}=\widetilde{Q}(\widetilde{C},\widetilde{\Omega})$ be an extended Dynkin quiver obtained from $Q=Q(C,\Omega)$ by adding a new vertex  $(i.e.\ \widetilde{Q_0}=\{0\}\cup Q_0)$ and the associated arrows, $\widetilde{\Pi}$ be the associated preprojective algebra and  $\widetilde{W}$ the  associated Weyl group.
Denote  $\widetilde{I_i}=\widetilde{\Pi}(1-e_i)\widetilde{\Pi}$ for each  $i\in \widetilde{Q_0}.$
For each $w\in  \widetilde{W}$,  let $w=s_{i_1}\cdots s_{i_k}$ be a reduced expression,
 denote $\widetilde{I_{ w}}=\widetilde{I}_{i_1}\cdots \widetilde{I}_{i_k}$.
Then we have $\Pi=\widetilde{\Pi}/\langle e_0\rangle, I_i=\widetilde{I_i}/\langle e_0\rangle$ for $i\neq 0$. Hence for each reduced word $w=s_{i_1}\cdots s_{i_k}\in  {W}$, we have $$I_{w}=({\widetilde{I}}_{i_1}/\langle e_0\rangle)\cdots ({\widetilde{I}}_{i_k}\langle e_0\rangle)=({\widetilde{I}}_{i_1}\cdots {\widetilde{I}}_{i_k})/\langle e_0\rangle=\widetilde{I}_{w}/\langle e_0\rangle.$$
Suppose $I_w=I_{w'}$ for two reduced words $w,w'\in W$, then $\widetilde{I}_{w}/\langle e_0\rangle=\widetilde{I}_{w'}/\langle e_0\rangle$.
Since $s_0$ does not appear in reduced expression of $w$ and $w'$, we have $e_0\in \widetilde{I}_{w}$ and $e_0\in \widetilde{I}_{w'}$.
Consequently, $\widetilde{I}_{w}=\widetilde{I}_{w'}$ and $w=w'$ by the case of non-Dynkin type.
\end{proof}

\section{Preprojective algebras of Dynkin type}~\label{S:Dynkin-type}

In this section, after recall the basic definition and properties of support $\tau$-tilting modules, we generalize the classification results of \cite{M} for classical preprojective algebras of type $A,D,E$ to the preprojective algebras of Dynkin type in the sense of ~\cite{GLS14}. We mainly follow~\cite{M}.
\subsection{Recollection on support $\tau$-tilting modules}
In this subsection, we assume $\Lambda$ to be a finite-dimensional algebra over $K$. Let $\tau$ be the Auslander-Reiten translation of $\Lambda$-modules. We follow~\cite{AIR}.

\begin{definition}Let $M$ be a $\Lambda$-module and $P$ a  projective $\Lambda$-module.

$(1)$  $M $ is  a {\it $\tau$-rigid} $\Lambda$-module if $\Hom_{\Lambda}(M,\tau M) = 0.$

$(2)$  $M$ is a {\it $\tau$-tilting} $\Lambda$-module if it is $\tau$-rigid and $|M| = |\Lambda|.$

$(3)$  $M$ is a {\it support $\tau$-tilting} $\Lambda$-module if there exists an idempotent $e \in \Lambda$ such
 $M$ is a $\tau$-tilting $(\Lambda/\langle e\rangle)$-module.
 
$(4)$ The pair $(M,P)$ is a {\it $\tau$-rigid pair} if $M$ is $\tau$-rigid and
$\Hom_{\Lambda}(P,M) = 0.$

$(5)$ A $\tau$-rigid pair $(M,P)$ is a {\it support $\tau$-tilting} (respectively, {\it almost complete
support $\tau$-tilting}) pair if $|M| + |P| = |\Lambda|$ (respectively, $|M| + |P| = |\Lambda| -1$).
\end{definition}

The pair $(M,P)$ is {\it basic} if $M$ and $P$ are basic. By \cite[Prop 2.3]{AIR},  if $(M,P)$ is a basic support $\tau$-tilting pair, then $P$ is determined by $M$ uniquely, and $(M,P)$ is a 
support $\tau$-tilting pair if and only if 
$M$ is a $\tau$-tilting  $(\Lambda/\langle e\rangle)$-module, where $e$ is an idempotent of $\Lambda$ such that $\add P =\add e\Lambda $. Thus we can identify basic support $\tau$-tilting modules with basic support $\tau$-tilting pairs.

For a $\Lambda$-module $M$, denote by $\opname{Fac} M$ the category formed by the quotients of finite direct sum of $M$.
Let $\opname{s\tau-tilt} \Lambda$ be the set of isomorphism classes of basic support $\tau$-tilting $\Lambda$-modules. For $T, T'\in \opname{s\tau-tilt} \Lambda$, we write $T\leq  T'$ if $\opname{Fac} T\subseteq \opname{Fac} T'$. This defines a partial order $\leq$ on $\opname{s\tau-tilt} \Lambda$.

The following result has been obtained in~\cite{AIR}.
\begin{theorem}
Any basic almost support $\tau$-tilting pair $(U,Q)$ is  a direct summand of exactly two basic support $\tau$-tilting pairs $(T,P),(T',P')$. Moreover,  we have either $T<T'$ or $T'<T$.
\end{theorem}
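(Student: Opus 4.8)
The plan is to recast the statement inside the lattice of functorially finite torsion classes of $\mod\Lambda$, using the order-preserving bijection of~\cite{AIR} between basic support $\tau$-tilting pairs $(T,P)$ and functorially finite torsion classes, sending $(T,P)\mapsto \opname{Fac} T$ and the partial order $\leq$ to inclusion. Under this dictionary a completion $(T,P)$ of the almost complete pair $(U,Q)$ — that is, a support $\tau$-tilting pair having $(U,Q)$ as a direct summand — corresponds to a functorially finite torsion class $\ct$ in which $U$ is $\opname{Ext}$-projective and with $\Hom_{\Lambda}(Q,\ct)=0$. Since every support $\tau$-tilting pair has exactly $|\Lambda|$ indecomposable summands while $(U,Q)$ already supplies $|\Lambda|-1$ of them, \emph{every} such torsion class is automatically a completion, adding precisely one indecomposable. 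Hence the whole assertion reduces to showing that the set of these torsion classes is a two-element chain, a covering relation $\ct^{-}\subsetneq \ct^{+}$ in the poset of functorially finite torsion classes; comparability of the two completions is then built into the conclusion.

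First I would produce the top of this interval by a Bongartz-type completion. Because $U$ is $\tau$-rigid, the class ${}^{\perp}(\tau U)=\{X\mid \Hom_{\Lambda}(X,\tau U)=0\}$ contains $U$ and is a torsion class (closure under quotients and extensions is a short left-exactness check on $\Hom_{\Lambda}(-,\tau U)$), and since $Q$ is projective the class $Q^{\perp}=\{X\mid \Hom_{\Lambda}(Q,X)=0\}$ is a torsion class as well. Their intersection $\ct^{+}:={}^{\perp}(\tau U)\cap Q^{\perp}$ is the largest torsion class compatible with $(U,Q)$; granting its functorial finiteness, the support $\tau$-tilting pair determined by $\ct^{+}$ — whose module part is the sum of the indecomposable $\opname{Ext}$-projectives of $\ct^{+}$ — is a completion $(T^{+},P^{+})$ of $(U,Q)$.

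Next I would produce the second completion by mutation. Let $Y$ be the unique indecomposable summand of $(T^{+},P^{+})$ not belonging to $(U,Q)$, and form a minimal left $\add U$-approximation of $Y$. A short case analysis — according to whether this approximation is a monomorphism — produces either a new indecomposable $\tau$-rigid module $Y'$ with $T^{-}=U\oplus Y'$, or else forces the extra summand into the projective part, giving $T^{-}=U$ and $P^{-}=Q\oplus R$ for a suitable indecomposable projective $R$. Tracking $\opname{Fac}$ along the resulting exchange sequence shows $\opname{Fac} T^{-}\subsetneq \opname{Fac} T^{+}$, so the two completions are comparable, namely $T^{-}<T^{+}$.

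The crux is to rule out any further completion, equivalently to prove that $\ct^{-}\subsetneq \ct^{+}$ is genuinely a covering relation, with no functorially finite torsion class strictly in between. This is exactly where the hypothesis $|U|+|Q|=|\Lambda|-1$ is indispensable: an intermediate torsion class would yield a third support $\tau$-tilting pair containing $(U,Q)$, hence a second inequivalent way to complete a pair of defect one, contradicting the uniqueness of the minimal $\add U$-approximation used in the mutation step. I expect this uniqueness/no-intermediate step to be the main obstacle: the existence of the two completions is a fairly mechanical use of Bongartz completion and approximation theory, whereas showing there are \emph{no others} requires coupling the uniqueness of minimal approximations with the characterization of when a $\tau$-rigid module is $\opname{Ext}$-projective in a given torsion class, and it is precisely the defect-one hypothesis that makes this rigidity bite.
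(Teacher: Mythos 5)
The paper does not prove this statement at all: it is imported verbatim from \cite{AIR} (Theorem 2.18 there, stated here with the comparability clause), so your proposal has to be measured against the original argument, whose overall frame you have correctly identified — the order-preserving bijection with functorially finite torsion classes, the interval between $\opname{Fac}U$ and ${}^{\perp}(\tau U)\cap Q^{\perp}$, Bongartz completion at the top. The genuine gap is in your final step, which is the entire content of the theorem. Uniqueness of the minimal left $\add U$-approximation of $Y$ constrains only the one exchange you performed at the top pair; a hypothetical third completion $(T'',P'')$ is not produced by that exchange, so there is nothing for that uniqueness to contradict, and your defect-one counting shows only that each completion contributes exactly one new indecomposable, not that there are at most two completions. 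What must actually be proved is the dichotomy: for any completion $(T,P)$ with extra indecomposable summand $X$ (or extra projective summand in $P$), either $X\in\opname{Fac}U$, in which case $\opname{Fac}T=\opname{Fac}U$ and injectivity of the bijection forces $(T,P)$ to be the bottom pair, or $X\notin\opname{Fac}U$, in which case one must show $\opname{Fac}T={}^{\perp}(\tau U)\cap Q^{\perp}$ — equivalently, that no functorially finite torsion class lies strictly inside the interval. That second implication is the heart of the proof in \cite{AIR} (it uses, among other things, the identity $\opname{Fac}T={}^{\perp}(\tau T)\cap P^{\perp}$ for a support $\tau$-tilting pair); the cleanest modern route is Jasso's $\tau$-tilting reduction, under which the interval becomes the poset of functorially finite torsion classes of an algebra with a single simple module, and a local algebra visibly has exactly two basic support $\tau$-tilting pairs, $(\Lambda,0)$ and $(0,\Lambda)$. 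Your sketch contains no substitute for this step.

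There is also a circularity to repair in your construction of the second completion. The exchange-sequence computation you invoke is exactly Lemma~\ref{mutaion} (that is, \cite[Theorem~2.30]{AIR}), and in \cite{AIR} that result is proved \emph{after}, and using, the present theorem, so it cannot be an ingredient here. It is also unnecessary: since $U$ is $\tau$-rigid, $\opname{Fac}U$ is itself a functorially finite torsion class in which $U$ is Ext-projective (by the Auslander--Smal{\o} criterion, $\Ext^1_{\Lambda}(U,\opname{Fac}U)=0$ if and only if $\Hom_{\Lambda}(U,\tau U)=0$), and $\Hom_{\Lambda}(Q,\opname{Fac}U)=0$ because $Q$ is projective with $\Hom_{\Lambda}(Q,U)=0$; hence the pair corresponding to $\opname{Fac}U$ is a completion, and it differs from the Bongartz one, since $\opname{Fac}U={}^{\perp}(\tau U)\cap Q^{\perp}$ would make $(U,Q)$ itself a support $\tau$-tilting pair with only $|\Lambda|-1$ summands, which is impossible. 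Comparability $T^{-}<T^{+}$ is then immediate from the inclusion of the two endpoint torsion classes. Finally, the functorial finiteness of ${}^{\perp}(\tau U)\cap Q^{\perp}$, which you ``grant,'' is itself the Bongartz-type theorem of \cite{AIR} and must be cited as such, not assumed.
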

In the situation of the above theorem, we say $(T', P')$ is a {\it left mutation} of $(T, P)$ if $T'<T$ and write $\mu_X^-(T)=T'$, where $X$ is the unique indecomposable direct summand of $(T, P)$ which is different from $(T', P')$. In this case, we also call $(T, P)$ is a {\it right mutation} of $(T', P')$ at $X$. Let $T=U\oplus X$ be a support $\tau$-tilting $\Lambda$-module with indecomposable direct summand $X$. It is also known that $T$ has a left mutation at $X$ if and only if $X\not\in \opname{Fac} U$.
 The following result  also gives us a method to calculate left mutations of support $\tau$-tilting modules.
\begin{lemma}\cite[Theorem 2.30]{AIR}\label{mutaion}
 Let $X$ be an indecomposable $\Lambda$-module and $T = X \oplus U$ a basic $\tau$-tilting $\Lambda$-module. Assume that $T$ has a left mutation  $\mu_X^{-}(T)$. Let $$X \xrightarrow{f} U '\xrightarrow{g} Y \to 0$$ be an exact sequence, where $f $ is a minimal left $\operatorname{add} U$-approximation. 
 Then one of the following holds:
 
(1) $Y =0.$  Then $U=\mu_{X}^-(T)$ is a basic support $\tau$-tilting module.
 
(2) $Y\neq 0$ and $ Y \cong Y'^m $ for some integer $m>0$, where $Y '$ is an indecomposable direct summand of $Y$. Then $\mu_X ^-(T ) = Y ' \oplus U$  is a basic $\tau$-tilting module. 
\end{lemma}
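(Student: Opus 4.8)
The plan is to lean on the mutation existence theorem recalled just above, so that the entire problem reduces to the explicit \emph{identification} of the second completion. Since $X$ is indecomposable, $(U,0)$ is a basic almost complete support $\tau$-tilting pair ($|U|=|\Lambda|-1$), and by that theorem it admits exactly two completions: the given $(T,0)=(X\oplus U,0)$ and a strictly smaller one $(T',P')=\mu_{X}^{-}(T)$. My strategy is to build, out of the approximation sequence, a basic support $\tau$-tilting pair that completes $(U,0)$ and is $<T$; by the uniqueness clause it must then coincide with $\mu_{X}^{-}(T)$. First I would form the minimal left $\add U$-approximation $f\colon X\to U'$ (available because $\add U$ is functorially finite) and set $Y=\cok f$, giving $X\xrightarrow{f}U'\xrightarrow{g}Y\to 0$. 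As each indecomposable summand of $Y$ is a quotient of $U'\in\add U$, we have $Y\in\opname{Fac}U$ and hence $\opname{Fac}(U\oplus Y)=\opname{Fac}U$; moreover $\opname{Fac}U\subsetneq\opname{Fac}T$, the inclusion being strict precisely because $X\notin\opname{Fac}U$ (the standing hypothesis guaranteeing a left mutation). This already places the candidate strictly below $T$ in the order.

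The heart of the matter is to verify that the candidate module $U\oplus Y$ is $\tau$-rigid, and this is the step I expect to be the main obstacle. By the Auslander--Smal\o{} characterization, $\tau$-rigidity of $U\oplus Y$ is equivalent (using $\opname{Fac}(U\oplus Y)=\opname{Fac}U$) to the single vanishing $\Ext^{1}_{\Lambda}(U\oplus Y,\opname{Fac}U)=0$. Two inputs are cheap: $\Ext^{1}_{\Lambda}(U,\opname{Fac}U)=0$ holds because $U$ is a summand of the $\tau$-tilting module $T$, and a short diagram chase gives $\Ext^{1}_{\Lambda}(Y,U)=0$ (any $\psi\colon\ker g\to U$ composes with the epimorphism $X\twoheadrightarrow\ker g$ to a map $X\to U$, which factors through $f$ by the approximation property, and cancelling the epimorphism extends $\psi$ along $\ker g\hookrightarrow U'$). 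The genuinely delicate point is to upgrade $\Ext^{1}_{\Lambda}(Y,U)=0$ to $\Ext^{1}_{\Lambda}(Y,\opname{Fac}U)=0$, equivalently to the Hom-vanishings $\Hom_{\Lambda}(U\oplus Y,\tau Y)=0$. Here I would invoke Auslander--Reiten duality $\Ext^{1}_{\Lambda}(A,B)\cong\mathbb{D}\,\overline{\Hom}_{\Lambda}(B,\tau A)$ to recast the problem, feed in the vanishing just obtained together with the left-approximation property of $f$, and carefully control the maps factoring through injective modules when passing between $\Hom$ and $\overline{\Hom}$. This is exactly the point at which the left-approximation hypothesis is used in an essential way, and it is the technical core of the Adachi--Iyama--Reiten argument; the elementary $\Ext$-chases against $U$ alone do not suffice, since upgrading across all of $\opname{Fac}U$ would otherwise stumble on $\Ext^{2}$-terms over an algebra of infinite global dimension.

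Granting $\tau$-rigidity, I would finish by splitting into the two stated cases and invoking uniqueness. Since $U\oplus Y$ is $\tau$-rigid, it is Ext-projective in the torsion class $\opname{Fac}U$, so every indecomposable summand of $U\oplus Y$ is a summand of the support $\tau$-tilting module $T''$ attached to $\opname{Fac}U$; as $T''$ contains $U$ and $\opname{Fac}T''=\opname{Fac}U\neq\opname{Fac}T$, the uniqueness of the two completions of $(U,0)$ forces $T''=\mu_{X}^{-}(T)$. If $Y=0$, then $f$ is an epimorphism and the pair is $(U,P')$ for the indecomposable projective $P'$ singled out by the approximation, with $|U|+|P'|=|\Lambda|$; hence $\mu_{X}^{-}(T)=U$ as a support $\tau$-tilting module. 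If $Y\neq0$, write $\mu_{X}^{-}(T)=U\oplus Y'$ with $Y'$ the unique indecomposable summand outside $\add U$; then $\add(U\oplus Y)\subseteq\add(U\oplus Y')$, and left-minimality of $f$ excludes $\add U$-summands of $Y$, so every indecomposable summand of $Y$ equals $Y'$, giving $Y\cong Y'^{m}$ and $\mu_{X}^{-}(T)=Y'\oplus U$. In both cases the candidate completes $(U,0)$ and lies strictly below $T$, so by the uniqueness in the mutation existence theorem it is indeed $\mu_{X}^{-}(T)$.
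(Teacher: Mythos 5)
First, a point of reference: the paper does not prove this lemma at all --- it is quoted verbatim from \cite[Theorem 2.30]{AIR}, so your proposal can only be measured against the Adachi--Iyama--Reiten argument, not against anything in this paper. Your overall strategy (view $(U,0)$ as a basic almost complete support $\tau$-tilting pair with exactly two completions, build a candidate completion out of the approximation sequence, and conclude by the uniqueness clause of the mutation theorem) is the right shape and matches the AIR proof in outline. The steps you actually carry out are correct: $Y\in\operatorname{Fac}U$, hence $\operatorname{Fac}(U\oplus Y)=\operatorname{Fac}U\subsetneq\operatorname{Fac}T$ because $X\notin\operatorname{Fac}U$; the Auslander--Smal\o{} translation of $\tau$-rigidity; and the factorization chase giving $\Ext^1_{\Lambda}(Y,U)=0$ is valid.

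Nevertheless the proposal has a genuine gap exactly where you flag ``the main obstacle'': the vanishing $\Hom_{\Lambda}(U\oplus Y,\tau Y)=0$, equivalently $\Ext^1_{\Lambda}(Y,\operatorname{Fac}U)=0$, is never proved. You state an intention (``invoke AR duality, feed in the vanishing just obtained, carefully control maps factoring through injectives'') without executing it, and this is precisely the step on which everything downstream depends --- it is what makes $Y$ Ext-projective in $\operatorname{Fac}U$ and hence a summand of the support $\tau$-tilting module attached to that torsion class; as you yourself note, the cheap $\Ext$-chases against $U$ alone do not suffice. An unexecuted plan at the technical core means the lemma is not proved. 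Two further assertions are also unjustified. (i) In case $Y=0$ you must show that $\operatorname{Fac}U$ has \emph{no} Ext-projective outside $\add U$, so that its support $\tau$-tilting module is $U$ itself; your phrase ``the indecomposable projective $P'$ singled out by the approximation'' supplies nothing here, since the projective part of a support $\tau$-tilting pair is determined by the module only \emph{after} one knows the module is support $\tau$-tilting, which is the thing to be shown. (ii) ``Left-minimality of $f$ excludes $\add U$-summands of $Y$'' is not a purely formal consequence of minimality: if $V\in\add U$ were a summand of $Y$, one must first split the epimorphism $U'\twoheadrightarrow V$ off $U'$, which requires $\Ext^1_{\Lambda}(V,\ker(U'\to V))=0$ --- itself extracted from the $\tau$-rigidity of $T$ via Auslander--Smal\o{} --- and only then does the resulting form $f=\begin{pmatrix}0\\ f''\end{pmatrix}$ contradict left minimality. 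So the skeleton is correct, but the proof as written is incomplete at its central step and at both case-identifications.
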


Using the left mutation, one may define the  support $\tau$-tilting quiver of $\Lambda$.
\begin{definition}
 The {\it support $\tau$-tilting quiver} $\mathcal{H}(\opname{s\tau-tilt}\Lambda)$ is defined as follows.

$(1)$ The set of vertices is $\opname{s\tau-tilt}\Lambda$;

$(2)$ Draw an arrow from $T$ to $T'$ if $T'$ is a left mutation of $T$.
\end{definition}
\begin{lemma}\cite[Corollary 2.38]{AIR}\label{connect}
 If $\mathcal{H}(\opname{s\tau-tilt}\Lambda)$  admits a finite connected component $\mathcal{C}$, then $\mathcal{H}(\opname{s\tau-tilt}\Lambda) = \mathcal{C}$.  
\end{lemma}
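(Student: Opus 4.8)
The plan is to translate the statement entirely into order theory, using the identification of $\mathcal{H}(\opname{s\tau-tilt}\Lambda)$ with a Hasse quiver together with the mutation theorem quoted just above. First I would recall that $\mathcal{H}(\opname{s\tau-tilt}\Lambda)$ is exactly the Hasse quiver of the poset $(\opname{s\tau-tilt}\Lambda,\le)$: an arrow $T\to T'$ records a single left mutation $T'=\mu_X^-(T)$, and by definition this means $T'<T$, so every arrow points from the larger to the smaller element and edges encode the covering relations $T'\lessdot T$. In this poset the regular module $\Lambda$ is the unique maximum and $0$ the unique minimum; moreover $\Lambda$ is the unique \emph{source} of $\mathcal{H}(\opname{s\tau-tilt}\Lambda)$, since a vertex admits no right mutation precisely when it equals $\Lambda$. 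Finally, the mutation theorem gives local finiteness: deleting each of the $|\Lambda|$ indecomposable summands of the associated $\tau$-rigid pair and recompleting produces exactly one new module, so every vertex has exactly $|\Lambda|$ neighbours.

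Granting this, let $\mathcal{C}$ be a finite connected component. Since $\mathcal{C}$ is finite it has a $\le$-maximal element $M$, and I claim $M=\Lambda$. All neighbours of $M$ lie again in $\mathcal{C}$ (being in the same component), so by maximality of $M$ none of them is $>M$; thus $M$ has no right mutation, i.e. $M$ is a source of $\mathcal{H}(\opname{s\tau-tilt}\Lambda)$, whence $M=\Lambda$. Therefore $\Lambda\in\mathcal{C}$, and dually the minimal element of $\mathcal{C}$ is $0$, so $0\in\mathcal{C}$.

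It then remains to show that every $T\in\opname{s\tau-tilt}\Lambda$ lies in $\mathcal{C}$. The idea is to join $T$ to $\Lambda$ by a finite chain of covers $T=T_0\lessdot T_1\lessdot\cdots\lessdot T_r=\Lambda$, obtained by iterating right mutation upward from $T$. Each step $T_{i-1}\lessdot T_i$ is an edge of $\mathcal{H}(\opname{s\tau-tilt}\Lambda)$, so starting from $\Lambda\in\mathcal{C}$ and descending along this chain forces $T_{r-1},\dots,T_0=T$ into $\mathcal{C}$. This would give $\mathcal{C}=\opname{s\tau-tilt}\Lambda$, that is, $\mathcal{H}(\opname{s\tau-tilt}\Lambda)=\mathcal{C}$, as desired.

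The main obstacle is precisely this last step. A priori the ascending sequence of right mutations from an arbitrary $T$ need not terminate, because $\opname{s\tau-tilt}\Lambda$ can contain infinite ascending chains (as in the non-Dynkin case), and an infinite component carrying no source is not excluded by the purely order-theoretic argument above. What rescues the proof is the hypothesis that \emph{some} component is finite: this is a strong global constraint that forces the upward mutation process to reach $\Lambda$ in finitely many steps, so that $T$ is genuinely connected to $\Lambda$ within a single component. Making this rigorous is the heart of the matter and is where one must use the $\tau$-tilting mutation theory itself — concretely the bijection between support $\tau$-tilting modules and functorially finite torsion classes, together with the covering behaviour of mutation — rather than any formal property of posets; here I would follow \cite{AIR} closely.
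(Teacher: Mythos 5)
Note first that the paper does not prove this lemma at all: it is quoted verbatim from \cite[Cor.~2.38]{AIR}, so your attempt has to be measured against the argument given there. Your framing (the mutation quiver is the Hasse quiver of $(\opname{s\tau-tilt}\Lambda,\leq)$, with $\Lambda$ the maximum and $0$ the minimum) is correct, and you have honestly located the crux; but as written the proof has two genuine gaps. First, the assertion that ``a vertex admits no right mutation precisely when it equals $\Lambda$'' is not a definition-chase. That $\Lambda$ is the \emph{unique} source of the Hasse quiver is a real theorem: for $T\neq\Lambda$ one has $\Lambda>T$, and one must produce an arrow $U'\to T$ with $U'>T$. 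This is exactly (the dual half of) \cite[Prop.~2.36]{AIR}, and without it ``$M$ maximal in $\mathcal{C}$'' does not yield $M=\Lambda$, since maximality inside one component says nothing about global maximality. Second, and more seriously, your final step points in the wrong direction and is circular: you propose to build the chain $T=T_0<T_1<\cdots<T_r=\Lambda$ by \emph{ascending} via right mutations from an arbitrary $T$, and then to descend along it from $\Lambda\in\mathcal{C}$. But that ascending chain lives in the connected component of $T$, which is precisely what you do not yet know to be finite; the finiteness of $\mathcal{C}$ gives no control over chains in other components, and you concede you cannot prove termination. Invoking torsion classes does not by itself repair this.

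The missing ingredient is the other half of \cite[Prop.~2.36]{AIR}: if $T>U$ in $\opname{s\tau-tilt}\Lambda$, then there is an arrow $T\to T'$ in the Hasse quiver with $T>T'\geq U$ (and dually an arrow $U'\to U$ with $T\geq U'>U$). With this, the proof runs entirely \emph{inside} $\mathcal{C}$ and in the descending direction. Step one: take any $T\in\mathcal{C}$; if $T\neq\Lambda$, the dual statement applied to $\Lambda>T$ gives a neighbour $U'>T$, necessarily in $\mathcal{C}$; iterating produces a strictly increasing chain in the finite set $\mathcal{C}$, which must terminate, and it can only terminate at $\Lambda$; hence $\Lambda\in\mathcal{C}$. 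Step two: given any $U$, apply the first statement to $\Lambda\geq U$ to descend $\Lambda\to T'\to T''\to\cdots$ while maintaining the invariant that every term is $\geq U$; all terms lie in $\mathcal{C}$, the chain is strictly decreasing in a finite set, so it terminates, and by the same proposition it can only terminate at $U$. Hence $U\in\mathcal{C}$ and $\mathcal{H}(\opname{s\tau-tilt}\Lambda)=\mathcal{C}$. So your sketch becomes a proof once you replace the upward induction from $T$ by this downward induction from $\Lambda$ with the invariant $\geq U$, and once you recognize that both the unique-source claim and the ``mutate toward a comparable module while staying on the correct side of it'' step are the substantive content of \cite[Prop.~2.36]{AIR}, not formal poset facts.
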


\subsection{Support $\tau$-tilting modules for preprojective algebras of Dynkin type}
In this subsection, we always assume that $C$ is a symmetrizable Cartan matrix of Dynkin type with a symmetrizer $D$ and $\Pi=\Pi(C,D)$ is the associated preprojective algebra. We follow~\cite{M}.
 By Lemma \ref{selfinjective}, we know that $\Pi$
is a selfinjective algebra and the Nakayama functor  $\nu=\mathbb{D}\Hom_{\Pi}(-,\Pi)$ is exact.

\begin{lemma}\label{indec}
Let $I$ be a two-sided ideal of $\Pi$. For a primitive idempotent $e_i$ for $i \in Q_0$,
$e_iI$ is either indecomposable or zero.
\end{lemma}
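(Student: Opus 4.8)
The plan is to exploit that $\Pi$ is selfinjective in Dynkin type, combined with the elementary fact that a nonzero submodule of a module with simple socle is indecomposable. First I would observe that $e_iI$ is a right $\Pi$-submodule of the indecomposable projective module $e_i\Pi$. Indeed $e_iI\subseteq e_i\Pi$, and for any $x\in I$ and $r\in\Pi$ we have $e_ixr\in e_iI$, since $xr\in I$ because $I$ is a two-sided (in particular right) ideal. Thus the question reduces to understanding submodules of $e_i\Pi$.

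The key point is that $e_i\Pi$ has simple socle. This is immediate from Lemma~\ref{selfinjective}: since $C$ is of Dynkin type, $\Pi$ is selfinjective, so the indecomposable projective right module $e_i\Pi$ is at the same time an indecomposable injective module and hence has simple socle. Concretely $\operatorname{soc}(e_i\Pi)\cong S_{\sigma(i)}$, exactly as was already used in the proof of Proposition~\ref{ci=csigmai}, where $\sigma$ is the Nakayama permutation of $\Pi$.

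It then remains to invoke the following standard observation: a nonzero submodule $N$ of a finite-dimensional module $M$ with simple socle is indecomposable. Since $N\neq 0$ and $N$ is finite-dimensional we have $\operatorname{soc}(N)\neq 0$, and $\operatorname{soc}(N)\subseteq\operatorname{soc}(M)$ forces $\operatorname{soc}(N)=\operatorname{soc}(M)$, which is simple; a nontrivial decomposition $N=N_1\oplus N_2$ with $N_1,N_2\neq 0$ would yield $\operatorname{soc}(N)=\operatorname{soc}(N_1)\oplus\operatorname{soc}(N_2)$ of length at least two, a contradiction. Applying this with $M=e_i\Pi$ and $N=e_iI$ shows that $e_iI$ is either zero or indecomposable, as claimed. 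I do not anticipate a genuine obstacle here; the only step deserving care is the simplicity of $\operatorname{soc}(e_i\Pi)$, and this is supplied directly by selfinjectivity (equivalently, by $e_i\Pi\cong\mathbb{D}(\Pi e_{\sigma(i)})$).
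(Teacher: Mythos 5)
Your proof is correct and follows essentially the same route as the paper's: both use selfinjectivity of $\Pi$ to conclude that $e_i\Pi$ has simple socle, so any nonzero submodule $e_iI$ inherits a simple socle and is therefore indecomposable. Your write-up merely makes explicit the standard socle argument that the paper leaves implicit.
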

\begin{proof}
 Since $\Pi$ is selfinjective, $e_i\Pi$ has a unique simple socle. If the submodule $e_iI$ is non-zero, then it has a unique simple socle as $e_i\Pi$ and hence indecomposable.
\end{proof}

An easy consequence of the above result is that for $i\neq j$,  $e_iI$ and $e_jI$ are non-isomorphic provided that they are not both zero.
\begin{proposition}~\label{Eiistaurigid}
For each $i\in Q_0$, the generalized simple module  $E_i$ and the two-sided ideal $I_i$ are  $\tau$-rigid modules.
\end{proposition}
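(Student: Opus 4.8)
The plan is to exploit that $\Pi$ is selfinjective (Lemma~\ref{selfinjective}), so that the Auslander--Reiten translate is computed as $\tau\cong\nu\Omega^2$, where $\Omega$ is the syzygy functor and $\nu$ the exact Nakayama functor, which moreover is an autoequivalence of $\mod\Pi$. The whole argument then reduces to reading off the low syzygies of $E_i$ from the projective resolution supplied by Lemma~\ref{lem of EI resolution} in the Dynkin case,
\[0\to E_{\sigma(i)}\to e_i\Pi\xrightarrow{\alpha}\bigoplus_{j\in\overline{\Omega}(i,-)}(e_j\Pi)^{|c_{ji}|}\xrightarrow{\beta} e_i\Pi\to E_i\to 0.\]
Since $e_i\Pi\to E_i$ is a projective cover, this is the beginning of a minimal projective resolution, and I read off $\Omega E_i\cong e_iI_i$, an embedding $\Omega^2 E_i=\im\alpha\hookrightarrow\bigoplus_{j\in\overline{\Omega}(i,-)}(e_j\Pi)^{|c_{ji}|}$, and $\Omega^3 E_i\cong E_{\sigma(i)}$.

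For $I_i$ the computation is then immediate. Writing $I_i=e_iI_i\oplus\bigoplus_{j\neq i}e_j\Pi$ and using that $\tau$ annihilates projectives, I obtain $\tau I_i=\tau(e_iI_i)=\tau\,\Omega E_i=\nu\,\Omega^3 E_i=\nu E_{\sigma(i)}\cong E_i$, the last isomorphism being Proposition~\ref{ci=csigmai}. Hence $\Hom_\Pi(I_i,\tau I_i)\cong\Hom_\Pi(I_i,E_i)=0$, using the identity $\Hom_\Pi(I_i,E_i)=0$ established at the end of Section~\ref{S:basic-notions}, and so $I_i$ is $\tau$-rigid.

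For $E_i$ I would avoid stable-$\Hom$ subtleties (which the alternative route through the Auslander--Reiten formula $\underline{\Hom}_\Pi(E_i,\tau E_i)\cong\mathbb{D}\Ext^1_\Pi(E_i,E_i)$ would force me to confront) by using instead that $\nu$ is an autoequivalence with $\nu^{-1}E_i\cong E_{\sigma(i)}$ (again Proposition~\ref{ci=csigmai}). From $\tau E_i\cong\nu\Omega^2 E_i$ this gives
\[\Hom_\Pi(E_i,\tau E_i)\cong\Hom_\Pi(E_i,\nu\Omega^2 E_i)\cong\Hom_\Pi(E_{\sigma(i)},\Omega^2 E_i).\]
I then apply $\Hom_\Pi(E_{\sigma(i)},-)$ to the embedding $\Omega^2 E_i\hookrightarrow\bigoplus_{j\in\overline{\Omega}(i,-)}(e_j\Pi)^{|c_{ji}|}$, reducing everything to showing $\Hom_\Pi(E_{\sigma(i)},e_j\Pi)=0$ for each $j\in\overline{\Omega}(i,-)$. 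Here the locally free structure enters: $E_{\sigma(i)}$ is uniserial with every composition factor $S_{\sigma(i)}$, so the image of any nonzero map into $e_j\Pi$ would have socle $S_{\sigma(i)}$ lying inside the simple socle $S_{\sigma(j)}$ of the projective $e_j\Pi$, forcing $i=j$; since every $j\in\overline{\Omega}(i,-)$ satisfies $j\neq i$, all these Hom-groups vanish and $E_i$ is $\tau$-rigid.

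The step I expect to need the most care is the combination $\tau\cong\nu\Omega^2$ with the minimality of the displayed resolution: I must confirm that each of its maps is a radical (projective-cover) map so that the syzygies carry no spurious projective summands, in particular that $\Omega^3 E_i\cong E_{\sigma(i)}$ on the nose, and that $E_i$ and $e_iI_i$ are genuinely non-projective so the formula for $\tau$ applies at the level of modules and not merely in the stable category. Once these points are pinned down, everything else is short socle/top bookkeeping justified by selfinjectivity of $\Pi$ and the uniseriality of the $E_i$.
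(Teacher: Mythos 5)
Your proof is correct and essentially the paper's own argument: both compute $\tau$ from the exact Nakayama functor applied to the minimal resolution of Lemma~\ref{lem of EI resolution}, identify $\tau I_i\cong \nu E_{\sigma(i)}\cong E_i$ via Proposition~\ref{ci=csigmai} so that $\Hom_{\Pi}(I_i,\tau I_i)=\Hom_{\Pi}(I_i,E_i)=0$, and kill $\Hom_{\Pi}(E_i,\tau E_i)$ by embedding $\tau E_i$ into a sum of projectives at vertices $j\neq i$ (the paper checks $\Hom_{\Pi}(E_i,\nu(e_j\Pi))=0$ directly by support, while you transfer across $\nu^{-1}$ and use a socle argument --- the same computation, and your minimality checks for the syzygies are exactly what the paper's use of minimal presentations presupposes). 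The only point you flag without settling is the degenerate case $e_iI_i=0$ (an isolated vertex, where $E_i=e_i\Pi$ and $I_i$ are projective), which the paper dispatches separately at the outset and where the claim is trivial.
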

\begin{proof}
Recall that $I_i=\bigoplus\limits_{j=1}^n e_jI_i=e_iI_i\oplus( \bigoplus\limits_{j\neq i} e_j\Pi)$. If $e_iI_i=e_i\Pi(1-e_i)\Pi=0$, we deduce that $i$ is an  isolated vertex. In this case, we have $I_i=\bigoplus_{j\neq i} e_j\Pi$ and $E_i=e_i\Pi$, then the result is obvious. 

Now assume that $e_iI_i\neq 0.$
 By Lemma \ref{lem of EI resolution}, for any $i$, we have an exact sequence 
 \begin{eqnarray}0\to E_{\sigma(i)}\to e_i\Pi\xrightarrow{a} \bigoplus_{j\in \overline{\Omega}(i,-)}(e_j\Pi)^{|c_{ji}|}\xrightarrow{b} e_i\Pi\xrightarrow{c} E_i\to 0,\notag\end{eqnarray}
 where $\ker c=e_iI_i.$ 
Applying the exact functor  $\nu=\mathbb{D}\Hom_{\Pi}(-,\Pi)$ to the above exact sequence,  one obtains the following exact sequence
\begin{eqnarray}0\to \nu({E_{\sigma(i)}})\to \nu({e_i\Pi})\xrightarrow{\nu(a)} \bigoplus_{j\in \overline{\Omega}(i,-)}\nu({e_j\Pi})^{|c_{ji}|}\xrightarrow{\nu(b)}\nu({e_i\Pi})\xrightarrow{\nu(c)} \nu({E_i})\to 0,\notag\end{eqnarray}
By the definition of the functor $\tau$, we know that $\tau(E_i)=\ker \nu(b)$ and $\tau(e_iI_i)=\ker \nu(a)=\nu({E_{\sigma(i)}})$.
Recall that we have $\nu(E_{\sigma(i)})= E_i$ by Proposition \ref{ci=csigmai}.
Therefore,  $\tau(I_i)=\tau(e_iI_i\oplus( \bigoplus_{j\neq i} e_j\Pi))=\tau(e_iI_i)= E_i $. Consequently, 
  $\Hom_{\Pi}(I_i,\tau(I_i))= \Hom_{\Pi}(I_i,E_i) =0$  and $I_i$ is a $\tau$-rigid module.
  
  To show $\Hom_{\Pi}(E_i, \tau E_i)=0$, note that \[\Hom_{\Pi}(E_i,\bigoplus\limits_{j\in \overline{\Omega}(i,-)}\nu({e_j\Pi})^{|c_{ji}|})\cong \Hom_{\Pi}((\Pi~e_j)^{|c_{ji}|}, \mathbb{D}(E_i))=0.\]
We conclude that $\Hom_{\Pi}(E_i,\tau(E_i))=\Hom_{\Pi}(E_i,\ker\nu(b))=0$ and $E_i$ is $\tau$-rigid.
\end{proof}
We have the following easy consequence of~Proposition~\ref{Eiistaurigid}.
\begin{proposition}
For each $i\in Q_0$,  the two-sided ideal $I_i$ is a basic support $\tau$-tilting ideal.
\end{proposition}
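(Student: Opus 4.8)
The plan is to exhibit $I_i$, together with a suitable projective module $P$, as a basic support $\tau$-tilting pair in the sense of the preceding definition. Recall from the decomposition $I_i = e_iI_i\oplus(\bigoplus_{j\neq i}e_j\Pi)$ and from Proposition~\ref{Eiistaurigid} that $I_i$ is already known to be $\tau$-rigid; what remains is to check that $I_i$ is basic and that the pair $(I_i,P)$ satisfies the numerical condition $|I_i|+|P|=|\Pi|$ together with $\Hom_{\Pi}(P,I_i)=0$. By \cite[Prop 2.3]{AIR} this is equivalent to $I_i$ being support $\tau$-tilting, so these are the only points to verify.

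First I would establish that $I_i$ is basic. The summands $e_j\Pi$ with $j\neq i$ are pairwise non-isomorphic indecomposable projectives, and by Lemma~\ref{indec} the summand $e_iI_i$ is either zero or indecomposable. Moreover, since $e_jI_i=e_j\Pi$ for $j\neq i$, the consequence of Lemma~\ref{indec} noted above shows that $e_iI_i\not\cong e_j\Pi$ whenever $e_iI_i\neq 0$. Hence all indecomposable summands appearing in $I_i$ are pairwise non-isomorphic, so $I_i$ is basic.

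Next I would split into two cases according to whether $e_iI_i$ vanishes. If $e_iI_i\neq 0$, then $I_i$ has exactly $n=|\Pi|$ pairwise non-isomorphic indecomposable summands; being $\tau$-rigid, it is therefore a $\tau$-tilting module, and taking $P=0$ gives the support $\tau$-tilting pair $(I_i,0)$. If instead $e_iI_i=0$ (which happens exactly when $i$ is an isolated vertex, as observed in the proof of Proposition~\ref{Eiistaurigid}), then $I_i=\bigoplus_{j\neq i}e_j\Pi$ has $n-1$ indecomposable summands; here I would take $P=e_i\Pi$ and use $\Hom_{\Pi}(e_i\Pi,I_i)\cong e_iI_i=0$ to conclude that $(I_i,e_i\Pi)$ is a $\tau$-rigid pair with $|I_i|+|e_i\Pi|=n=|\Pi|$, hence a support $\tau$-tilting pair.

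Since the substantive input---the $\tau$-rigidity of $I_i$---is already supplied by Proposition~\ref{Eiistaurigid}, no genuine obstacle remains; the only care needed is the bookkeeping distinguishing the two cases above and the correct choice of the projective part $P$ so that the numerical condition $|I_i|+|P|=|\Pi|$ holds.
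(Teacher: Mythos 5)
Your proof is correct and takes essentially the same route as the paper's: $\tau$-rigidity is imported from Proposition~\ref{Eiistaurigid}, and the case split on $e_iI_i=0$ (isolated vertex, projective part $e_i\Pi$) versus $e_iI_i\neq 0$ (where $|I_i|=|\Pi|$ gives a $\tau$-tilting module) matches the paper, with your explicit basicness check via Lemma~\ref{indec} merely spelling out what the paper leaves implicit in ``it is clear that $|I_i|=|\Pi|$''. One cosmetic slip: for right modules $\Hom_{\Pi}(e_i\Pi,I_i)\cong I_ie_i$, not $e_iI_i$, but since $i$ is isolated in that case both spaces vanish, so nothing breaks.
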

\begin{proof}
If $I_i=0$, then $I_i$ is obviously a basic support $\tau$-tilting module. 

Now  suppose that  $I_i\neq 0$.
 If  $e_iI_i=0,$  then $i$ is an  isolated vertex and  $(I_i,e_i\Pi)=(\bigoplus\limits_{j\neq i} e_j\Pi, e_i\Pi)$ is a support $\tau$-tilting pair, hence $I_i$ is a support $\tau$-tilting module.
If $e_iI_i\neq 0$,  it is clear that $|I_i|=|\Pi|$. Consequently, $I_i$ is a $\tau$-tilting module by Propposition \ref{Eiistaurigid}.
\end{proof}

By~\cite[Prop 2.5]{AIR}, for each $\tau$-rigid module $X$,  we can take a minimal projective presentation $P_1\to
P_0 \to X \to 0$ such that $P_0$ and $P_1$ do not have a common direct summand. Then similar to  Lemma \ref{lem of T times E}, one can prove
\begin{lemma}\label{TotimesEi}
 Let $T$ be a support $\tau$-tilting $\Pi$-module. For any generalized simple $\Pi^{op}$-module $E_i$, at least
one of the statements $T \otimes_{\Pi}E_i = 0$ and $\Tor_{\Pi}^1(T, E_i) = 0$ holds.
\end{lemma}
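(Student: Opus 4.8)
The plan is to mimic the proof of Lemma~\ref{lem of T times E} almost verbatim, replacing the tilting module there by the support $\tau$-tilting module $T$. The one point of genuine difference is that a support $\tau$-tilting module need not have projective dimension at most one, so I cannot start from a two-term projective \emph{resolution}. Instead I invoke~\cite[Prop 2.5]{AIR}, which (as recalled just before the statement) lets me choose a minimal projective \emph{presentation}
\[
P_1 \xrightarrow{\,p\,} P_0 \to T \to 0
\]
with $P_0$ and $P_1$ having no common indecomposable direct summand. This minimality is the crucial input; everything else is formal.

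First I would apply the right-exact functor $-\otimes_{\Pi} E_i$ to this presentation. This yields an exact sequence
\[
P_1 \otimes_{\Pi} E_i \xrightarrow{\,p\,\otimes\,\id\,} P_0 \otimes_{\Pi} E_i \to T\otimes_{\Pi} E_i \to 0,
\]
so that $T\otimes_{\Pi} E_i$ is the cokernel of $p\otimes\id$. Next I decompose each $P_k$ into indecomposable projectives $e_j\Pi$ and use Proposition~2.2(b), namely $e_j\Pi\otimes_{\Pi}E_i = 0$ for $j\neq i$ and $e_i\Pi\otimes_{\Pi}E_i\cong E_i$. Consequently $P_k\otimes_{\Pi}E_i$ is a direct sum of copies of $E_i$, one copy for each summand $e_i\Pi$ of $P_k$. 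Since $P_0$ and $P_1$ share no common summand, at most one of them contains $e_i\Pi$ as a summand, so at least one of $P_0\otimes_{\Pi}E_i$ and $P_1\otimes_{\Pi}E_i$ is zero.

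Now I split into the two cases. If $P_0\otimes_{\Pi}E_i = 0$, then from the right-exact sequence above the cokernel $T\otimes_{\Pi}E_i$ is zero. If instead $P_1\otimes_{\Pi}E_i = 0$, then $p\otimes\id$ has source zero; but $\Tor_1^{\Pi}(T,E_i)$ is computed as a subquotient of $P_1\otimes_{\Pi}E_i$ (it is the homology at the $P_1\otimes E_i$ spot of the complex obtained from the presentation, or one notes directly that $\Tor_1$ injects into $P_1\otimes_{\Pi}E_i$ once one passes to an actual resolution refining the presentation), and hence $\Tor_1^{\Pi}(T,E_i)=0$. Either way one of the two vanishing statements holds, which is exactly the claim.

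I expect the only delicate step to be the bookkeeping in the second case: unlike in Lemma~\ref{lem of T times E}, the map $P_1\to P_0$ is merely the start of a presentation rather than an honest resolution, so the identification of $\Tor_1^{\Pi}(T,E_i)$ with a subobject of $P_1\otimes_{\Pi}E_i$ needs a short justification. The clean way to handle this is to extend the presentation to a minimal projective resolution $\cdots\to P_2\to P_1\to P_0\to T\to 0$; minimality guarantees that $P_1$ still contains all the indecomposable summands appearing at the first syzygy, and $\Tor_1^{\Pi}(T,E_i)$ is the homology at $P_1\otimes_{\Pi}E_i$, which vanishes as soon as $P_1\otimes_{\Pi}E_i=0$. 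This is the same minimality trick that makes the tilting-module argument work, transported to the $\tau$-tilting setting via~\cite[Prop 2.5]{AIR}.
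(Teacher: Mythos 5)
Your proposal is correct and follows essentially the same route as the paper, which likewise invokes \cite[Prop 2.5]{AIR} for a minimal projective presentation with $P_0$ and $P_1$ sharing no common summand and then repeats the argument of Lemma~\ref{lem of T times E}. Your extra remark---that one must extend the presentation to a genuine projective resolution to identify $\Tor_1^{\Pi}(T,E_i)$ as (a subquotient of) the homology at $P_1\otimes_{\Pi}E_i$, since $\opname{proj.dim} T\leq 1$ is no longer available---is precisely the point the paper leaves implicit, and you handle it correctly.
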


Similar to Proposition \ref{chengji is tilting},  applying the functor $T\otimes_{\Pi}-$
to the exact sequence \[0\to I_i\xrightarrow{f}\Pi\to E_i\to 0,\] we  obtain an exact sequence
 \[\Tor^{\Pi}_1(T,E_i)\to T\otimes _{\Pi}I_i\xrightarrow{T\otimes _{\Pi}f} T\otimes _{\Pi}\Pi\to T\otimes _{\Pi}E_i\to 0.\]
Note that $\im(T\otimes _{\Pi}f)=TI_i$, we have $T \otimes_{\Pi}E_i = 0$ if and only if $TI_i=T$.
In particular, 
\begin{lemma}\label{TI_i}
Let $T$ be a support $\tau$-tilting $\Pi$-module. If $TI_i \neq T$, then $T \otimes_{\Pi}I_i \cong
TI_i$. 
\end{lemma}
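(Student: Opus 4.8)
The statement to prove is Lemma~\ref{TI_i}: if $T$ is a support $\tau$-tilting $\Pi$-module with $TI_i \neq T$, then $T\otimes_{\Pi}I_i \cong TI_i$.

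The plan is to read off the conclusion directly from the exact sequence already displayed immediately before the statement, together with Lemma~\ref{TotimesEi}. Recall that applying $T\otimes_{\Pi}-$ to the short exact sequence of bimodules $0\to I_i\xrightarrow{f}\Pi\to E_i\to 0$ produces the exact sequence
\[
\Tor^{\Pi}_1(T,E_i)\to T\otimes_{\Pi}I_i\xrightarrow{T\otimes_{\Pi}f} T\otimes_{\Pi}\Pi\to T\otimes_{\Pi}E_i\to 0,
\]
and that the image of $T\otimes_{\Pi}f$ is precisely $TI_i$, so that $T\otimes_{\Pi}E_i=0$ is equivalent to $TI_i=T$.

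First I would use the hypothesis: since $TI_i\neq T$, the equivalence just recalled forces $T\otimes_{\Pi}E_i\neq 0$. Then, invoking Lemma~\ref{TotimesEi} (the dichotomy that at least one of $T\otimes_{\Pi}E_i=0$ and $\Tor^{\Pi}_1(T,E_i)=0$ holds), I conclude that $\Tor^{\Pi}_1(T,E_i)=0$. Feeding this vanishing back into the four-term exact sequence, the leftmost term is zero, so the map $T\otimes_{\Pi}f$ is injective.

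Finally, an injective map has kernel zero, so $T\otimes_{\Pi}I_i$ maps isomorphically onto its image $\im(T\otimes_{\Pi}f)=TI_i$; that is, $T\otimes_{\Pi}I_i\cong TI_i$, as required. There is essentially no obstacle here: the lemma is an immediate corollary of the displayed computation and Lemma~\ref{TotimesEi}, and mirrors exactly part $(b)$ of Proposition~\ref{chengji is tilting}. The only point deserving a word of care is the identification $\im(T\otimes_{\Pi}f)=TI_i$, but this is the same observation already made in the paragraph preceding the statement, so it may simply be cited rather than reproved.
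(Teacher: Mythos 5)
Your proof is correct and is exactly the paper's argument: the paper derives the lemma from the same four-term exact sequence (noting $\im(T\otimes_{\Pi}f)=TI_i$, so $T\otimes_{\Pi}E_i=0$ if and only if $TI_i=T$) combined with the dichotomy of Lemma~\ref{TotimesEi}, which forces $\Tor^{\Pi}_1(T,E_i)=0$ and hence the injectivity of $T\otimes_{\Pi}f$. Nothing further is needed.
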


For  an ideal  $T$ of $\Pi$, if  $e_iT=0$, then $I_iT=\Pi(1-e_i)\Pi T=T$.  Thus if $I_iT\neq T$, then $e_iT\neq 0$.
The same proof of~\cite[Lemma 2.8]{M} yields the following.
\begin{lemma}\label{has left mutation}
Let $T$ be a support $\tau$-tilting ideal of $\Pi$. If $I_iT \neq T$, then we have $e_iT \notin
\opname{Fac}((1-e_i)T)$.  In particular, $T$ has a left mutation
$\mu^-_{e_iT}(T).$
\end{lemma}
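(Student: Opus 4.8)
Let $T$ be a support $\tau$-tilting ideal of $\Pi$ with $I_iT\neq T$. We must show $e_iT\notin\opname{Fac}((1-e_i)T)$, which by the mutation theory recalled above guarantees that $T$ has a left mutation $\mu^-_{e_iT}(T)$.

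My plan is as follows. First I would record the decomposition $T=e_iT\oplus(1-e_i)T=e_iT\oplus\bigoplus_{j\neq i}e_jT$ as a right $\Pi$-module, so that $e_iT$ is the indecomposable summand (by Lemma~\ref{indec}, $e_iT$ is indecomposable since $I_iT\neq T$ forces $e_iT\neq 0$, using the observation preceding this lemma) at which we hope to mutate, and $(1-e_i)T$ is the complementary summand. The general criterion from \cite{AIR} (recalled in the text just before Lemma~\ref{mutaion}) says that $T=e_iT\oplus(1-e_i)T$ admits a left mutation at the summand $e_iT$ precisely when $e_iT\notin\opname{Fac}((1-e_i)T)$; so the whole content is to verify this non-membership.

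The key step is to argue by contradiction: suppose $e_iT\in\opname{Fac}((1-e_i)T)$. Then there is a surjection $\bigl((1-e_i)T\bigr)^m\twoheadrightarrow e_iT$ for some $m$. I would look at the top of $e_iT$. Since $e_iT$ is a quotient of $e_i\Pi$, its top is $S_i$; any surjection onto $e_iT$ must therefore carry the top $S_i$, forcing $S_i$ to appear in the top of $(1-e_i)T=\bigoplus_{j\neq i}e_jT$. But each $e_jT$ is a quotient of $e_j\Pi$, whose top is $S_j$ with $j\neq i$, so $S_i$ cannot occur in $\opname{top}((1-e_i)T)$. This contradiction rules out $e_iT\in\opname{Fac}((1-e_i)T)$. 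This is exactly the argument of \cite[Lemma 2.8]{M}, and it transfers verbatim because the only facts it uses are that $\Pi$ is selfinjective (hence each $e_j\Pi$ is local with simple top $S_j$ by Lemma~\ref{selfinjective} and Lemma~\ref{indec}) and that a module lies in $\opname{Fac} M$ only if its top is dominated by the top of $M$; both hold in our symmetrizable setting. Once the non-membership is established, Theorem~2.18 (the mutation existence theorem of \cite{AIR} recalled above) immediately yields the left mutation $\mu^-_{e_iT}(T)$.

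The main obstacle, such as it is, is purely bookkeeping: one must be careful that $e_iT$ is genuinely an \emph{indecomposable} summand of $T$ (so that ``mutation at $e_iT$'' makes sense), which is where Lemma~\ref{indec} and the hypothesis $I_iT\neq T$ (equivalently $e_iT\neq 0$) enter. The top-comparison argument itself is elementary once selfinjectivity gives that every $e_j\Pi$ has simple top $S_j$; no special feature of locally free modules or generalized simples is needed beyond what is already in place, so the proof is genuinely the same as in the classical case of \cite{M}.
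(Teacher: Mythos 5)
Your framing — reduce to the criterion $e_iT\notin\opname{Fac}((1-e_i)T)$, use Lemma~\ref{indec} and the observation that $I_iT\neq T$ forces $e_iT\neq 0$ to make mutation at $e_iT$ meaningful — is fine, but the top-comparison step at the heart of your proof rests on a false premise. The module $e_jT$ is a \emph{submodule} of $e_j\Pi$ (the $j$-th component of the right ideal $T\subseteq\Pi$), not a quotient, so its top need not be $S_j$. What selfinjectivity actually gives (and what Lemma~\ref{indec} uses) is that $e_jT$ has simple \emph{socle} $S_{\sigma(j)}$; its top is typically a sum of other simples. Indeed, the projective presentation $\bigoplus_{j\in\overline{\Omega}(i,-)}(e_j\Pi)^{|c_{ji}|}\to e_iI_i\to 0$ from Section~2 shows that $\opname{top}(e_iI_i)$ contains no copy of $S_i$ at all, and in Example~\ref{eg1} one has $e_1I_1\cong E_2$, with top $S_2$. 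Worse, the obstruction you want to exploit simply does not exist: take $T=I_2=e_1\Pi\oplus e_2I_2$ and $i=1$ in Example~\ref{eg1}. Then $I_1T\neq T$, the summand $e_1T=e_1\Pi$ has top $S_1$, and $(1-e_1)T=e_2I_2\cong E_1$ \emph{also} has top $S_1$, so no contradiction can be extracted from tops — although the conclusion $e_1T\notin\opname{Fac}(E_1)$ is still true (any quotient of $E_1^m$ has only $S_1$ as composition factor). So the argument is not ``the same as in the classical case'': it already fails for Mizuno's preprojective algebras, which is why his Lemma~2.8 is proved differently.

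The proof the paper invokes runs through injectivity rather than tops, and uses the two-sided ideal structure essentially. Suppose $e_iT\in\opname{Fac}((1-e_i)T)$, so that $e_iT$ is the sum of images of homomorphisms $(1-e_i)T\to e_iT$. Since $\Pi$ is selfinjective (Lemma~\ref{selfinjective}), $e_i\Pi$ is injective, so every homomorphism $(1-e_i)T\to e_iT\subseteq e_i\Pi$ extends to a map $(1-e_i)\Pi\to e_i\Pi$, and such a map is left multiplication by an element of $e_i\Pi(1-e_i)$. Hence $e_iT\subseteq e_i\Pi(1-e_i)T\subseteq e_i\Pi(1-e_i)\Pi T=e_iI_iT\subseteq e_iT$, forcing $e_iT=e_iI_iT$. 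Since $e_jI_i=e_j\Pi$ for $j\neq i$, we also have $(1-e_i)I_iT=(1-e_i)T$, and therefore $I_iT=e_iI_iT\oplus(1-e_i)I_iT=e_iT\oplus(1-e_i)T=T$, contradicting the hypothesis. Note how the contradiction is with $I_iT\neq T$ itself — the ideal-theoretic hypothesis is consumed by the argument, whereas in your proposal it only served to guarantee $e_iT\neq 0$; that mismatch is a symptom of the gap.
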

Recall that we have the following two exact sequences 
\begin{eqnarray}0\ra E_{\sigma(i)}\to e_i\Pi\to \bigoplus_{j\in \overline{\Omega}(i,-)}(e_j\Pi)^{|c_{ji}|},\label{Esigmai}\end{eqnarray} and
\begin{eqnarray} e_i\Pi\xrightarrow{a} \bigoplus_{j\in \overline{\Omega}(i,-)}(e_j\Pi)^{|c_{ji}|}\to e_iI_i\to 0,\label{eiIi}\end{eqnarray}
which are obtained from the exact sequence (\ref{resolution of E_I dynkin case}).
The following is an analogue of Lemma~$2.9$ in~\cite{M}.
\begin{lemma}\label{left approximation}
Let $T$ be a support $\tau$-tilting ideal of $\Pi$.  The map $a\otimes_{\Pi}T$ 
\[ e_i\Pi\otimes_{\Pi}T\xrightarrow{a\otimes_{\Pi}T} \bigoplus_{j\in\overline{\Omega}(i,-)}(e_j\Pi)^{|c_{ji}|}\otimes_{\Pi}T\]
is a left $\add((1-e_i)T)$-approximation.
\end{lemma}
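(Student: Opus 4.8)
The plan is to reduce the assertion to a concrete factorization problem about paths and to solve it using the selfinjectivity of $\Pi$ together with the commutativity relations $(P_2)$. First I would use the canonical isomorphisms $e_j\Pi\otimes_\Pi T\cong e_jT$ (given by multiplication) to identify $a\otimes_\Pi T$ with the restriction to $e_iT$ of the left-multiplication operator $a$. Since every $j\in\overline{\Omega}(i,-)$ satisfies $c_{ij}<0$ and hence $j\neq i$, and since each $e_jT$ is indecomposable or zero by Lemma~\ref{indec}, the target $\bigoplus_{j\in\overline{\Omega}(i,-)}(e_jT)^{|c_{ji}|}$ indeed lies in $\add((1-e_i)T)$; moreover, because $T$ is a two-sided ideal, $a$ carries $e_iT$ into this target. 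Thus it remains to check that every morphism $f\colon e_iT\to e_lT$ with $l\neq i$ factors through $a\otimes_\Pi T$, and since the target splits it suffices to treat a single indecomposable $e_lT$.

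Next I would lift $f$ to the projective level. Composing $f$ with the inclusion $e_lT\hookrightarrow e_l\Pi$ gives $\bar f\colon e_iT\to e_l\Pi$; as $C$ is of Dynkin type, $\Pi$ is selfinjective (Lemma~\ref{selfinjective}), so $e_l\Pi$ is injective and $\bar f$ extends along $e_iT\hookrightarrow e_i\Pi$ to some $\tilde f\colon e_i\Pi\to e_l\Pi$. Under $\Hom_\Pi(e_i\Pi,e_l\Pi)\cong e_l\Pi e_i$ this $\tilde f$ is left multiplication by an element $\lambda\in e_l\Pi e_i$, that is, a linear combination of paths from $i$ to $l$.

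The heart of the argument, and the step I expect to be the main obstacle, is to factor $\lambda$ (hence $\tilde f$) through $a$ at the level of $e_i\Pi$. Here I would use that, by~\cite[Prop.~12.1]{GLS14}, the components of $a$ are left multiplication by the path-elements $a^{(g)}_{ji}\varepsilon_i^{f}$ with $j\in\overline{\Omega}(i,-)$, $1\le g\le g_{ij}$ and $0\le f\le f_{ji}-1$. Because $l\neq i$, every path occurring in $\lambda$ must eventually leave $i$ through some non-loop arrow $a^{(g)}_{ji}$, and the relation $(P_2)$ in the form $a^{(g)}_{ji}\varepsilon_i^{f_{ji}}=\varepsilon_j^{f_{ij}}a^{(g)}_{ji}$ lets me pull excess powers of $\varepsilon_i$ to the left until the trailing loop power is reduced below $f_{ji}$. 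This rewrites every such path as $\mu\cdot a^{(g)}_{ji}\varepsilon_i^{f}$ for suitable $\mu\in e_l\Pi e_j$ and $0\le f\le f_{ji}-1$, so $\lambda$ lies in the image of precomposition with $a$; collecting the $\mu$'s into a map $\beta\colon\bigoplus_{j\in\overline{\Omega}(i,-)}(e_j\Pi)^{|c_{ji}|}\to e_l\Pi$ (left multiplication) gives $\tilde f=\beta\circ a$.

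Finally I would descend this factorization to $T$. Since $T$ is a two-sided ideal, $\beta$ maps $\bigoplus_{j}(e_jT)^{|c_{ji}|}$ into $e_lT$, and restricting $\tilde f=\beta\circ a$ to $e_iT$ yields $\iota_l\circ f=\beta\circ(a\otimes_\Pi T)$, where $\iota_l\colon e_lT\hookrightarrow e_l\Pi$ is the inclusion; as $\iota_l$ is injective this gives $f=\beta|\circ(a\otimes_\Pi T)$, a factorization through a map in $\add((1-e_i)T)$, which is exactly the approximation property. I expect the only delicate points to be bookkeeping: confirming that the identifications $e_j\Pi\otimes_\Pi T\cong e_jT$ turn $a\otimes_\Pi T$ into the restriction of $a$, and pinning down the explicit components of $a$ from~\cite{GLS14} so the relation-based reduction applies verbatim. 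Observe that, beyond the target lying in $\add((1-e_i)T)$, the argument only uses that $T$ is a two-sided ideal and that $\Pi$ is selfinjective.
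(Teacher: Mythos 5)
Your proposal is correct, and its skeleton coincides with the paper's proof, which after one preparatory step simply invokes the argument of Lemma~2.9 of~\cite{M}: identify $a\otimes_{\Pi}T$ with the restriction of $a$ to $e_iT$ via $e_j\Pi\otimes_\Pi T\cong e_jT$, extend a given map $e_iT\to e_lT\hookrightarrow e_l\Pi$ to $e_i\Pi\to e_l\Pi$ by selfinjectivity (Lemma~\ref{selfinjective}), factor it through $a$ at the projective level, and descend using that $T$ is a two-sided ideal. The one genuinely different ingredient is how you obtain the key surjectivity of $\Hom_{\Pi}\bigl(\bigoplus_{j\in\overline{\Omega}(i,-)}(e_j\Pi)^{|c_{ji}|},(1-e_i)\Pi\bigr)\to\Hom_{\Pi}(e_i\Pi,(1-e_i)\Pi)$ induced by precomposition with $a$. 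The paper gets it homologically: by Lemma~\ref{lem of EI resolution} the kernel of $a$ in the sequence (\ref{Esigmai}) is $E_{\sigma(i)}$, the socle argument gives $\Hom_{\Pi}(E_{\sigma(i)},(1-e_i)\Pi)=0$, and injectivity of $(1-e_i)\Pi$ does the rest — no explicit form of $a$ is ever needed, only its kernel. You instead argue combinatorially: the components of $a$ are left multiplications by the monomials $a^{(g)}_{ji}\varepsilon_i^{f}$, $0\le f\le f_{ji}-1$ (this is indeed what Proposition~12.1 of~\cite{GLS14} yields, up to signs, which are harmless for surjectivity), and since every path from $i$ to $l\neq i$ has the form $\mu\, a^{(g)}_{ji}\varepsilon_i^{m}$, the relation $(P_2)$ in the form $a^{(g)}_{ji}\varepsilon_i^{f_{ji}}=\varepsilon_j^{f_{ij}}a^{(g)}_{ji}$ reduces $m$ below $f_{ji}$, proving surjectivity directly. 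Your route buys elementarity and independence from Lemma~\ref{lem of EI resolution} (in particular from the Nakayama permutation $\sigma$ and Proposition~\ref{ci=csigmai}), at the cost of having to pin down the exact monomial components of $a$; the paper's route is shorter given the machinery already established and is insensitive to the sign and indexing conventions in the presentation of $E_i$. Your closing remark that only the two-sided-ideal property and selfinjectivity are used is accurate and applies equally to the paper's proof.
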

\begin{proof} 
Since the socle of $e_j\Pi$ is the simple module $S_{\sigma(j)}$,  we have  $\Hom_{\Pi}(E_{\sigma(i)},(1-e_i)\Pi)=0$.  By applying the functor $\Hom_{\Pi}(-,(1-e_i)\Pi)$ to the exact sequence (\ref{Esigmai}), we obtain a surjective map $\Hom_{\Pi}( \bigoplus\limits_{j\in \overline{\Omega}(i,-)}(e_j\Pi)^{|c_{ji}|},(1-e_i)\Pi)\to \Hom_{\Pi}(e_i\Pi,(1-e_i)\Pi) $. Then one can apply the same argument of Lemma~$2.9$ in~\cite{M} to obtain the result.
\end{proof}

\begin{proposition}
Assume that $T \in \langle I_1, \cdots , I_n\rangle $ is a basic support $\tau$-tilting
ideal of $\Pi$. Then $I_iT$ is a basic support $\tau$-tilting $\Pi$-module.
\end{proposition}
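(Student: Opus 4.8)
The plan is to distinguish the two cases $I_iT=T$ and $I_iT\neq T$. If $I_iT=T$ there is nothing to prove, since $T$ is a basic support $\tau$-tilting ideal by hypothesis. All the content is in the case $I_iT\neq T$, where I would exhibit $I_iT$ as a left mutation of $T$ and then quote the mutation theorem of \cite{AIR} to conclude that it is basic support $\tau$-tilting.

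First I would record the right-module decomposition of $I_iT$. Since $e_jI_i=e_j\Pi$ for $j\neq i$, we get $e_jI_iT=e_jT$, so $I_iT=e_iI_iT\oplus(1-e_i)T$ with $(1-e_i)T=\bigoplus_{j\neq i}e_jT$; thus the complement $(1-e_i)T$ of the summand $e_iT$ of $T$ is preserved and only $e_iT$ is altered. By the remark preceding Lemma~\ref{has left mutation}, the hypothesis $I_iT\neq T$ forces $e_iT\neq 0$, hence $e_iT$ is indecomposable by Lemma~\ref{indec}, and likewise $e_iI_iT$ is indecomposable or zero. Moreover Lemma~\ref{has left mutation} gives $e_iT\notin\opname{Fac}((1-e_i)T)$, so $T$ admits the left mutation $\mu^-_{e_iT}(T)$.

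Next I would compute this mutation via Lemma~\ref{mutaion} with $X=e_iT$ and complement $(1-e_i)T$. By Lemma~\ref{left approximation}, the map $a\otimes_\Pi T\colon e_iT\to\bigoplus_{j\in\overline{\Omega}(i,-)}(e_jT)^{|c_{ji}|}$ is a left $\add((1-e_i)T)$-approximation, and tensoring the exact sequence (\ref{eiIi}) on the right with $T$ shows that its cokernel is $e_iI_i\otimes_\Pi T$, which in turn surjects onto $e_iI_iT$ by multiplication. Extracting a minimal left approximation $f$ from $a\otimes_\Pi T$ and feeding it into Lemma~\ref{mutaion}, I expect the indecomposable summand $Y'$ of the resulting cokernel to be isomorphic to $e_iI_iT$, so that $\mu^-_{e_iT}(T)=e_iI_iT\oplus(1-e_i)T=I_iT$, the degenerate case $Y=0$ corresponding exactly to $e_iI_iT=0$ and $I_iT=(1-e_i)T$. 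The mutation theorem of \cite{AIR} then yields that $I_iT$ is a basic support $\tau$-tilting module, with basicness and the summand count handed to us for free.

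The hard part will be this last identification. Passing from the (a priori non-minimal) approximation $a\otimes_\Pi T$ to the genuine minimal one $f$, I must control the summands split off from the target $\bigoplus_{j}(e_jT)^{|c_{ji}|}$, which lie in $\add((1-e_i)T)$, so that the decomposition $e_iI_i\otimes_\Pi T\cong Y\oplus V$ with $V\in\add((1-e_i)T)$ is reconciled with the surjection $e_iI_i\otimes_\Pi T\twoheadrightarrow e_iI_iT$, pinning down $Y'\cong e_iI_iT$ on the nose. It is here that the hypotheses are used: $T\in\langle I_1,\dots,I_n\rangle$ being a genuine basic support $\tau$-tilting ideal is precisely what licenses Lemmas~\ref{has left mutation} and~\ref{left approximation}, after which the properties of $\tau$-rigid pairs from \cite{AIR} make the matching purely module-theoretic.
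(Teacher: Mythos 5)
Your architecture is exactly the paper's --- the same case split, the same decomposition $I_iT=e_iI_iT\oplus(1-e_i)T$, and the same chain Lemma~\ref{has left mutation} $\to$ Lemma~\ref{left approximation} $\to$ Lemma~\ref{mutaion} --- but the step you defer as ``the hard part'' is a genuine gap, and the data you propose to close it with are insufficient. You only record the multiplication surjection $e_iI_i\otimes_\Pi T\twoheadrightarrow e_iI_iT$; a surjection from the cokernel of $a\otimes_\Pi T$ onto $e_iI_iT$ says nothing about direct-summand structure, so no amount of ``reconciling'' it with a decomposition $Y\oplus V$, $V\in\add((1-e_i)T)$, can pin down $Y'\cong e_iI_iT$. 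What the paper uses instead is the left-module version of Lemma~\ref{TI_i}: since $I_iT\neq T$ (your standing assumption in the nontrivial case), $I_i\otimes_\Pi T\cong I_iT$, hence $e_iI_i\otimes_\Pi T\cong e_iI_iT$ on the nose. This isomorphism is the missing ingredient, and it is where the Tor-vanishing of Lemma~\ref{TotimesEi}, available because $T$ is support $\tau$-tilting, actually enters your argument.

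Your second worry --- having to extract a minimal approximation from $a\otimes_\Pi T$ --- evaporates once you argue, as the paper does, that $a\otimes_\Pi T$ is \emph{already} minimal. Suppose $e_iI_iT\neq 0$; it is indecomposable by Lemma~\ref{indec} (you noted this, since $I_iT$ is again a two-sided ideal), and if $a\otimes_\Pi T$ failed to be left minimal, an indecomposable summand $e_jT$ with $j\neq i$ of the target would split off as a direct summand of the cokernel, forcing $e_iI_iT\cong e_jT$. This is impossible: $e_iI_iT\subseteq e_i\Pi$ has socle $S_{\sigma(i)}$ while $e_jT\subseteq e_j\Pi$ has socle $S_{\sigma(j)}$, and $\sigma(i)\neq\sigma(j)$. (If $e_iI_iT=0$, minimality is clear and Lemma~\ref{mutaion}(1) gives $\mu^-_{e_iT}(T)=(1-e_i)T=I_iT$.) With minimality and the isomorphism from Lemma~\ref{TI_i} in hand, Lemma~\ref{mutaion}(2) applies with $m=1$ and $Y'=e_iI_iT$, yielding $\mu^-_{e_iT}(T)=e_iI_iT\oplus(1-e_i)T=I_iT$, which is therefore basic support $\tau$-tilting. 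So your route is the right one, but these two concrete ingredients --- the isomorphism $e_iI_i\otimes_\Pi T\cong e_iI_iT$ and the socle comparison for minimality --- are the actual content of the proof, not generalities about $\tau$-rigid pairs.
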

\begin{proof} 
There is nothing to prove if $I_iT=T$ and we assume that $I_iT\neq T$ in the following.
By Lemma \ref{has left mutation}, $T$ has a left mutation
$\mu^-_{e_iT}(T).$ Now let $e$ be an idempotent of $\Pi$ such that $T$ is a $\tau$-tilting $(\Pi/{\langle e\rangle})$ module.
Applying $-\otimes_{\Pi}T $ to the exact sequence (\ref{eiIi}), 
 we  get an exact sequence$$e_i\Pi\otimes_{\Pi}T\xrightarrow{a\otimes_{\Pi}T} \bigoplus_{j\in \overline{\Omega}(i,-)}(e_j\Pi)^{|c_{ji}|}\otimes_{\Pi}T\to e_iI_i\otimes_{\Pi}T\to 0.$$
and  $a\otimes_{\Pi}T$ is a left $\add ((1-e_i)T)$-approximation by Lemma \ref{left approximation}.
On the other hand, 
by  the left module version of Lemma \ref{TI_i}, we have $e_iI_i\otimes_{\Pi}T=e_iI_i T$.  If $e_iI_i T=0$, then it is clear that $a\otimes_{\Pi}T$ is a minimal left $\add ((1-e_i)T)$- approximation.
Now assume that $e_iI_i T\neq 0$. Since  $e_jT$ and $e_iI_iT$ have different socles  for $j\neq i$, then $e_jT$ and $e_iI_iT$ are non isomorphic  for $j\neq i$.
Hence the map $a\otimes_{\Pi}T$ is also a minimal left $\add ((1-e_i)T)$-approximation.
Consequently,  $\mu_{e_iT}^-(T)= e_iI_iT\oplus (1-e_i)T=I_iT$ is a $\tau$-tilting $(\Pi/{\langle e\rangle})$ module by Lemma \ref{mutaion} and $I_iT$ is a basic support $\tau$-tilting $\Pi$-module.
\end{proof}
 In particular, we have proved the following result.
 \begin{theorem}
Each  $T\in \langle I_1,\cdots,I_n\rangle$ is a basic support $\tau$-tilting modules. \end{theorem}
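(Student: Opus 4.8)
The plan is to prove the statement by induction on the rank length of the locally free module $\Pi/T$, using the preceding proposition as the inductive engine. The theorem asserts that every $T \in \langle I_1, \cdots, I_n\rangle$ is a basic support $\tau$-tilting module. By definition of the semigroup, such a $T$ can be written as a product $T = I_{i_1}I_{i_2}\cdots I_{i_k}$, so I would induct on the number $k$ of factors in a (minimal) expression for $T$.

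\medskip

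\textbf{Base case.} For $k = 0$ we have $T = \Pi$, which is a $\tau$-tilting module (every projective generator is support $\tau$-tilting, trivially $\tau$-rigid with $|\Pi| = |\Pi|$). For $k = 1$ we have $T = I_i$, and the immediately preceding result already establishes that each $I_i$ is a basic support $\tau$-tilting ideal. This anchors the induction.

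\medskip

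\textbf{Inductive step.} Suppose $T' = I_{i_2}\cdots I_{i_k}$ is a basic support $\tau$-tilting $\Pi$-module and set $T = I_{i_1}T'$. Since $T' \in \langle I_1, \cdots, I_n\rangle$ is a basic support $\tau$-tilting ideal, the preceding proposition applies verbatim with $i = i_1$: it asserts precisely that $I_{i_1}T'$ is again a basic support $\tau$-tilting $\Pi$-module. Thus $T = I_{i_1}T'$ is basic support $\tau$-tilting, completing the induction. The content of the theorem is therefore nothing more than iterating the proposition, one ideal factor at a time, starting from $\Pi$.

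\medskip

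\textbf{Main obstacle.} The genuine work has already been absorbed into the preceding proposition, whose proof relies on Lemma~\ref{has left mutation} (producing a left mutation), Lemma~\ref{left approximation} (identifying $a\otimes_{\Pi}T$ as a minimal left $\add((1-e_i)T)$-approximation), and the mutation calculation via Lemma~\ref{mutaion}. So at the level of this theorem there is essentially no obstacle beyond observing that the hypothesis of the proposition (``$T \in \langle I_1, \cdots, I_n\rangle$ is a basic support $\tau$-tilting ideal'') is exactly the inductive hypothesis, and that the conclusion ($I_iT$ is basic support $\tau$-tilting) feeds back into the same class so the induction closes. The only point requiring a moment's care is the well-definedness of the induction: a product of the $I_j$'s \emph{is} an ideal of $\Pi$ (so that ``$T$ is an ideal'' holds at each stage, as needed to invoke the proposition), and the class of basic support $\tau$-tilting ideals is closed under left multiplication by any $I_i$, which is exactly what the proposition guarantees.
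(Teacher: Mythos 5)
Your proof is correct and is essentially the paper's own argument: the paper states the theorem as an immediate consequence (``In particular, we have proved the following result'') of the preceding proposition, which is precisely the induction on the number of factors that you spell out, peeling off one ideal $I_{i_1}$ at a time starting from $\Pi$. Your added remark that products of the $I_j$'s remain ideals, so the proposition's hypothesis is preserved at each stage, is the only step the paper leaves implicit, and you handle it correctly.
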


Let $W=W(C)$ be  the Weyl group associated to the symmetrizable Cartan matrix $C$. By Theorem \ref{bijection w and I nondynkin},  for each reduced expression of $w\in W$, we have a support $\tau$-tilting module $I_w$.
Let $(I_w,P_w)$ be the corresponding support $\tau$-tilting pair, where $P_w$ is a projective $\Pi$-module, then we have
\begin{proposition}
Keep notations as above, then \[P_w=\bigoplus_{i\in Q_0, e_iI_w=0}e_{\sigma(i)}\Pi,\] where $\sigma:Q_0\ra Q_0$ is the Nakayama permutation of $\Pi$.
\end{proposition}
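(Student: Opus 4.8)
The plan is to pin down $P_w$ through its defining property. By \cite[Prop 2.3]{AIR} the projective part of a basic support $\tau$-tilting pair is uniquely determined by its module part, so it suffices to exhibit a projective $P$ making $(I_w,P)$ a $\tau$-rigid pair with $|I_w|+|P|=|\Pi|=n$, and then $P\cong P_w$. As $I_w$ is already $\tau$-rigid, the only condition to arrange is $\Hom_\Pi(P,I_w)=0$. Using $\Hom_\Pi(e_k\Pi,I_w)\cong I_we_k$, an indecomposable projective $e_k\Pi$ can occur in $P_w$ exactly when $I_we_k=0$; thus $P_w$ is built from the projectives $e_k\Pi$ with $I_we_k=0$, and the whole task is to match these "vanishing columns" with the "vanishing rows" $e_iI_w=0$ of the statement.

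First I would record the two counts. As a right module $I_w=\bigoplus_i e_iI_w$, and by Lemma~\ref{indec} together with the remark that $e_iI_w$ and $e_jI_w$ are non-isomorphic when both are nonzero, this is a decomposition into pairwise non-isomorphic indecomposables; hence $|I_w|=\#\{i:e_iI_w\neq 0\}$ and $|P_w|=n-|I_w|=\#\{i:e_iI_w=0\}$. The theorem will then follow from the symmetry
\[
e_iI_w=0\iff I_we_{\sigma(i)}=0,\qquad(\star)
\]
because $(\star)$ makes $i\mapsto\sigma(i)$ a bijection from $\{i:e_iI_w=0\}$ onto $\{k:I_we_k=0\}$, so the two counts agree, $P_w$ must exhaust \emph{all} such projectives, and rewriting the index set via $(\star)$ yields precisely $\bigoplus_{e_iI_w=0}e_{\sigma(i)}\Pi$.

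The heart of the argument, and the step I expect to be the main obstacle, is $(\star)$, which I would deduce from the self-injective structure of $\Pi$ (Lemma~\ref{selfinjective}). Since $e_i\Pi$ has essential simple socle $\opname{soc}(e_i\Pi)=S_{\sigma(i)}$, the submodule $e_iI_w\subseteq e_i\Pi$ is nonzero iff it contains this socle, i.e. iff $\opname{soc}(e_i\Pi)\subseteq I_w$; dually the left submodule $I_we_{\sigma(i)}\subseteq\Pi e_{\sigma(i)}$ is nonzero iff $\opname{soc}(\Pi e_{\sigma(i)})\subseteq I_w$. Thus $(\star)$ reduces to the internal claim that these two one-dimensional subspaces of $e_i\Pi e_{\sigma(i)}$ coincide, namely $\opname{soc}(e_i\Pi)=\opname{soc}(\Pi e_{\sigma(i)})$.

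To establish this I would use that a basic self-injective finite-dimensional algebra is Frobenius, so $\Pi$ carries a nondegenerate associative form whose left and right socles coincide as a single two-sided ideal $\opname{soc}(\Pi)$. Comparing the idempotent component $e_i\opname{soc}(\Pi)e_{\sigma(i)}$ computed from the right decomposition $\opname{soc}(\Pi)=\bigoplus_l\opname{soc}(e_l\Pi)$ with the one computed from the left decomposition $\opname{soc}(\Pi)=\bigoplus_j\opname{soc}(\Pi e_j)$ (where $\opname{soc}(\Pi e_j)=S_{\sigma^{-1}(j)}$ as a left module, by the same duality $e_i\Pi\cong\mathbb{D}(\Pi e_{\sigma(i)})$ used in Proposition~\ref{ci=csigmai}) gives $\opname{soc}(e_i\Pi)=e_i\opname{soc}(\Pi)e_{\sigma(i)}=\opname{soc}(\Pi e_{\sigma(i)})$. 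The only delicate point is justifying that the left and right socles of $\Pi$ genuinely agree as subspaces; this is exactly where the Frobenius viewpoint is needed, as it follows from $J^{\perp}={}^{\perp}J$ for the radical $J$ via the Nakayama automorphism. With $(\star)$ in hand, the counting above and the uniqueness of $P_w$ finish the proof at once.
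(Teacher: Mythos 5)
Your proof is correct, and its skeleton matches the paper's: everything reduces to the equivalence $e_iI_w=0\iff I_we_{\sigma(i)}=0$, proved by socle considerations in the self-injective algebra $\Pi$. Where you diverge is in the mechanism for this equivalence and in how you certify maximality of $P_w$. The paper argues each direction by a direct contradiction: if $e_iI_w=0$ but $I_we_{\sigma(i)}\neq 0$, then the latter, being a nonzero left submodule of $\Pi e_{\sigma(i)}$, has socle $S_i$, yielding a nonzero composition $\Pi e_i\twoheadrightarrow S_i\rightarrowtail I_we_{\sigma(i)}$, which contradicts $\Hom_{\Pi}(\Pi e_i, I_we_{\sigma(i)})=e_iI_we_{\sigma(i)}=0$ (and symmetrically for the converse); in effect both vanishing conditions are detected by $e_iI_we_{\sigma(i)}$, and only the simplicity of the socles of indecomposable projectives (as in Lemma~\ref{indec}) together with $\opname{soc}(\Pi e_{\sigma(i)})\cong S_i$ from the duality $e_i\Pi\cong\mathbb{D}(\Pi e_{\sigma(i)})$ is needed. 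You instead identify $\opname{soc}(e_i\Pi)$ and $\opname{soc}(\Pi e_{\sigma(i)})$ as literally the same subspace $e_i\opname{soc}(\Pi)e_{\sigma(i)}$, which requires the additional quasi-Frobenius fact that the left and right socles of a finite-dimensional self-injective algebra coincide as a two-sided ideal; this is a standard but external input, and it buys you a symmetric, one-shot proof of the equivalence where the paper gets by with more elementary Hom-space computations. Your second addition is also sound: you make the maximality of $P_w$ explicit through the count $\lvert I_w\rvert=\#\{i: e_iI_w\neq 0\}$ (using Lemma~\ref{indec} and the pairwise non-isomorphy of the $e_iI_w$) combined with $\lvert I_w\rvert+\lvert P_w\rvert=n$ and the uniqueness of the projective part from \cite[Prop 2.3]{AIR}, whereas the paper leaves this step implicit, relying on the general bound $\lvert M\rvert+\lvert P\rvert\leq n$ for $\tau$-rigid pairs to ensure that every $e_k\Pi$ with $I_we_k=0$ must occur in $P_w$. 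Both routes are complete; yours is marginally heavier in prerequisites but more transparent about why $P_w$ exhausts all Hom-orthogonal projectives.
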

\begin{proof}
Suppose $e_iI_w=0$ for  $i\in Q_0$, we need to show $\Hom_{\Pi}(e_{\sigma(i)}\Pi, I_w)=I_we_{\sigma(i)}=0$.  Since $I_w$ is an ideal,  $I_we_{\sigma(i)}$ is a left $\Pi$-module.
If  $I_we_{\sigma(i)}\neq 0$, then  $\operatorname{soc} (I_we_{\sigma(i)})=\operatorname{soc} (\Pi e_{\sigma(i)})= S_i$. Consequently,
there is a nonzero morphism $\Pi e_i\twoheadrightarrow S_i\rightarrowtail I_we_{\sigma(i)}.$
However, $\Hom_{\Pi}(\Pi e_i, I_we_{\sigma(i)})= e_iI_we_{\sigma(i)}=0$,  a contradiction. Hence $\Hom_{\Pi}(e_{\sigma(i)}\Pi, I_w)=0$ and $e_{\sigma(i)}\Pi$ is a direct summand of $P_w$.

On the other hand, suppose we have  $\Hom_{\Pi}(e_{\sigma(i)}\Pi, I_w)=I_we_{\sigma(i)}= 0$, using similar argument, one can get $e_iI_w=0$. This completes the proof. 
\end{proof}

\begin{theorem}\label{mutation of tau tilting}
Keep notations as above, for each $i\in Q_0$,  the support $\tau$-tilting pairs $(I_w,P_w)$ and $(I_{s_iw},P_{s_iw})$ are  related by a left or right mutation.
\end{theorem}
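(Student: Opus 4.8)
The plan is to deduce this directly from the proposition immediately preceding it, which shows that multiplying a basic support $\tau$-tilting ideal on the left by $I_i$ realizes a left mutation. The key preliminary observation is purely combinatorial: since $s_i$ is a simple reflection, we always have $\ell(s_iw)=\ell(w)\pm 1$, where $\ell$ denotes the length function of the Coxeter group $W$. Hence exactly one of two situations occurs, and I would treat them symmetrically.

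First suppose $\ell(s_iw)=\ell(w)+1$. Choosing a reduced expression $w=s_{i_1}\cdots s_{i_k}$ and prepending $s_i$ yields a reduced expression $s_iw=s_is_{i_1}\cdots s_{i_k}$, so Theorem~\ref{bijection w and I nondynkin} gives $I_{s_iw}=I_iI_w$. Dually, if $\ell(s_iw)=\ell(w)-1$, then applying the same remark to $w=s_i(s_iw)$ gives $I_w=I_iI_{s_iw}$. In both cases the two ideals differ by left multiplication by $I_i$, and they are genuinely distinct: the map $\psi$ of Theorem~\ref{bijection w and I nondynkin} is a bijection and $s_iw\neq w$, so $I_{s_iw}\neq I_w$. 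In particular $I_iI_w\neq I_w$ in the first case and $I_iI_{s_iw}\neq I_{s_iw}$ in the second.

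Next I would invoke the preceding proposition together with its proof. There it is established that for a basic support $\tau$-tilting ideal $T\in\langle I_1,\cdots,I_n\rangle$ with $I_iT\neq T$, one has $\mu^-_{e_iT}(T)=e_iI_iT\oplus(1-e_i)T=I_iT$; that is, $I_iT$ is precisely the left mutation of $T$ at the indecomposable summand $e_iT$ (the mutation exists by Lemma~\ref{has left mutation}, and it is computed through the approximation of Lemma~\ref{mutaion}). Applying this in the first case with $T=I_w$ shows that $(I_{s_iw},P_{s_iw})$ is the left mutation of $(I_w,P_w)$; applying it in the second case with $T=I_{s_iw}$ shows that $(I_w,P_w)$ is the left mutation of $(I_{s_iw},P_{s_iw})$, i.e. $(I_{s_iw},P_{s_iw})$ is a right mutation of $(I_w,P_w)$. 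Either way, the two support $\tau$-tilting pairs are related by a left or right mutation.

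Because the substantive work has already been carried out in that proposition, I do not expect a genuine obstacle here: the only points requiring care are the bookkeeping with reduced expressions (to express the passage from $w$ to $s_iw$ as left multiplication by $I_i$) and keeping track of the direction of the mutation. The degenerate possibility $e_iI_iT=0$ — for instance when $n=1$ and $I_i=0$ — needs no separate treatment, since it is already subsumed in the cited proposition, where it corresponds to the case $Y=0$ of Lemma~\ref{mutaion}.
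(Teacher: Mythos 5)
Your proof is correct, and it reaches the conclusion by a genuinely different (if closely related) route than the paper. Both arguments start from the same two facts: $l(s_iw)=l(w)\pm 1$, and, via Theorem~\ref{bijection w and I nondynkin}, $I_{s_iw}=I_iI_w$ when $l(s_iw)>l(w)$ together with $I_{s_iw}\neq I_w$ from injectivity of $\psi$; and both reduce the case $l(s_iw)<l(w)$ to this one by the substitution $u=s_iw$. Where you diverge is in how the mutation is recognized. The paper does not reuse the formula $\mu^-_{e_iT}(T)=I_iT$: instead it decomposes both pairs explicitly, checks that $e_iI_w\neq 0$ (from $I_iI_w\not\cong I_w$) and that $e_jI_iI_w=e_jI_w$ for $j\neq i$ (so the projective parts agree up to the summand $e_{\sigma(i)}\Pi$, which enters exactly when $e_iI_iI_w=0$), and concludes that $((1-e_i)I_w,P_w)$ is a common almost complete support $\tau$-tilting pair of $(I_w,P_w)$ and $(I_{s_iw},P_{s_iw})$; the exchange theorem of Adachi--Iyama--Reiten quoted at the start of Section~\ref{S:Dynkin-type} then yields that the two completions of an almost complete pair are related by a mutation. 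Your route, extracting $\mu^-_{e_iI_w}(I_w)=I_iI_w$ from the proof of the proposition immediately preceding the theorem, is legitimate --- $I_w$ and $I_{s_iw}$ lie in $\langle I_1,\cdots,I_n\rangle$ and are basic support $\tau$-tilting by the theorem just before, so the hypotheses of that proposition are met --- and it buys strictly more than the paper's version: it identifies the direction of the mutation, namely that passing from $w$ to $s_iw$ with $l(s_iw)>l(w)$ is a left mutation, which is exactly the refinement needed for the poset compatibility alluded to in the introduction. What the paper's version buys in exchange is that it relies only on stated results (the exchange theorem) rather than on a formula established inside another proof, and it records explicitly how the projective part changes, $P_{s_iw}=P_w\oplus e_{\sigma(i)}\Pi$ when $e_iI_iI_w=0$ and $P_{s_iw}=P_w$ otherwise. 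One small slip in your parenthetical: the approximation statement is Lemma~\ref{left approximation} (with minimality supplied by the socle argument in the proposition's proof), whereas Lemma~\ref{mutaion} is what computes the mutation from that approximation; this does not affect the argument.
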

\begin{proof}
We need to show that the support $\tau$-tilting pairs $(I_w,P_w)$ and $(I_{s_iw},P_{s_iw})$ have a common almost complete support $\tau$-tilting pair.
According to Theorem~\ref{bijection w and I nondynkin}, $I_{s_iw}$ and $I_w$ are not isomorphic.  Suppose $l(s_iw)>l(w)$, we have $I_{s_iw}=I_iI_w$ and
\[I_{s_iw}= e_iI_iI_w\oplus(1-e_i)I_iI_w=e_iI_iI_w\oplus(1-e_i)I_w,~~ P_{s_iw}=\bigoplus_{j\in Q_0,e_jI_iI_w=0}e_{\sigma(j)}\Pi.\]
Note that $I_w= e_iI_w\oplus(1-e_i)I_w $ and $ P_w=\bigoplus\limits_{j\in Q_0, e_jI_w=0}e_{\sigma(j)}\Pi.$  By $I_iI_w\not\cong I_w$, it is not hard to see that $e_iI_w\neq 0$.
On the other hand,  for $j\neq i$, we clearly have $e_jI_iI_w=e_jI_w$ and hence $e_jI_iI_w=0$ if and only if $ e_jI_w=0.$  Then we obtain 
\[(I_{s_iw},P_{s_iw})=\begin{cases}((1-e_i)I_w,P_w\oplus e_{\sigma(i)}\Pi)& \text{if $ e_iI_iI_w=0$;}\\(e_iI_iI_w\oplus (1-e_i)I_w,P_w) &\text{if $e_iI_iI_w\neq 0$.}\end{cases}\]
 In particular, $(I_w,P_w)$ and $(I_{s_iw},P_{s_iw})$ have a common almost complete support $\tau$-tilting pair $((1-e_i)I_w, P_w)$ as a direct summand in both cases. 
 
If $l(s_iw)<l(w)$, one applies the same argument to $u:=s_iw$ which satisfies $l(s_iu)>l(u)$.
\end{proof}

\begin{theorem}~\label{t:support-tau-tilting}
Each basic support $\tau$-tilting $\Pi$-module is isomorphic to an object of $\langle I_1,I_2,\cdots, I_n\rangle.$ 
\end{theorem}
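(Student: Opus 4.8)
The plan is to realize the finite set $\langle I_1,I_2,\cdots,I_n\rangle$ as a full connected component of the support $\tau$-tilting quiver $\mathcal{H}(\opname{s\tau-tilt}\Pi)$ and then invoke Lemma~\ref{connect}. Write $S=\{I_w\mid w\in W\}$. We have already seen that each $I_w$ is a basic support $\tau$-tilting $\Pi$-module, so $S$ is a set of vertices of $\mathcal{H}(\opname{s\tau-tilt}\Pi)$; by the bijection $\psi$ of Theorem~\ref{bijection w and I nondynkin} we have $S=\langle I_1,I_2,\cdots,I_n\rangle$ and $|S|=|W|$, which is finite since $C$ is of Dynkin type. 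Thus it suffices to prove that $S$ is a connected component, for then Lemma~\ref{connect} forces $\mathcal{H}(\opname{s\tau-tilt}\Pi)=S$; as the vertices of this quiver are exactly the basic support $\tau$-tilting $\Pi$-modules, every such module is then isomorphic to some $I_w$, which is the assertion.

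Connectedness of $S$ is immediate from the generation of $W$ by the $s_i$. Given $w,w'\in W$, write $w'=s_{i_k}\cdots s_{i_1}w$; then each consecutive pair in the list $I_w,\,I_{s_{i_1}w},\,I_{s_{i_2}s_{i_1}w},\,\ldots,\,I_{w'}$ is related by a left or right mutation according to Theorem~\ref{mutation of tau tilting}, so these ideals lie in a single connected component of $\mathcal{H}(\opname{s\tau-tilt}\Pi)$.

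The key step is to show that $S$ is closed under mutation, i.e.\ that $S$ contains every neighbour of each of its vertices. Fix $w\in W$ and consider the support $\tau$-tilting pair $(I_w,P_w)$ with $P_w=\bigoplus_{e_jI_w=0}e_{\sigma(j)}\Pi$. Its indecomposable summands are precisely the nonzero modules $e_iI_w$ (summands of $I_w$, each indecomposable by Lemma~\ref{indec}) together with the modules $e_{\sigma(j)}\Pi$ for those $j$ with $e_jI_w=0$ (summands of $P_w$); altogether these are indexed bijectively by $Q_0$. A neighbour of $(I_w,P_w)$ arises by deleting one such summand to obtain an almost complete support $\tau$-tilting pair and passing to its second completion, which is unique by the two-completion property of~\cite{AIR}. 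I would now match each index $i\in Q_0$ with the mutation furnished by Theorem~\ref{mutation of tau tilting}: when $l(s_iw)>l(w)$ one has $e_iI_w\neq 0$, the pairs $(I_w,P_w)$ and $(I_{s_iw},P_{s_iw})$ share the almost complete pair $((1-e_i)I_w,P_w)$, and hence the mutation at the summand $e_iI_w$ is exactly $I_{s_iw}\in S$; when $l(s_iw)<l(w)$, applying the same analysis to $u=s_iw$ shows that the mutation at the summand indexed by $i$ (namely $e_iI_w$ if it is nonzero, and otherwise the summand $e_{\sigma(i)}\Pi$ of $P_w$) again returns $I_{s_iw}\in S$. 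As these exhaust all $n$ summands of $(I_w,P_w)$, every neighbour of $(I_w,P_w)$ lies in $S$.

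Consequently $S$ is a finite, connected, mutation-closed set of vertices, hence a connected component of $\mathcal{H}(\opname{s\tau-tilt}\Pi)$, and Lemma~\ref{connect} completes the argument. I expect the main obstacle to be the bookkeeping in the closure step: one must verify that the $n$ mutation directions at $(I_w,P_w)$ correspond exactly to the $n$ elements $s_iw$ and that distinct directions yield distinct neighbours. Here the injectivity of $\psi$ (Theorem~\ref{bijection w and I nondynkin}) is precisely what guarantees that the $n$ ideals $I_{s_iw}$ are pairwise distinct, so that they account for every edge leaving $(I_w,P_w)$ rather than merely some of them.
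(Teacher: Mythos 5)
Your proposal is correct and follows essentially the same route as the paper's proof: both realize $\{I_w \mid w\in W\}$ as a finite connected component of $\mathcal{H}(\opname{s\tau\text{-}tilt}\Pi)$ using Theorem~\ref{mutation of tau tilting} (connectedness via the generators $s_i$, closure under mutation by matching each of the $n$ summands of $(I_w,P_w)$ with a neighbour $I_{s_iw}$) and then conclude with Lemma~\ref{connect}. Your write-up merely makes explicit the bookkeeping (which summand is mutated in each case, and the role of the injectivity of $\psi$) that the paper leaves as a sketch following Mizuno.
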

\begin{proof}
The same argument of~\cite[Thm 2.19]{M} applied. Here we provide a sketch of  the proof.

Let $\mathcal{C}$ be the connected component of $\mathcal{H}(\opname{s\tau-tilt}\Pi)$ containing $\Pi$.  By induction on the length of the element of the Weyl group $W$, it is easy to get  that $I_w$   belongs to $\mathcal{C}$ for any reduced word $w\in W$.  Then by Theorem \ref{mutation of tau tilting}, each neighbor of $I_w$ has the form $I_{s_iw}$ for some $i$,  we conclude  that  the set $\{I_w~|~w\in {W}\}$ forms the  vertices of $\mathcal{C}$ and $\mathcal{C}$ is a finite connected component of $\mathcal{H}(\opname{s\tau-tilt}\Pi)$ . Then by Lemma \ref{connect}, we deduce that $\mathcal{H}(\opname{s\tau-tilt}\Pi)=\mathcal{C}$.
\end{proof}
\begin{remark}
An immediately consequence of ~Theorem~\ref{t:support-tau-tilting} is that each indecomposable $\tau$-rigid module has the form $e_iI_w$ for some $i\in Q_0$ and some reduced word $w\in W$. On the other hand, for any $i\in Q_0$ and any reduced word $w\in W$, if $e_iI_w\neq 0$,  then  $e_iI_w$ is a nonzero indecomposable $\tau$-rigid module.
\end{remark}

\begin{example}
Let $\Pi=\Pi(C,D)$ be the preprojective algebra in Example \ref{eg1}. In this case, $\mathcal{H}(\opname{s\tau-tilt}\Pi)$ is as follows:
\[\xymatrix{
&E_2\oplus e_2\Pi\ar[rr]^{I_2}&&E_2\ar[dr]^{I_1}&\\
\Pi\ar[ur]^{I_1}\ar[dr]_{I_2}&&&&0\\
&E_1\oplus e_1\Pi\ar[rr]_{I_1}&&E_1\ar[ur]_{I_2}&
}\]
 and all the basic support $\tau$-tilting  pairs are 
 \[\{(\Pi,0), (E_2\oplus e_2\Pi,0),(E_1\oplus e_1\Pi,0), (E_1,e_2\Pi),(E_2,e_1\Pi),(0,\Pi)\},\]
 where  \[\xymatrix@=8pt{e_1\Pi={\small\begin{array}{c}1\\1\ \ 2\\2\end{array} },e_2\Pi={\small\begin{array}{c}2\\1\  \ 2\\1\end{array} },
E_1={\small\begin{array}{c}1\\1\end{array} }, E_2={\small\begin{array}{c}2\\2\end{array} .}
  }\]
  We also know that all the nonzero indecomposable $\tau$-rigid $\Pi$-modules  are  $\{e_1\Pi, e_2\Pi, E_1,E_2\}.$
\end{example}

\begin{example} 
Let $\Pi=\Pi(C,D)$ be the preprojective algebra in Example \ref{eg2}. 
  In this case, $\mathcal{H}(\opname{s\tau-tilt}\Pi)$ is as follows:
 \[\xymatrix{
&e_1I_1\oplus e_2\Pi\ar[rr]^{I_2}&&e_1I_1\oplus E_2\ar[rr]^{I_1}&&E_2\ar[dr]^{I_2}&\\
\Pi\ar[ur]^{I_1}\ar[dr]_{I_2}&&&&&&0\\
&e_2I_2\oplus e_1\Pi\ar[rr]_{I_1}&& e_2I_2\oplus E_1\ar[rr]_{I_2}&&E_1\ar[ur]_{I_1}&
}\]
 and all the basic support $\tau$-tilting pairs are 
 \[\{(\Pi,0), (e_1I_1\oplus e_2\Pi,0),(e_2I_2\oplus e_1\Pi,0),(E_1\oplus e_2I_2,0),(e_1I_1\oplus E_2,0), (E_1,e_2\Pi),(E_2,e_1\Pi),(0,\Pi)\},\]
 where \[ \xymatrix@=8pt{e_1\Pi={\small\begin{array}{c}1\\1\ \ 2\\2\ \ 1\\1\end{array} },e_2\Pi={\small\begin{array}{c}2\\1\\1\\2\end{array} },
 e_1I_1={\small\begin{array}{c}1\ \ 2\\2\ \ 1\\1\end{array} },e_2I_2={\small\begin{array}{c}1\\1\\2\end{array} }, E_1={\small\begin{array}{c}1\\1\end{array} }, E_2={\small\begin{array}{c}2.\end{array} }
  }\]
 All the nonzero indecomposable $\tau$-rigid $\Pi$-modules  are  $\{e_1\Pi, e_2\Pi, e_1I_1,e_2I_2,E_1,E_2\}.$

\end{example}

\end{document}